\newcounter{lemma}[section]
\newcounter{corollary}[section]
\newcounter{remark}[section]
\newcounter{theorem}[section]
\newcounter{proposition}[section]
\numberwithin{equation}{section}
\def\Xint#1{\mathchoice
   {\XXint\displaystyle\textstyle{#1}}%
   {\XXint\textstyle\scriptstyle{#1}}%
   {\XXint\scriptstyle\scriptscriptstyle{#1}}%
   {\XXint\scriptscriptstyle\scriptscriptstyle{#1}}%
   \!\int}
\def\XXint#1#2#3{{\setbox0=\hbox{$#1{#2#3}{\int}$}
     \vcenter{\hbox{$#2#3$}}\kern-.5\wd0}}
\def\dashint{\Xint-}
\def\cc{\setcounter{equation}{0}
\setcounter{figure}{0}\setcounter{table}{0}}
\begin{document}

\markboth{\centerline{V. RYAZANOV AND E. SEVOST'YANOV}}
{\centerline{ON CONVERGENCE AND COMPACTNESS OF SPACE
HOMEOMORPHISMS}}

\author{{V. RYAZANOV AND E. SEVOST'YANOV}\\}

\title{
{\bf ON CONVERGENCE AND COMPACTNESS \\ OF SPACE HOMEOMORPHISMS}}

\maketitle

\large \begin{abstract} Various theorems on convergence of general
space homeomorphisms are proved and, on this basis, theorems on
convergence and compactness for classes of the so-called ring
$Q$--ho\-me\-o\-mor\-phisms are obtained. In particular, it was
established by us that a family of all ring
$Q$--ho\-me\-o\-mor\-phisms $f$ in ${\Bbb R}^n$ fixing two points is
compact provided that the function $Q$ is of finite mean
oscillation. These results will have wide applications to Sobolev's
mappings.
\end{abstract}

\bigskip
{\bf 2010 Mathematics Subject Classification: Primary 30C65;
Se\-con\-da\-ry 30C62}

\large \cc
\section{Introduction}
We give here the foundations of the convergence theory for general
ho\-me\-o\-mor\-phisms in space and then develope the compactness
theory for the so--called $Q$--ho\-me\-o\-mor\-phisms. The ring
$Q$--ho\-me\-o\-mor\-phisms have been introduced first in the plane
in the connection with the study of the degenerate Beltrami
equations, see e.g. the papers \cite{RSY$_1$}--\cite{RSY$_5$} and
the monographs \cite{GRSY} and \cite{MRSY}. The theory of ring
$Q$--homeo\-mor\-phisms is applicable to various classes of mappings
with finite distortion intensively investigated in many recent
works, see e.g. \cite{MRSY} and \cite{KRSS} and further references
therein. The present paper is a natural continuation of our previous
works \cite{RS$_1$}--\cite{RS$_2$}.

\medskip

\newpage

$$
{}
$$

Given a family $\Gamma$ of paths $\gamma$ in ${\Bbb R}^n\,,$ $n\ge
2\,,$ a Borel function $\rho:{\Bbb R}^n \rightarrow [0,\infty]$ is
called {\bf admissible} for $\Gamma,$ abbr. $\rho \in {\rm
adm}\,\Gamma,$ if
$$\int\limits_{\gamma} \rho(x)\,|dx| \ge 1 $$
%
for each $\gamma\in\Gamma .$ The {\bf modulus} of $\Gamma$ is the
quantity
$$M(\Gamma) =\inf\limits_{ \rho \in {\rm adm}\,\Gamma}
\int\limits_{{\Bbb R}^n} \rho^n(x)\, dm(x) \,. $$
\medskip

Given a domain $D$ and two sets $E$ and $F$ in ${\overline{{\Bbb
R}^n}},$ \,\,$n\ge 2,$\,\, $\Gamma(E,F,D)$ denotes the family of all
paths $\gamma:[a,b] \rightarrow {\overline{{\Bbb R}^n}}$ which join
$E$ and $F$ in $D$, i.e., $\gamma(a) \in E, \ \gamma(b) \in F$ and
$\gamma(t) \in D$ for $a<t<b$. We set $\Gamma(E,F)=
\Gamma(E,F,{\overline{{\Bbb R}^n}})$ if $D={\overline{{\Bbb R}^n}}.$
A {\bf ring domain}, or shortly a {\bf ring} in $\overline{{\Bbb
R}^n}\,,$ is a domain $R$ in $\overline{{\Bbb R}^n}$ whose
complement has two connected components. Let $R$ be a ring in
$\overline{{\Bbb R}^n} .$ If $C_1$ and $C_2$ are the connected
components of $\overline{{\Bbb R}^n} \setminus R,$ we write
$R=R(C_1,C_2).$ The {\bf capacity} of $R$ can be defined by the
equlity
$${\rm cap}\,R(C_1, C_2)\ =\ M(\Gamma (C_1,C_2,R))\ , $$
see e.g. 5.49 in \cite{Vu}. Note also that
$$M(\Gamma (C_1,C_2,R))\ =\ M(\Gamma (C_1,C_2))\ , $$
 see e.g. Theorem 11.3 in \cite{Va}.
A {\bf conformal modulus} of a ring $R(C_0, C_1)$ is defined by
$${\rm mod\,}R(C_0, C_1)=\left(\frac{\omega_{n-1}}{M(\Gamma(C_0, C_1))}\right)^{1/(n-1)}\,,$$
where $\omega_{n-1}$ denotes the area of the unit sphere in ${\Bbb
R}^n,$ see e.g. (5.50) in \cite{Vu}.

\medskip
The following notion was motivated by the ring definition of
quasiconformality in \cite{Ge}. Let $D$ be a domain in ${{\Bbb
R}^n},$  $Q: D \rightarrow (0,\infty)$ be a (Lebesgue)
measurable function. Set

\newpage

$$A(x_0,r_1,r_2) = \{ x\in{\Bbb R}^n : r_1<|x-x_0|<r_2\}\
,$$
$$S(x_0,r_i) = \{ x\in{\Bbb R}^n : |x-x_0|=r_i\}\,\,,\ \ \ i=1,2.$$
%
We say, see \cite{RS$_1$} for the spatial case, that a homeomorphism
$f$ of $D$ into $\overline{{\Bbb R}^n}$ is a {\bf ring
$Q$--homeo\-mor\-phism at a point} $x_0\in D$ if
\begin{equation}\label{1.8}
M\left(\Gamma\left(f(S_1),\,f(S_2)\right)\right)\ \le
\int\limits_{A} Q(x)\cdot \eta^n(|x-x_0|)\ dm(x)
\end{equation}
for every ring $A=A(x_0,r_1,r_2),$ $0<r_1<r_2< r_0={\rm
dist}(x_0,\partial D),$ $S_i=S(x_0, r_i),$ $i=1,2,$ and for every
Lebesgue measurable function $\eta : (r_1,r_2)\rightarrow [0,\infty
]$ such that
$$\int\limits_{r_1}^{r_2}\eta(r)\ dr\ \ge\ 1\ .$$

If the condition (\ref{1.8}) holds at every point $x_0\in D,$ then
we also say that $f$ is a ring $Q$--homeo\-mor\-phism in the domain
$D.$

\cc
\bigskip
\section{On BMO and FMO functions}
\bigskip

\medskip
Recall that a real valued function $\varphi \in$ L$^1_{\rm loc}(D)$
given a domain $D\subset{{\Bbb R}}^n$ is said to be of {\bf bounded
mean oscillation} by John and Nierenberg,  abbr. $\varphi \in$
BMO(D) or simply $\varphi \in$ {\bf BMO}, if
$$\Vert \varphi \Vert _* = \sup\limits_{B\subset D}
\dashint\limits_B \vert \varphi (x) - \varphi _B \vert\, dm(x)
\,\,\, < \,\,\, \infty$$
where the supremum is taken over all balls $B$ in $D$
and
$$\varphi _B = \dashint \limits_B \varphi (x)\, dm(x):= {\frac{1}{
\vert B \vert}} \int\limits_B \varphi (x)\, dm(x) $$
is the average of the function $\varphi  $ over $B.$ For connections
of BMO functions with quasiconformal and quasiregular mappings, see
e.g. \cite{As}, \cite{AG}, \cite{Jo},\,\cite{MRV} and \cite{RR}.

Following \cite{IR}, we say that a function $\varphi : D\rightarrow
{\Bbb R} $ has {\bf finite mean oscillation} at a point $x_0 \in {D}
$ if

\begin{equation}\label{eq49} \overline{\lim\limits_{\varepsilon\rightarrow 0}}\ \
\dashint_{B( x_0 ,\varepsilon)}
|\varphi(x)-\widetilde{\varphi_{\varepsilon}}|\, dm(x)\, < \,\infty
\end{equation}
where
$$\widetilde{\varphi_{\varepsilon}}= \dashint_{B( x_0 ,\varepsilon)}
\varphi(x)\, dm(x)$$
%
is the average of the function $\varphi(x) $ over the ball
$B(x_0,\varepsilon)=\{x\in {\Bbb R}^n: |x-x_0|<\varepsilon\}.$ Note
that under $(\ref{eq49})$ it is possible that
$\widetilde{\varphi_{\varepsilon}} \rightarrow\infty $ as
$\varepsilon\rightarrow 0 .$

\medskip
We also say that a function $\varphi : D\rightarrow {\Bbb R} $ is of
finite mean oscillation in the domain D, abbr. $\varphi\in$ FMO$(D)$
or simply $\varphi\in$ {\bf FMO}, if $\varphi $ has finite mean
oscillation at every point $x \in {D}.$ Note that FMO is not
BMO$_{\rm loc},$ see examples in \cite{MRSY}, p. 211. It is
well--known that $L^{\,\infty }(D)\subset$ BMO$(D)\subset$ $L^p_{\rm
loc}(D)$ for all $1\le p<\infty ,$ see e.g.\,\,\cite{JN} and
\cite{RR}, but FMO$(D)\not\subseteq L_{\rm loc}^p(D)$ for any $p>1.$

\medskip
Recall some facts on finite mean oscillation from \cite{IR}, see
also 6.2 in \cite{MRSY}.

\medskip
\begin{proposition}{}\label{cor4}{\sl\,
 If, for some numbers $\varphi_{\varepsilon}\in
{{\Bbb R}},\ \ \varepsilon \in (0,\varepsilon_0] $,
$$\overline{\lim\limits_{\varepsilon\rightarrow 0}}\ \ \dashint_{B(
x_0 ,\varepsilon)} |\varphi(x)-\varphi_{\varepsilon}|\, dm(x) <
\infty\, ,$$
then  ${\varphi}$ has finite mean osillation at
$x_0$.}
\end{proposition}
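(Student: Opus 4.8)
The plan is to show that the assumed finiteness of the limit superior with the \emph{arbitrary} constants $\varphi_\varepsilon$ forces the same finiteness with the \emph{canonical} constants $\widetilde{\varphi_\varepsilon}$, i.e.\ condition (\ref{eq49}). The elementary triangle inequality gives, for each $\varepsilon\in(0,\varepsilon_0]$,
$$
\dashint_{B(x_0,\varepsilon)}|\varphi(x)-\widetilde{\varphi_\varepsilon}|\,dm(x)\ \le\ \dashint_{B(x_0,\varepsilon)}|\varphi(x)-\varphi_\varepsilon|\,dm(x)\ +\ |\varphi_\varepsilon-\widetilde{\varphi_\varepsilon}|\, ,
$$
so everything reduces to controlling the single number $|\varphi_\varepsilon-\widetilde{\varphi_\varepsilon}|$ by the first integral on the right.

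The key step is the observation that $\widetilde{\varphi_\varepsilon}$ is itself an average, so the difference $\varphi_\varepsilon-\widetilde{\varphi_\varepsilon}$ can be written as an average of $\varphi_\varepsilon-\varphi(x)$:
$$
|\varphi_\varepsilon-\widetilde{\varphi_\varepsilon}|\ =\ \left|\,\dashint_{B(x_0,\varepsilon)}\bigl(\varphi_\varepsilon-\varphi(x)\bigr)\,dm(x)\,\right|\ \le\ \dashint_{B(x_0,\varepsilon)}|\varphi(x)-\varphi_\varepsilon|\,dm(x)\, .
$$
Combining the two displayed inequalities yields
$$
\dashint_{B(x_0,\varepsilon)}|\varphi(x)-\widetilde{\varphi_\varepsilon}|\,dm(x)\ \le\ 2\,\dashint_{B(x_0,\varepsilon)}|\varphi(x)-\varphi_\varepsilon|\,dm(x)\, .
$$
Taking $\overline{\lim}_{\varepsilon\to0}$ of both sides and invoking the hypothesis, the right-hand side stays finite, hence so does the left-hand side, which is precisely (\ref{eq49}); thus $\varphi$ has finite mean oscillation at $x_0$.

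There is no real obstacle here — the argument is a two-line application of the triangle inequality plus the self-averaging trick. The only point that deserves a word of care is the tacit requirement $\varphi\in L^1_{\rm loc}$ near $x_0$ (so that all the averages are meaningful and finite for small $\varepsilon$), which is part of the standing hypotheses on $\varphi$; granting that, the factor $2$ in the final estimate is all the ``work'' involved, and one could even sharpen the constant, though it is irrelevant for the qualitative conclusion.
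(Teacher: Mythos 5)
Your proof is correct: the triangle inequality together with the self-averaging bound $|\varphi_\varepsilon-\widetilde{\varphi_\varepsilon}|\le\dashint_{B(x_0,\varepsilon)}|\varphi(x)-\varphi_\varepsilon|\,dm(x)$ gives the factor-$2$ estimate, and the hypothesis itself guarantees $\varphi\in L^1(B(x_0,\varepsilon))$ for small $\varepsilon$, so the averages are well defined. The paper only recalls this proposition from the cited reference without reproducing a proof, and your argument is exactly the standard one given there.
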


\begin{corollary}{}\label{cor5}{\sl\,
If, for a point $x_0\in {D} ,$
$$\overline{\lim\limits_{\varepsilon\rightarrow 0}}\ \ \dashint_{B(
x_0 ,\varepsilon)} |\varphi(x)|\, dm(x) < \infty \, ,$$
then $\varphi $  has finite mean oscillation at
$x_0.$}
\end{corollary}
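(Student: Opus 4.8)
The plan is to deduce Corollary~\ref{cor5} directly from Proposition~\ref{cor4} by choosing the constants $\varphi_\varepsilon$ appropriately, namely $\varphi_\varepsilon \equiv 0$ for all $\varepsilon\in(0,\varepsilon_0]$. With this choice the hypothesis of Proposition~\ref{cor4} reads precisely
$$\overline{\lim\limits_{\varepsilon\rightarrow 0}}\ \ \dashint_{B(x_0,\varepsilon)} |\varphi(x) - 0|\, dm(x) = \overline{\lim\limits_{\varepsilon\rightarrow 0}}\ \ \dashint_{B(x_0,\varepsilon)} |\varphi(x)|\, dm(x) < \infty,$$
which is exactly the assumption of the corollary. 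Hence Proposition~\ref{cor4} applies and yields that $\varphi$ has finite mean oscillation at $x_0$, which is the desired conclusion.

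The only point that deserves a line of justification is that the constant function $0$ is an admissible choice for the family $\{\varphi_\varepsilon\}$ in Proposition~\ref{cor4}: that proposition allows \emph{any} real numbers $\varphi_\varepsilon$, with no further restriction, so taking them all equal to $0$ is legitimate. One could equally well run the argument with $\varphi_\varepsilon = \widetilde{\varphi_\varepsilon}$, but then the statement would be a tautology rather than a corollary; the content of Proposition~\ref{cor4} is precisely that the specific averages $\widetilde{\varphi_\varepsilon}$ may be replaced by arbitrary centering constants, and it is this flexibility that makes the $0$-substitution work.

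I do not expect any genuine obstacle here: the corollary is a formal specialization of the proposition, and the proof is a one-step substitution. If a reader wanted an entirely self-contained argument, one could alternatively observe the elementary inequality
$$\dashint_{B(x_0,\varepsilon)} |\varphi(x) - \widetilde{\varphi_\varepsilon}|\, dm(x) \le 2\,\dashint_{B(x_0,\varepsilon)} |\varphi(x)|\, dm(x),$$
obtained from the triangle inequality together with $|\widetilde{\varphi_\varepsilon}| \le \dashint_{B(x_0,\varepsilon)} |\varphi(x)|\, dm(x)$, and then pass to the upper limit as $\varepsilon\to 0$; the finiteness of the right-hand side's upper limit immediately gives the finiteness of the left-hand side's upper limit, i.e.\ condition~(\ref{eq49}). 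Either route completes the proof.
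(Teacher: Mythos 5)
Your proof is correct and is exactly the intended derivation: the corollary follows from Proposition~\ref{cor4} by the choice $\varphi_\varepsilon\equiv 0$, which is precisely how the paper (following \cite{IR}) obtains it. The additional self-contained estimate via the triangle inequality is also valid but unnecessary.
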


\medskip
\begin{lemma}{}\label{lem6}{\sl\,
Let $\varphi: D\rightarrow {{\Bbb R}}, n\ge 2 $, be a nonnegative
function with a finite mean oscillation at $0\in D $. Then
$$\int\limits_{\varepsilon<|x|< {\varepsilon_0}}\frac{\varphi (x)\,
dm(x)} {(|x| \log \frac{1}{|x|})^n} = O \left(\log{\log{
\frac{1}{\varepsilon}}}\right)$$
as $\varepsilon \rightarrow 0 $ for a positive $\varepsilon_0 \le
{\rm dist}\,(0,\partial D)$.\\}
\end{lemma}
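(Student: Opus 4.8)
The plan is to exploit the finite-mean-oscillation hypothesis to control the averages of $\varphi$ over a decreasing sequence of concentric balls, and then to sum the contributions of the resulting dyadic-type annuli. First I would fix a geometric sequence of radii. Since $\varphi$ has finite mean oscillation at $0$, by Proposition~\ref{cor4} there is a constant $C<\infty$ and an $\varepsilon_1\in(0,\varepsilon_0]$ together with numbers $\varphi_k$ such that $\dashint_{B_k}|\varphi(x)-\varphi_k|\,dm(x)\le C$ for the balls $B_k=B(0,2^{-k})$, $k\ge k_0$. A standard telescoping estimate (the usual John--Nirenberg-type comparison of $\varphi_B$ over nested balls, where the volume ratio $|B_k|/|B_{k+1}|=2^n$ is a fixed constant) then gives $|\varphi_k|\le |\varphi_{k_0}|+2^n C\,(k-k_0)$, so $|\varphi_k|=O(k)$, and consequently $\int_{B_k}|\varphi(x)|\,dm(x)\le \Omega_n 2^{-kn}\bigl(|\varphi_k|+C\bigr)=O\bigl(k\,2^{-kn}\bigr)$, where $\Omega_n$ is the volume of the unit ball.

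Next I would dyadically decompose the region of integration. For $\varepsilon<2^{-k_0}$ choose $N=N(\varepsilon)$ with $2^{-N-1}\le\varepsilon<2^{-N}$, so $N\sim\log_2\frac1\varepsilon$, and write
$$
\int\limits_{\varepsilon<|x|<\varepsilon_0}\frac{\varphi(x)\,dm(x)}{\bigl(|x|\log\frac1{|x|}\bigr)^n}
\ \le\ \sum_{k=k_0}^{N}\ \int\limits_{A_k}\frac{\varphi(x)\,dm(x)}{\bigl(|x|\log\frac1{|x|}\bigr)^n}\ +\ O(1),
$$
where $A_k=\{2^{-k-1}<|x|<2^{-k}\}$ and the $O(1)$ absorbs the fixed outer piece $2^{-k_0}<|x|<\varepsilon_0$. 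On $A_k$ one has $|x|\ge 2^{-k-1}$ and $\log\frac1{|x|}\ge (k)\log 2$, hence the kernel is bounded by $(2^{-k-1}\cdot k\log 2)^{-n}=c_n\,2^{kn}k^{-n}$. Therefore the $k$-th term is $\le c_n\,2^{kn}k^{-n}\int_{A_k}\varphi(x)\,dm(x)\le c_n\,2^{kn}k^{-n}\int_{B_k}\varphi(x)\,dm(x)=c_n\,2^{kn}k^{-n}\,O(k\,2^{-kn})=O(k^{-(n-1)})$. Since $n\ge 2$, $\sum_{k\ge k_0}k^{-(n-1)}$ need not converge only when $n=2$, in which case the partial sum $\sum_{k=k_0}^{N}k^{-1}=O(\log N)=O\bigl(\log\log\frac1\varepsilon\bigr)$; for $n\ge 3$ the sum is bounded, which is trivially $O(\log\log\frac1\varepsilon)$. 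This yields the claimed bound.

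The main obstacle is the first step: getting the linear-in-$k$ bound $|\varphi_k|=O(k)$ from finite mean oscillation. Finite mean oscillation at a point is weaker than BMO, so the numbers $\widetilde{\varphi_\varepsilon}$ may themselves blow up; the content of Proposition~\ref{cor4} is precisely that the defining $\limsup$ can be taken with respect to \emph{any} normalizing sequence, and one must choose that sequence wisely. The cleanest route is to take $\varphi_k=\widetilde{\varphi}_{2^{-k}}$ directly and estimate $|\widetilde\varphi_{2^{-k}}-\widetilde\varphi_{2^{-k-1}}|\le \dashint_{B_{k+1}}|\varphi-\widetilde\varphi_{2^{-k}}|\,dm\le 2^n\dashint_{B_k}|\varphi-\widetilde\varphi_{2^{-k}}|\,dm$, which is $\le 2^nC$ for $k$ large by $(\ref{eq49})$; summing gives $|\widetilde\varphi_{2^{-k}}|\le |\widetilde\varphi_{2^{-k_0}}|+2^nC(k-k_0)$, the desired $O(k)$. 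Once this is in hand the rest is the routine annular summation sketched above, and the only place the value $n=2$ is distinguished is in recognizing $\sum k^{-1}\sim\log\log$.
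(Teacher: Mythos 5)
Your proof is correct and follows essentially the same route as the argument the paper relies on (Lemma \ref{lem6} is quoted from \cite{IR}, see also 6.2 in \cite{MRSY}): dyadic annuli about the point, the telescoping estimate $\widetilde{\varphi}_{2^{-k}}=O(k)$ obtained from the fixed volume ratio $|B_k|/|B_{k+1}|=2^n$, and summation of the resulting terms $O\left(k^{-(n-1)}\right)$ up to $N\sim\log_2\frac{1}{\varepsilon}$, which yields $O\left(\log\log\frac{1}{\varepsilon}\right)$. The only cosmetic remark is that the opening appeal to Proposition \ref{cor4} is superfluous and in fact points in the opposite direction; the bound you need is immediate from the definition (\ref{eq49}) with $\varphi_k=\widetilde{\varphi}_{2^{-k}}$, exactly as you yourself observe at the end.
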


This lemma takes an important part in many applications to the
mapping theory as well as to the theory of the Beltrami equations,
see e.g. the monographs \cite{MRSY} and \cite{GRSY}.

\cc
\section{Convergence of General Homeomorphisms}

In what follows, we use in $\overline{{{\Bbb R}}^n}={{\Bbb
R}}^n\bigcup\{\infty\}$ the {\bf spherical (chordal) metric}
$h(x,y)=|\pi(x)-\pi(y)|$ where $\pi$ is the stereographic projection
of $\overline{{{\Bbb R}}^n}$ onto the sphere
$S^n(\frac{1}{2}e_{n+1},\frac{1}{2})$ in ${{\Bbb R}}^{n+1},$ i.e.
$$h(x,y)=\frac{|x-y|}{\sqrt{1+{|x|}^2} \sqrt{1+{|y|}^2}},\ \,\, x\ne
\infty\ne y, $$
$$
h(x,\infty)=\frac{1}{\sqrt{1+{|x|}^2}}\ .
$$
It is clear that $\overline{{\Bbb R}^n}$ is homeomorphic to the unit
sphere ${\Bbb S}^n$ in ${\Bbb R}^{n+1}.$

The {\bf spherical (chordal) diameter} of a set $E \subset
\overline{{\Bbb R}^n}$ is
$$h(E)=\sup_{x,y \in E} h(x,y)\,.$$
 We also define $h(z, E)$
for $z\in \overline{{\Bbb R}^n}$ and $E\subseteq \overline{{\Bbb
R}^n}$ as a infimum of $h(z, y)$ over all $y\in E$ and $h(F, E)$ for
$F\subseteq  \overline{{\Bbb R}^n}$ and $E\subseteq \overline{{\Bbb
R}^n}$ as a infimum of $h(z, y)$ over all $z\in F$ and $y\in E.$
Later on, we also use the notation $B^{\,*}(x_0, \rho),$ $x_0\in
\overline{{\Bbb R}^n},$ $\rho\in (0, 1),$ for the balls $\{x\in
\overline{{\Bbb R}^n}: h(x, x_0)<\rho\}$ with respect to the
spherical metric.

\medskip
Let us start from the simple consequence of the well--known Brouwer
theorem on invariance of domains.

\medskip
\begin{corollary}\label{cor3.1}
{\sl\, Let $U$ be an open set in $\overline{{\Bbb R}^n}$ and let
$f:U\rightarrow \overline{{\Bbb R}^n}$ be continuous and injective.
Then $f$ is a homeomorphism $U$ onto $V=f(U).$}
\end{corollary}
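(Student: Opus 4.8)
The plan is to reduce the statement to Brouwer's invariance of domains theorem by an exhaustion argument, since $\overline{{\Bbb R}^n}$ is a manifold but not an open subset of a single copy of ${\Bbb R}^n$. First I would recall what must be shown: $f$ is continuous by hypothesis, and injective, so $f:U\to V$ is a continuous bijection; it remains only to prove that $f^{-1}:V\to U$ is continuous, equivalently that $f$ is an open map. Openness is a local property, so it suffices to show that for each point $p\in U$ there is an open neighbourhood $W\subset U$ of $p$ such that $f(W)$ is open in $\overline{{\Bbb R}^n}$.

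Next I would set up charts. Fix $p\in U$. Choose a homeomorphism $\psi$ from a neighbourhood of $p$ onto an open subset of ${\Bbb R}^n$ — concretely, a rotation of the stereographic projection $\pi$ composed with $\pi^{-1}$, i.e. a Möbius self-map of $\overline{{\Bbb R}^n}$ carrying $p$ to a finite point, so that some spherical ball $B^{\,*}(p,\rho)\subset U$ gets mapped homeomorphically onto a Euclidean ball. Similarly choose a Möbius self-map $\varphi$ of $\overline{{\Bbb R}^n}$ carrying $f(p)$ to a finite point; since $f(p)$ has an open neighbourhood in $\overline{{\Bbb R}^n}$, after shrinking $\rho$ we may assume $f(B^{\,*}(p,\rho))$ lies in the domain where $\varphi$ is a homeomorphism onto an open subset of ${\Bbb R}^n$. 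Then $g:=\varphi\circ f\circ\psi^{-1}$ is a continuous injective map from an open subset of ${\Bbb R}^n$ into ${\Bbb R}^n$. By Brouwer's theorem on invariance of domains, $g$ is open; hence $g$ maps $\psi(B^{\,*}(p,\rho'))$ to an open set for a suitable $\rho'<\rho$, and pulling back through the homeomorphisms $\varphi^{-1}$ and using that $\psi$ is a homeomorphism, $f(B^{\,*}(p,\rho'))=\varphi^{-1}\bigl(g(\psi(B^{\,*}(p,\rho')))\bigr)$ is open in $\overline{{\Bbb R}^n}$.

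Since $p\in U$ was arbitrary, every point of $V=f(U)$ has an open neighbourhood contained in $V$, so $V$ is open, and the same argument applied to arbitrary open subsets of $U$ shows $f$ is an open map; combined with continuity and bijectivity of $f$ onto $V$, this gives that $f$ is a homeomorphism of $U$ onto $V$. The only mild technical point — not really an obstacle — is the bookkeeping with the two Möbius charts: one must make sure the neighbourhoods are nested so that all compositions are defined and each chart restricts to a genuine homeomorphism onto an open subset of Euclidean space. Everything else is a direct invocation of invariance of domains, which is why the result is phrased as a corollary.
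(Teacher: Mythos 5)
Your argument is correct and follows essentially the same route as the paper: both reduce the continuity of $f^{-1}$ to Brouwer's invariance of domains applied near an arbitrary point, concluding that images of small spherical balls are open. The only difference is technical bookkeeping: the paper first restricts $f$ to a closed ball $\overline{B^{\,*}(x_0,\varepsilon_0)}$ and uses the fact that a continuous injection of a compactum into a Hausdorff space is a homeomorphism onto its image before invoking invariance of domains on $\overline{{\Bbb R}^n}\cong {\Bbb S}^n$, whereas you transfer to the Euclidean form of the theorem via M\"obius charts, which in particular handles the point at infinity explicitly.
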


\medskip
\begin{proof}
Let $y_0\in f(D)$ and $x_0:=f^{\,-1}(y_0).$ Set $B=B^{\,*}(x_0,
\varepsilon_0)$ where $0<\varepsilon_0<h(x_0, \partial D).$ Then
$\overline{B}\subset D.$ Note that a mapping
$f_0:=f|_{\overline{B}}$ is injective and continuous and maps the
compactum $\overline{B}$ into the Hausdorff topological space ${\Bbb
R}^n.$ Consequently, $f_0$ is a homeomorphism of $\overline{B}$ onto
the topological space $f_0(\overline{B})$ with the topology induced
by topology of ${\Bbb R}^n$ (see Theorem 41.III.3 in \cite{Ku$_2$}).
By the Brouwer theorem on invariance domains (see e.g. Theorem
4.7.16 in \cite{Sp}), $f$ maps the ball $B$ onto a domain in
$\overline{{\Bbb R}^n}$ as a homeomorphism. Hence it follows that
the mapping $f^{\,-1}(y)$ is continuous at the point $y_0$. Thus,
$f:D\rightarrow \overline{{\Bbb R}^n}$ is a homeomorphism.
\end{proof} $\Box$

\medskip
The {\bf kernel of a sequence of open sets}
$\Omega_l\subset\overline{{\Bbb R}^n}$, $l=1,2,\ldots$ is the open
set
$$
\Omega_0\ =\ {\rm Kern}\ \Omega_l\ \colon =
\bigcup\limits_{m=1}^{\infty}\ {\rm Int} \left(
\bigcap\limits_{l=m}^{\infty}\ \Omega_l\right),
$$
where ${\rm Int}\ A$ denotes the set consisting of all inner points
of $A$; in other words, ${\rm Int}\ A$ is the union of all open
balls in $A$ with respect to the spherical distance.

\medskip
The following statement for the plane case can be found in
\cite{BGR}, see also Proposition 2.7 in \cite{GRSY}.

\medskip
\begin{proposition} \label{prCOM3.1}
{\sl Let $g_l: D\rightarrow D^{\,\prime}_l,$
$D^{\,\prime}_l:=g_l(D),$ be a sequence of ho\-meo\-mor\-phisms
given in a domain $D\subset\overline{{\Bbb R}^n}.$ Suppose that
$g_l$ converge as $l\rightarrow\infty$ locally uniformly with
respect to the spherical (chordal) metric to a mapping $g:
D\rightarrow D^{\,\prime}:=g(D)\subset\overline{{\Bbb R}^n}$ which
is injective. Then $g$ is a homeomorphism and $D^{\,\prime}\subset
{\rm Kern}\, D^{\,\prime}_l.$}\end{proposition}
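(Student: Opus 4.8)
The plan is to prove the two assertions separately: first that $g$ is a homeomorphism, then the kernel inclusion. For the first part, the main point is that the locally uniform limit $g$ is continuous (a standard fact about locally uniform convergence with respect to the metric $h$), so by hypothesis $g$ is a continuous injection of the domain $D$ into $\overline{{\Bbb R}^n}$. Corollary \ref{cor3.1} then immediately yields that $g$ is a homeomorphism of $D$ onto $D^{\,\prime}=g(D)$.

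For the inclusion $D^{\,\prime}\subset {\rm Kern}\,D^{\,\prime}_l$, fix a point $y_0\in D^{\,\prime}$ and set $x_0:=g^{\,-1}(y_0)\in D$. I would choose $\varepsilon_0>0$ so small that the closed spherical ball $\overline{B^{\,*}(x_0,\varepsilon_0)}$ is contained in $D$; write $S$ for the corresponding sphere $\{x: h(x,x_0)=\varepsilon_0\}$. Since $g$ is a homeomorphism, $g(S)$ is a compact set in $\overline{{\Bbb R}^n}$ not containing $y_0$, while $y_0$ lies in the bounded component — in the sense of $\overline{{\Bbb R}^n}$ — of the complement of $g(S)$, namely in $g(B^{\,*}(x_0,\varepsilon_0))$. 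Let $\delta:=h(y_0,g(S))>0$. By uniform convergence of $g_l$ to $g$ on the compactum $\overline{B^{\,*}(x_0,\varepsilon_0)}$, there is $m$ such that for all $l\ge m$ we have $h(g_l(x),g(x))<\delta/2$ for every $x\in\overline{B^{\,*}(x_0,\varepsilon_0)}$; in particular $h(g_l(x),y_0)\ge\delta/2$ for $x\in S$, so $y_0\notin g_l(S)$ for $l\ge m$.

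The key step is a topological degree (or connectedness) argument showing that $y_0\in g_l(B^{\,*}(x_0,\varepsilon_0))$ for all large $l$. One clean way: the homeomorphisms $g_l$ and $g$, restricted to the sphere $S\cong{\Bbb S}^{n-1}$, are homotopic within $\overline{{\Bbb R}^n}\setminus\{y_0\}$ via the straight-line homotopy in the stereographic model, because the segment joining $g_l(x)$ and $g(x)$ has spherical diameter less than $\delta$ and hence stays away from $y_0$; therefore the topological degree of $g_l|_S$ about $y_0$ equals that of $g|_S$ about $y_0$, which is nonzero since $g$ is a homeomorphism carrying $B^{\,*}(x_0,\varepsilon_0)$ onto a neighborhood of $y_0$. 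A nonzero degree forces $y_0\in g_l(B^{\,*}(x_0,\varepsilon_0))\subset D^{\,\prime}_l$. Thus the ball $B^{\,*}(y_0,\delta/2)$ — or at least the open connected set $g(B^{\,*}(x_0,\varepsilon_0))$ — is contained in $\bigcap_{l\ge m}D^{\,\prime}_l$, hence in ${\rm Int}\bigcap_{l\ge m}D^{\,\prime}_l\subset{\rm Kern}\,D^{\,\prime}_l$. Since $y_0$ was an arbitrary point of $D^{\,\prime}$, this gives $D^{\,\prime}\subset{\rm Kern}\,D^{\,\prime}_l$.

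The main obstacle is making the degree/connectedness step fully rigorous in $\overline{{\Bbb R}^n}$ rather than in ${\Bbb R}^n$: one must either transport everything to the sphere ${\Bbb S}^n$ via the stereographic projection $\pi$ (where Brouwer-type arguments and topological degree are available in the usual form) or argue directly that $g_l(\overline{B^{\,*}(x_0,\varepsilon_0)})$ is a compact connected set whose boundary $g_l(S)$ misses $y_0$ while it surrounds $y_0$ in the appropriate sense. Once the limit mapping's image of a small ball is pinned between the images $g_l(S)$, the inclusion follows from elementary properties of ${\rm Int}$ and of the kernel.
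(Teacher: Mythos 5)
Your proof of the first assertion (continuity of the locally uniform limit plus Corollary \ref{cor3.1}) is exactly the paper's argument, but for the kernel inclusion you take a genuinely different and heavier route: you obtain $y_0\in g_l(B^{\,*}(x_0,\varepsilon_0))$ by a homotopy--invariance--of--degree argument, whereas the paper avoids degree theory altogether by using the anchor point $g_l(x_0)$: since $g_l(x_0)\rightarrow g(x_0)=y_0$, for all large $l$ the point $g_l(x_0)$ lies in the small ball $B^{\,*}(y_0,\delta/2)$, this ball is disjoint from $g_l(S)\supseteq\partial g_l(B^{\,*}(x_0,\varepsilon_0))$ by the uniform convergence on the compact sphere $S$, and then connectedness of the ball (Theorem 46.I.1 in \cite{Ku$_2$}) forces $B^{\,*}(y_0,\delta/2)\subset g_l(B^{\,*}(x_0,\varepsilon_0))\subset D^{\,\prime}_l$, which gives the kernel inclusion at once. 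Your route can be made to work, but beyond the technicalities you yourself flag, two formulations need tightening: (i) ``the degree of $g_l|_S$ about $y_0$'' is not by itself a meaningful homotopy invariant in $\overline{{\Bbb R}^n}\cong{\Bbb S}^n$, since $\overline{{\Bbb R}^n}\setminus\{y_0\}$ is contractible and, on the sphere (unlike in ${\Bbb R}^n$), the local degree of a map of a closed ball is not determined by its boundary values alone; what you actually need is the local degree $\deg(g_l,B,y_0)$ of the map of the closed ball and its invariance under homotopies of the full map whose restriction to $S$ avoids $y_0$ --- your (projected) straight--line homotopy does extend to the closed ball, so this is repairable; (ii) passing from $y_0\in g_l(B)$ to a whole neighborhood of $y_0$ in $\bigcap_{l\ge m}D^{\,\prime}_l$ requires either constancy of the degree on the component of the complement of $g_l(S)$ containing $B^{\,*}(y_0,\delta/2)$, or precisely the paper's connectedness step, while your parenthetical alternative that all of $g(B^{\,*}(x_0,\varepsilon_0))$ lies in $\bigcap_{l\ge m}D^{\,\prime}_l$ for the same $m$ is not justified, because points of $g(B)$ within $\delta/2$ of $g(S)$ are not controlled by your estimates. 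Note finally that once you know $B^{\,*}(y_0,\delta/2)\cap g_l(S)=\varnothing$, the degree machinery is superfluous: the single point $g_l(x_0)\in B^{\,*}(y_0,\delta/2)\cap g_l(B)$, available for free from pointwise convergence, together with connectedness already yields the conclusion --- this is exactly what the paper's shorter proof buys.
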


\medskip
\begin{proof} First of all, the mapping $g$ is continuous as a locally
uniform limit of continuous mappings, see e.g. Theorem 13.VI.3 in
\cite{Ku$_1$}. Thus, by Corollary \ref{cor3.1} $g$ is a
homeomorphism.

\medskip
Now, let $y_0$ be a point in $D^{\,\prime}.$ Consider the spherical
ball $B^*(z_0,\rho)$ where $z_0:=g^{\,-1}(y_0)\in D$ and $\rho<
h(z_0,
\partial D).$ Then
$$
r_0\ \colon =\ \min\limits_{z\in\partial B^*(z_0,\rho)}\ h(y_0,
g(z))\
>\ 0\ .
$$
There is an integer $N$ large enough such that $g_l(z_0)\in B^*(y_0,
r_0/2)$ for all $l\ge N$ and simultaneously
$$
B^{\,*}(y_0, r_0/2)\ \cap\ g_l(\partial B^{\,*}(z_0,\rho))\ =\
B^{\,*}(y_0, r_0/2)\ \cap\
\partial g_l(B^{\,*}(z_0,\rho))\ =\ \varnothing
$$
because $g_l\rightarrow g$ uniformly on the compact set $\partial
B^{\,*}(z_0,\rho).$ Hence by the con\-nec\-ted\-ness of balls
$$
B^{\,*}(y_0, r_0/2)\ \subset\ g_l(B^{\,*}(z_0,\rho))\qquad \forall\
l \ge N\,,
$$
see e.g. Theorem 46.I.1 in \cite{Ku$_2$}. Consequently, $y_0\in {\rm
Kern}\ D^{\,\prime}_l,$ i.e. $D^{\,\prime}\subset {\rm Kern}\
D^{\,\prime}_l$ by arbitrariness of $y_0.$ $\Box$\end{proof}

\medskip
\begin{remark}\rm\label{rmkCOM3.1} In particular, Proposition
\ref{prCOM3.1} implies that $D^{\,\prime}:=g(D)\subset{\Bbb R}^n$ if
$D_l^{\,\prime}:=g_l(D)\subset{\Bbb R}^n$ for all $l=1,2,\ldots
.$\end{remark}

\medskip
The following statement for the plane case can be found in the paper
\cite{KR}, see also Lemma 2.16 in the monograph \cite{GRSY}.

\medskip
\begin{lemma}\label{inverse}
{\sl Let $D$ be a domain in $\overline{{\Bbb R}^n},$ $l=1,2,\ldots,$
and let $f_l$ be a sequence of homeomorphisms from $D$ into
$\overline{{\Bbb R}^n}$ such that $f_l$ converge as
$l\rightarrow\infty$ locally uniformly with respect to the spherical
metric to a homeomorphism $f$ from $D$ into $\overline{{\Bbb R}^n}.$
Then $f_l^{-1}\rightarrow f^{-1}$ locally uniformly in $f(D)$, too.}
\end{lemma}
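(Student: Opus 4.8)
The plan is to show that $f_l^{-1}$ converges to $f^{-1}$ uniformly on each compact subset of $D':=f(D)$. Fix a compact set $C\subset D'$ and set $K:=f^{-1}(C)$, which is a compact subset of $D$. Choose an open set $U$ with $\overline U$ compact and $K\subset U\subset\overline U\subset D$. The idea is to argue by contradiction: if uniform convergence on $C$ fails, there are $\varepsilon_0>0$, a subsequence (which I relabel as the whole sequence), and points $y_l\in C$ with $h\bigl(f_l^{-1}(y_l),f^{-1}(y_l)\bigr)\ge\varepsilon_0$.

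First I would note that $f^{-1}(C)=K\subset U$ and that, by the previous material, $C\subset D'\subset{\rm Kern}\,D_l'$ (Proposition \ref{prCOM3.1} applies since $f$ is a homeomorphism); hence for $l$ large every $y_l$ lies in $f_l(D)$, so $x_l:=f_l^{-1}(y_l)$ is well defined. The key step is to show that the points $x_l$ eventually stay inside $\overline U$. This follows from a degree/separation argument exactly like the one in the proof of Proposition \ref{prCOM3.1}: for each $z\in C$ the quantity $r(z):=\min_{w\in\partial U'}h(z,f(w))>0$ for a suitable small neighborhood, and since $f_l\to f$ uniformly on the compact set $\partial U$, one gets $f_l(\partial U)$ bounded away from $C$ uniformly in $l\ge N$ and, by connectedness of the complementary components together with $f_l(U)\ni$ points close to $C$, the inclusion $C\subset f_l(U)$, hence $x_l=f_l^{-1}(y_l)\in U$ for all $l\ge N$. (Here one uses that $C$ is compact, covering it by finitely many such neighborhoods, so $N$ can be chosen uniformly.)

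Once $\{x_l\}_{l\ge N}\subset\overline U$, compactness of $\overline U$ lets me pass to a further subsequence with $x_l\to x_*\in\overline U\subset D$. Then I would estimate
$$
h\bigl(y_l,f(x_*)\bigr)\ \le\ h\bigl(y_l,f_l(x_l)\bigr)+h\bigl(f_l(x_l),f(x_l)\bigr)+h\bigl(f(x_l),f(x_*)\bigr),
$$
where the first term is $0$ (since $y_l=f_l(x_l)$), the second tends to $0$ by locally uniform convergence $f_l\to f$ on $\overline U$, and the third tends to $0$ by continuity of $f$ and $x_l\to x_*$. Passing to a further subsequence along which $y_l\to y_*\in C$, we get $y_*=f(x_*)$, hence $x_*=f^{-1}(y_*)$. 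On the other hand $f^{-1}(y_l)\to f^{-1}(y_*)=x_*$ by continuity of $f^{-1}$, so $h\bigl(f_l^{-1}(y_l),f^{-1}(y_l)\bigr)=h(x_l,f^{-1}(y_l))\to h(x_*,x_*)=0$, contradicting $h\bigl(f_l^{-1}(y_l),f^{-1}(y_l)\bigr)\ge\varepsilon_0$. This contradiction proves the claim.

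The main obstacle is the second paragraph: showing that the preimage points $x_l$ do not escape to $\partial D$ (or to $\infty$ in the spherical sense), uniformly over $y_l\in C$. This is exactly where one needs the homeomorphism hypothesis on the limit $f$ and a topological (invariance-of-domain / connectedness-of-sphere-minus-a-set) argument rather than a purely metric one; everything after that is a routine subsequence-and-continuity argument. A clean way to package the obstacle is to reuse Proposition \ref{prCOM3.1} applied both to $g_l=f_l$ (giving $D'\subset{\rm Kern}\,D_l'$) and, after noting $f^{-1}$ is a homeomorphism, to extract the local uniform control of $f_l$ near $\partial U$ needed for the containment $C\subset f_l(U)$.
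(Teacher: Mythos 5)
Your argument is correct, but it is packaged differently from the paper's proof. The paper reduces the locally uniform convergence of $g_l=f_l^{-1}$ to \emph{continuous convergence} (citing Dugundji and Kuratowski) together with the Urysohn subsequence criterion in the compact space $\overline{{\Bbb R}^n}$: for a single convergent sequence $u_l\to u_0$ in $f(D)$ it only needs to confine the points $z_{l_k}=g_{l_k}(u_{l_k})$ to one subdomain $D_0\ni f^{-1}(u_0)$ with $\overline{D_0}\subset D$, and that is precisely the kernel inclusion of Proposition \ref{prCOM3.1} applied at the single point $u_0$; injectivity of $f$ then gives $z_*=z_0$. You instead negate uniform convergence on a compactum $C$ directly, which forces you to establish the \emph{uniform} containment $C\subset f_l(U)$ for all large $l$ --- a compactness-strengthened form of Proposition \ref{prCOM3.1} that does not follow from the kernel statement at one point alone. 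Your sketch of this step (separation via $f_l(\partial U)$ plus a finite cover) is the right idea, but the sentence involving $\partial U'$ and ``connectedness of the complementary components'' should be written out; e.g., with $3d:=h\bigl(C,f(\partial U)\bigr)>0$ and $l$ so large that $\sup_{\overline U}h(f_l,f)<d$, each ball $B^{\,*}(y,d)$, $y\in C$, meets $f_l(U)$ at $f_l(f^{-1}(y))$ and misses $\partial f_l(U)\subset f_l(\partial U)$, hence lies in $f_l(U)$. After that, your subsequence-and-triangle-inequality finish is routine and parallels the paper's use of the continuity of $f$ and $f^{-1}$. What your route buys is self-containedness (no continuous-convergence or $\mathcal{L}^*$-space machinery); what the paper's route buys is that the delicate uniformity over $C$ never has to be proved.
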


\begin{proof} Given a compactum $C\subset f(D),$ we have by Proposition
\ref{prCOM3.1} that $C\subset f_l(D)$ for all $l\ge l_0=l_0(C)$. Set
$g_l=f_l^{-1}$ and $g=f^{-1}$. The locally uniform con\-ver\-gen\-ce
$g_l\rightarrow g$ is equivalent to the so-called continuous
con\-ver\-gen\-ce, meaning that $g_l(u_l)\rightarrow g(u_0)$ for
every convergent sequence $u_l\rightarrow u_0$ in $f(D)$; see e.g.
\cite{Du}, p. 268 or Theorems 20.VIII.2 and 21.X.4 in \cite{Ku$_1$}.
So, let $u_l\in f(D)$, $l=0,1,2,\dots$ and $u_l\rightarrow u_0$ as
$l\rightarrow\infty$. Let us show that $z_l:=g(u_l)\rightarrow
z_0:=g(u_0)$ as $l\rightarrow\infty$.

It is known that every metric space is $\mathcal{L}^*$-space, i.e. a
space with a convergence (see, e.g., Theorem 21.II.1 in
\cite{Ku$_1$}), and the Uhryson axiom in compact spaces says that
$z_l\rightarrow z_0$ as $l\rightarrow\infty$ if and only if, for
every convergent subsequence $z_{l_k}\rightarrow z_*$, the equality
$z_*=z_0$ holds; see e.g. the definition 20.I.3 in \cite{Ku$_1$}.
Hence it suffices to prove that the equality $z_*=z_0$ holds for
every convergent subsequence $z_{l_k}\rightarrow z_*$ as
$k\rightarrow\infty$. Let $D_0$ be a subdomain of $D$ such that
$z_0\in D_0$ and $\overline{D_0}$ is a compact subset of $D$. Then
by Proposition \ref{prCOM3.1}, $f(D_0)\subset\text{Kern}f_l(D_0)$
and hence $u_0$ together with its neighborhood belongs to
$f_{l_k}(D_0)$ for all $k\ge K$. Thus, with no loss of generality we
may assume that $u_{l_k}\in f_{l_k}(D_0)$, i.e. $z_{l_k}\in D_0$ for
all $k=1,2,\dots$, and, consequently, $z_*\in D$. Then, by the
continuous convergence $f_l\rightarrow f$, we have that
$f_{l_k}(z_{l_k})\rightarrow f(z_*)$, i.e.
$f_{l_k}(g_{l_k}(u_{l_k}))=u_{l_k}\rightarrow f(z_*)$. The latter
implies that $u_0=f(z_*)$, i.e. $f(z_0)=f(z_*)$ and hence $z_*=z_0$.
The proof is complete. $\Box$\end{proof}

\medskip
The following statement for the plane case can be found in the paper
\cite{RSY$_5$}, see also Proposition 2.6 in the monograph
\cite{GRSY}.

\medskip
\begin{theorem}\label{pr1}
{\sl\, Let $D$ be a domain in $\overline{{\Bbb R}^n},$ $n\ge 2,$ and
let $f_m,$ $m=1,2,\ldots,$ be a sequence of homeomorphisms of $D$
into $\overline{{\Bbb R}^n}$ converging locally uniformly to a
discrete mapping $f: D\rightarrow \overline{{\Bbb R}^n}$ with
respect to the spherical metric. Then $f$ is a homeomorphism of $D$
into $\overline{{\Bbb R}^n}.$}
\end{theorem}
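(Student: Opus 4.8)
The plan is to reduce everything to injectivity of $f$: the limit $f$ is automatically continuous as a locally uniform limit of continuous mappings (cf. the start of the proof of Proposition~\ref{prCOM3.1}), so once $f$ is known to be injective, Corollary~\ref{cor3.1} gives that $f$ is a homeomorphism of $D$ onto $f(D)\subset\overline{{\Bbb R}^n}$. Thus the entire content is the injectivity of $f$, which I would prove by contradiction, the discreteness hypothesis entering at exactly one point.

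Suppose $f(x_1)=f(x_2)=:y_0$ with $x_1\ne x_2$. Since $f$ is discrete, $f^{-1}(y_0)$ is a discrete subset of $D$, so $x_1$ and $x_2$ are isolated in it; hence I can choose radii $r_1,r_2>0$ so that the spherical balls $B_i:=B^*(x_i,r_i)$ satisfy $\overline{B_i}\subset D$, $\overline{B_1}\cap\overline{B_2}=\varnothing$, and $\overline{B_i}\cap f^{-1}(y_0)=\{x_i\}$ for $i=1,2$. In particular $y_0\notin f(\partial B_i)$, and since each $f(\partial B_i)$ is compact, the number $\delta:=\tfrac12\min_{i=1,2}h(y_0,f(\partial B_i))$ is strictly positive.

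The key step --- and the one requiring the most care --- is to show that $B^*(y_0,\delta)\subset f_m(B_i)$ for $i=1,2$ and all sufficiently large $m$. This uses the same mechanism as in the proof of Proposition~\ref{prCOM3.1}. Since $\overline{B_i}$ is a compact subset of $D$, $f_m\to f$ uniformly on $\partial B_i$, so for large $m$ we have $h(y_0,f_m(x))\ge h(y_0,f(x))-\delta\ge\delta$ for all $x\in\partial B_i$, i.e. $f_m(\partial B_i)\cap B^*(y_0,\delta)=\varnothing$; at the same time $f_m(x_i)\to f(x_i)=y_0$, so $f_m(x_i)\in B^*(y_0,\delta)$ for large $m$. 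By Brouwer's invariance of domain (as in Corollary~\ref{cor3.1}), $f_m(B_i)$ is an open subset of $\overline{{\Bbb R}^n}$ with $\partial f_m(B_i)=f_m(\partial B_i)$, where one uses that $f_m$ is a homeomorphism of the compactum $\overline{B_i}$ onto its image. Hence the connected set $B^*(y_0,\delta)$ meets the open set $f_m(B_i)$ at $f_m(x_i)$ and avoids $\partial f_m(B_i)$, so it is contained in $f_m(B_i)$, as claimed.

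The contradiction is then immediate: $B_1$ and $B_2$ are disjoint and each $f_m$ is injective, so $f_m(B_1)\cap f_m(B_2)=\varnothing$, whereas for all large $m$ both of these sets contain the nonempty ball $B^*(y_0,\delta)$. This shows $f$ is injective, and the proof is completed by Corollary~\ref{cor3.1}. I do not expect the degree or orientation of the $f_m$ to play any role --- the argument is purely one of separation by connected sets, exactly parallel to Proposition~\ref{prCOM3.1} --- and the one genuinely essential use of the hypothesis is discreteness, invoked solely to separate $x_1$ and $x_2$ by balls whose boundaries avoid $f^{-1}(y_0)$.
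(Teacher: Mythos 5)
Your argument is correct, but it follows a genuinely different route from the paper's. The paper also assumes $f(x_1)=f(x_2)$ with $x_1\ne x_2$, but it takes Euclidean balls $B_t=B(x_1,t)$ (normalizing $x_1\ne\infty$), applies the Jordan--Brouwer separation theorem to $f_m(\partial B_t)$, chooses points $x_{m,t}\in\partial B_t$ realizing the chordal distance from $f_m(x_1)$ to $\partial f_m(B_t)$, extracts a convergent subsequence $x_{m_k,t}\to x_t$, and uses continuous convergence to get $h(f(x_1),f(x_t))\le h(f(x_1),f(x_2))=0$ for every $t\in(0,t_0]$; the contradiction is thus with the discreteness of $f$ at $x_1$, since the fibre over $f(x_1)$ meets every small sphere around $x_1$. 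You instead spend the discreteness at the very beginning, to isolate $x_1$ and $x_2$ in disjoint chordal balls $B_1,B_2$ whose boundaries avoid $f^{-1}(y_0)$, and you land the contradiction on the injectivity of the approximants: for large $m$ both $f_m(B_1)$ and $f_m(B_2)$ contain the common ball $B^{\,*}(y_0,\delta)$, which is impossible for injective $f_m$. Your key step (a connected set meeting the open set $f_m(B_i)$ and avoiding $\partial f_m(B_i)=f_m(\partial B_i)$ must lie in it) is exactly the device used in the proof of Proposition \ref{prCOM3.1}, and your justification of the boundary identity via compactness of $\overline{B_i}$ and injectivity is sound; in fact only the inclusion $\partial f_m(B_i)\subset f_m(\partial B_i)$ is needed. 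What your route buys: no double limit in $(m,t)$, no subsequence extraction, no special treatment of the point $\infty$ (you stay in the chordal metric throughout), and a proof visibly parallel to the kernel argument of Proposition \ref{prCOM3.1}. What the paper's route buys: it makes explicit that a non-injective limit would assume the value $f(x_1)$ on every small sphere about $x_1$, a formulation whose mechanism is reused (with non-constancy in place of discreteness) in Lemma \ref{lem7.1}. Both proofs use the same topological input (invariance of domain, respectively Jordan--Brouwer, on $\overline{{\Bbb R}^n}$), both use uniform convergence on compacta, and both conclude via Corollary \ref{cor3.1}.
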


\medskip
\begin{proof}
First of all, let us show by contradiction that $f$ is injective.
Indeed, let us assume that there exist $x_1, x_2\in D,$ $x_1\ne
x_2,$ with $f(x_1)=f(x_2)$ and that $x_1\ne \infty.$ Set
$B_t=B(x_1, t).$
%
Let $t_0$ be such that $\overline{B_t}\subset D$ and $x_2\not\in
\overline{B_t}$  for every $t\in (0, t_0]$. By the Jordan--Brower
theorem, see e.g. Theorem 4.8.15 in \cite{Sp}, $f_m(\partial
B_t)=\partial f_m(B_t)$ splits $\overline{{\Bbb R}^n}$ into two
components
$$C_m:=f_m(B_t),\qquad C_m^{\,*}=\overline{{\Bbb R}^n}\setminus \overline{C_m}\,.$$
By construction $y_m:=f_m(x_1)\in C_m$ and $z_m:=f_m(x_2)\in
C_m^{\,*}.$ Remark that the ball $B^{\,*}(y_m, h(y_m, \partial
C_m))$ is contained inside of $C_m$ and, consequently, its closure
is inside of $\overline{C_m}.$ Hence
\begin{equation}\label{eq3}
h(y_m, \partial C_m)<h(y_m, z_m),\qquad m=1,2,\dots\,.
\end{equation}
By compactness of $\partial C_m=f_m(\partial B_t),$ there is
$x_{m,t} \in
\partial B_t$ such that
\begin{equation}\label{eq3A}
h(y_m, \partial C_m) = h(y_m, f_m(x_{m,t}))\,,\qquad m=1,2,\ldots\,.
\end{equation}
By compactness of $\partial B_t$, for every $t\in (0, t_0],$ there
is $x_t\in \partial B_t$ such that $h(x_{m_k,t}, x_t)\rightarrow 0$
as $k\rightarrow \infty$ for some subsequence $m_k.$ Since the
locally uniform convergence of continuous functions in a metric
space implies the continuous convergence (see \cite{Du}, p. 268 or
Theorem 21.X.3 in \cite{Ku$_1$}), we have that
$$h(f_{m_k}(x_{m_k,t}),f(x_{t})) \rightarrow 0 $$ as $k \rightarrow
\infty $. Consequently, from (\ref{eq3}) and (\ref{eq3A}) we obtain
that
$$ h(f(x_1),f(x_t))\le h(f(x_1),f(x_2))\qquad \forall\quad t\in(0, t_0]\,.$$
However, by the above assumption $f(x_1)=f(x_2)$ and we have
$f(x_t)=f(x_1)$ for every $t\in (0,t_0].$ The latter contradicts to
the discreteness of $f.$ Thus, $f$ is injective.

\medskip
It remains to show that $f$ and $f^{\,-1}$ are continuous. A mapping
$f$ is continuous as a locally uniform limit of continuous mappings,
see e.g. Theorem 13.VI.3 in \cite{Ku$_1$}. Finally, $f^{\,-1}$ is
continuous by Corollary \ref{cor3.1}.
\end{proof} $\Box$

\cc
\section{Convergence of Homeomorphisms with \\ Modular Conditions }
\label{sec4}

\medskip
Later on, the following lemma plays a very important role. Its plane
analog can be found in the paper \cite{BJ}, see also supplement A1
in the monograph \cite{GRSY}.

\medskip
\begin{lemma}\label{lem7.1}{\sl\, Let $f_m,$ $m=1,2,\ldots,$
be a sequence of homeomorphisms of a domain $D\subseteq {\Bbb R}^n$
into ${\Bbb R}^n,$ $n\ge 2,$ converging to a mapping $f$ uniformly
on every compact set in $D$ with respect to the spherical metric in
$\overline{{\Bbb R}^n}.$ Suppose that for every $x_0\in D$ there
exist sequences $R_k>0$ and $r_k\in (0, R_k),$ $k=1,2,\ldots,$ such
that $R_k\rightarrow 0$ as $k\rightarrow \infty$ and ${\rm
mod\,}f_m\left( A\left(x_0, r_k, R_k\right)\right)\rightarrow
\infty$ as $k\rightarrow \infty$ uniformly with respect to
$m=1,2,\ldots .$ Then the mapping $f$ is either a constant in
$\overline{{\Bbb R}^n}$ or a homeomorphism of $D$ into ${\Bbb
R}^n.$}
\end{lemma}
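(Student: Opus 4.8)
The plan is to combine the modulus lower bound hypothesis with the convergence theorems already proved (Theorem~\ref{pr1} and Corollary~\ref{cor3.1}). First I would observe that by Theorem~\ref{pr1} it suffices to show that $f$ is either constant or \emph{discrete}; in fact I will show the stronger dichotomy that $f$ is either constant or injective, after which continuity of $f$ (as a locally uniform limit) and of $f^{-1}$ (by Corollary~\ref{cor3.1}) finish the argument. So assume $f$ is not constant and suppose, for contradiction, that there are two distinct points $x_1,x_2\in D$ with $f(x_1)=f(x_2)=:y_0$.

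The heart of the proof is a separating-ring estimate. Fix a point $x_0\in D$ with $x_0\ne x_1$ (choose $x_0$ so that moreover $f$ is nonconstant near $x_0$; since $f$ is nonconstant we can find such a point, and after a spherical isometry we may take everything finite). Using the hypothesis, pick $k$ large so that the annulus $A_k=A(x_0,r_k,R_k)$ has $\overline{B(x_0,R_k)}\subset D$, does not contain $x_1$ or the other troublesome point, and $\operatorname{mod}f_m(A_k)$ is large uniformly in $m$. The spheres $S(x_0,r_k)$ and $S(x_0,R_k)$ are mapped by $f_m$ to two disjoint continua $E_m, F_m$ separating $\overline{{\Bbb R}^n}$; the continuum $E_m$ "surrounds" $f_m(x_0)$ and $F_m$ surrounds everything outside $f_m(B(x_0,R_k))$. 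The key geometric fact I would invoke is the standard lower bound for the conformal modulus of a ring in terms of the spherical distance between (or spherical diameters of) its boundary components (cf. the capacity/modulus estimates behind \cite{Va}, \cite{Vu}): if a ring $R(C_0,C_1)$ has $\operatorname{mod}R$ large, then $\min\{h(C_0), h(C_1)\}$ must be small. Applying this to $R=f_m(A_k)$ and using that one boundary continuum contains $f_m(x_0)$ while the other contains $f_m(x_1)$ and $f_m(x_2)$ on appropriate sides forces one of $h(f_m(S(x_0,r_k)))$ or $h(f_m(S(x_0,R_k)))$ to tend to $0$, uniformly in $m$, as $k\to\infty$; passing to the limit $m\to\infty$ (locally uniform convergence on the compact spheres) yields $h(f(S(x_0,r_k)))\to 0$ or $h(f(S(x_0,R_k)))\to 0$, and hence, since $R_k\to 0$, that $f$ is constant on arbitrarily small spheres about $x_0$, so $f\equiv f(x_0)$ on a neighborhood of $x_0$ — contradicting non-constancy (or, pushed to $x_1$, giving $f(x_1)=f(x_0)$ with $x_0$ arbitrary, again forcing constancy). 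This is the step I expect to be the main obstacle: setting up the separation carefully so that it is genuinely the \emph{pair} $f_m(x_1),f_m(x_2)$ (on the "far" side together) versus $f_m(x_0)$ that gets separated, and invoking the right modulus-of-a-ring-versus-diameters inequality in the spherical metric with uniform constants.

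Concretely, the argument I would write runs: since $f$ is nonconstant, there is $x_0\in D$, $x_0\notin\{x_1,x_2\}$, with $f$ nonconstant in every neighborhood of $x_0$; apply the hypothesis at $x_0$ to get $r_k<R_k\to 0$ with $\operatorname{mod}f_m(A(x_0,r_k,R_k))\to\infty$ uniformly in $m$; for each fixed large $k$, the ring $f_m(A(x_0,r_k,R_k))$ separates $f_m(x_0)$ from $\infty$ (and from $f_m(x_1)$, $f_m(x_2)$ once $R_k$ is small enough that these points lie outside $\overline{B(x_0,R_k)}$); by the diameter bound one of the two bounding continua $f_m(S(x_0,r_k))$, $f_m(S(x_0,R_k))$ has spherical diameter $\le\varepsilon_k$ with $\varepsilon_k\to 0$ independent of $m$; let $m\to\infty$ to conclude $f$ shrinks a sequence of spheres $S(x_0,\rho_k)$, $\rho_k\to 0$, to points, whence by continuity $f$ is constant near $x_0$ — a contradiction. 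Therefore $f$ is injective; being a locally uniform limit of continuous maps it is continuous, and $f^{-1}$ is continuous by Corollary~\ref{cor3.1}, so $f$ is a homeomorphism of $D$ into ${\Bbb R}^n$ (the image lies in ${\Bbb R}^n$, not just $\overline{{\Bbb R}^n}$, by Remark~\ref{rmkCOM3.1}), which completes the proof. $\Box$
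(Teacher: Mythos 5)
Your overall frame (show that a nonconstant limit is injective, then get continuity of $f$ and $f^{-1}$ via Corollary \ref{cor3.1} and the range via Remark \ref{rmkCOM3.1}) matches the paper, but the central step --- deriving a contradiction from $f(x_1)=f(x_2)$ out of a diameter bound for rings of large modulus --- does not work as written. The estimate you invoke only says that, for fixed $k$ and $m$, one of the two complementary continua of $f_m(A(x_0,r_k,R_k))$ has small spherical diameter. If it is the inner one, you learn that $f_m(\overline{B(x_0,r_k)})$, hence in the limit $f(\overline{B(x_0,r_k)})$, has diameter at most $\varepsilon_k$; but since $r_k\to 0$ this is exactly what continuity of $f$ at $x_0$ gives anyway, so no contradiction arises. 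Your final inference ``$f$ is constant on arbitrarily small spheres about $x_0$, so $f\equiv f(x_0)$ on a neighborhood of $x_0$'' is a non sequitur twice over: smallness of $h(f(S(x_0,\rho_k)))$ with $\rho_k\to 0$ holds for every continuous map, and even exact constancy on a sequence of shrinking spheres would not force local constancy. (If instead the outer continuum is the small one, the information is of a different nature --- $f_m(x_1)$, $f_m(x_2)$ and everything outside $B(x_0,R_k)$ are then spherically close to $\infty$; note also that which side is small may switch with $m$, so a subsequence argument would be needed --- but this is secondary.) In short, a pure diameter estimate cannot see the points $x_1,x_2$ once they sit inside the large complementary component, so it cannot detect a failure of injectivity.

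What is missing is a pointed (three-point) estimate, and that is precisely what the paper uses. Once ${\rm mod\,}f_m(A(x_0,r_k,R_k))$ exceeds the explicit threshold $\left(\omega_{n-1}/\tau_n(1)\right)^{1/(n-1)}$, Lemma 7.34 in \cite{Vu}, applied with $a_m=f_m(x_0)$, $b_m$ the farthest point of $f_m(S(x_0,r_k))$ from $a_m$ and $c_m$ the nearest point of $f_m(S(x_0,R_k))$, gives $|a_m-c_m|>|a_m-b_m|$ by monotonicity of $\tau_n$; hence the image ring contains a round spherical annulus centred at $f_m(x_0)$. This annulus separates $f_m(B(x_0,r_k))$ from $f_m(x_*)$ for any fixed $x_*$ with $|x_*-x_0|>R_k$, whence $|f_m(x^{\,\prime})-f_m(x_0)|\le |f_m(x_*)-f_m(x_0)|$ for all $x^{\,\prime}\in B(x_0,r_k)$; letting $m\to\infty$ and choosing $x^{\,\prime}$ with $f(x^{\,\prime})\ne f(x_0)$ (possible because $x_0$ is taken outside the open set $V$ where $f$ is locally constant) yields $f(x_*)\ne f(x_0)$. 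The paper then shows separately that $V=\varnothing$. If you want to keep your scheme, replace the diameter bound by this Teich\-m\"ul\-ler-type estimate, or by any estimate comparing distances from the distinguished point $f_m(x_0)$; the remainder of your argument (continuity of the limit, Corollary \ref{cor3.1}, Remark \ref{rmkCOM3.1}) then goes through.
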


\medskip
\begin{proof} Assume that $f$ is not constant. Let us consider the
open set $V$ consisting of all points in $D$ which have
neighborhoods where $f$ is a constant and show that $f(x)\ne f(x_0)$
for every $x_0\in D\setminus V$ and $x\ne x_0.$ Without loss of
generality, we may assume that $f(x_0)\ne \infty.$ Now, let us fix a
point $x_*\ne x_0$ in $D\setminus V$ and choose $k=1,2,\ldots$ such
that $R:=R_k<|x_*-x_0|$ and
\begin{equation}\label{eq4.1}
{\rm mod}\, f_m\left(A(x_0, r, R)\right)>
\left(\omega_{n-1}/\tau_n(1)\right)^{1/(n-1)}
\end{equation}
for $r=r_k$ where $\tau_n(s)$ denotes the capacity of the
Tei\-ch\-m\"{u}\-l\-ler ring $R_{T, n}(s):=\left[{\Bbb R}^n\setminus
\{te_1: t\ge s\},\quad [-e_1, 0]\right],$ $s\in (0, \infty).$

\medskip
Let $c_m\in f_m(S(x_0, R))$ and $b_m\in f_m(S(x_0, r))$ be such that
$$\min\limits_{w\in f_m(S(x_0, R))}|w-f_m(x_0)|=|c_m-f_m(x_0)|\,,$$
$$\max\limits_{w\in f_m(S(x_0, r))}|w-f_m(x_0)|=|b_m-f_m(x_0)|\,.$$
Since $f_m$ is a homeomorphism, the set $f_m(A(x_0, r, R))$ is a
ring domain $\frak{R}_m=(C_m^1, C_m^2),$ where $a_m:=f_m(x_0)$ and
$b_m\in C_m^1,$ $c_m$ and $\infty\in C_m^2.$ Applying Lemma 7.34 in
\cite{Vu} with $a=a_m,$ $b=b_m$ and $c=c_m,$ we obtain that
\begin{equation}\label{eq7.1}
{\rm cap\,}\frak{R}_m=M(\Gamma(C_m^1, C_m^2))\ge \tau_n
\left(\frac{|a_m-c_m|}{|a_m-b_m|}\right)\,.
\end{equation}
Note that the function $\tau_n(s)$ is strictly decreasing (see Lemma
7.20 in \cite{Vu}). Thus, it follows from (\ref{eq4.1}) and
(\ref{eq7.1}) that
$$\frac{|a_m-c_m|}{|a_m-b_m|}\ge \tau_n^{\,-1}\left({\rm cap\,}
\frak{R}_m\right)>\tau_n^{\,-1}(\tau_n(1))=1\,.$$
Hence there is a spherical ring $A_m=\left\{y\in {\Bbb R}^n:
\rho_m<|y-f_m(x_0)|<\rho_m^{\,*}\right\}$ in the ring domain
$\frak{R}_m$ for every $m=1,2,\ldots\,.$ Since $f$ is not locally
constant at $x_0,$ we can find a point $x^{\,\prime}$ in the ball
$|x-x_0|<r$ with $f(x_0)\ne f(x^{\,\prime}).$ The ring $A_m$
separates $f_m(x_0)$ and $f_m(x^{\,\prime})$ from $f_m(x_*)$ and,
thus, $|f_m(x^{\,\prime})-f_m(x_0)|\le \rho_m$ and
$|f_m(x_*)-f_m(x_0)|\ge \rho_m^{\,*}.$ Consequently,
$|f_m(x^{\,\prime})-f_m(x_0)|\le |f_m(x_*)-f_m(x_0)|$ for all
$m=1,2,\ldots\,.$ Under $m\rightarrow\infty$ we have then
$0<|f(x^{\,\prime})-f(x_0)|\le |f(x_*)-f(x_0)|$ and hence $f(x_*)\ne
f(x_0).$

It remains to show that the set $V$ is empty. Let us assume that $V$
has a nonempty component $V_0.$ Then $f(x)\equiv z$ for every $x\in
V_0$ and some $z\in \overline{{\Bbb R}^n}.$ Note that $\partial
V_0\cap D\ne \varnothing$ by connectedness of $D$ because
$f\not\equiv const$ in $D$ and the set $D\setminus \overline{V_0}$
is also open. If $x_0\in
\partial V_0\cap D,$ then by continuity $f(x_0)=z$ contradicting the
first part of the proof because $x_0\in D\setminus V.$

\medskip
Thus, we have proved that the mapping $f$ is injective if $f$ is not
constant. But $f$ is continuous as a locally uniform limit of
continuous mappings $f_m,$ see Theorem 13.VI.3 in \cite{Ku$_1$}, and
then by Corollary \ref{cor3.1} $f$ is a homeomorphism. Finally, by
Remark \ref{rmkCOM3.1} $f(D)\subset{\Bbb R}^n$ and the proof is
complete.
\end{proof}$\Box$

\medskip
\begin{lemma}\label{lem7.2}
{\sl\,Let $D$ be a domain in ${\Bbb R}^n,$ $n\ge 2,$ $Q_m:
D\rightarrow (0, \infty)$ be measurable functions, $f_m,$
$m=1,2,\ldots,$ be a sequence of ring $Q_m$--ho\-meo\-mor\-phisms of
$D$ into ${\Bbb R}^n$ converging locally uniformly to a mapping $f.$
Suppose
\begin{equation}\label{eq6.3.24.A}
\int\limits_{\varepsilon<|x-x_0|<\varepsilon_0}
Q_m(x)\cdot\psi^n(|x-x_0|) \ dm(x)\ =\ o(I^{n}(\varepsilon,
\varepsilon_0))\quad\forall\ x_0\in D
\end{equation}
where $o(I^{n}(\varepsilon, \varepsilon_0))/I^{n}(\varepsilon,
\varepsilon_0)\to 0$ as $\varepsilon\rightarrow 0$ uniformly with
respect to $m$ for $\varepsilon_0<{\rm dist}\,(x_0,
\partial D)$ and a measurable function $\psi(t):(0,
\varepsilon_0)\rightarrow [0, \infty]$ such that
\begin{equation}\label{eq6.3.25A}
0<I(\varepsilon,
\varepsilon_0):=\int\limits_{\varepsilon}^{\varepsilon_0}\psi(t)\,\,dt
< \infty\quad\forall\,\varepsilon\in(0, \varepsilon_0)\ .
\end{equation}
Then the mapping $f$ is either a constant in $\overline{{\Bbb R}^n}$
or a homeomorphism into ${\Bbb R}^n.$}
\end{lemma}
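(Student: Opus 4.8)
The plan is to reduce Lemma~\ref{lem7.2} to Lemma~\ref{lem7.1} by showing that the hypotheses~(\ref{eq6.3.24.A})--(\ref{eq6.3.25A}) on the functions $Q_m$ force the uniform blow-up of the conformal moduli ${\rm mod}\,f_m(A(x_0,r_k,R_k))$ along suitable sequences $r_k<R_k$ with $R_k\to 0$. Since each $f_m$ is a ring $Q_m$--homeomorphism at $x_0$, for any $0<r<R<{\rm dist}(x_0,\partial D)$ and any admissible $\eta$ on $(r,R)$ with $\int_r^R\eta(t)\,dt\ge 1$ we have, by~(\ref{1.8}),
$$
M\bigl(\Gamma(f_m(S(x_0,r)),f_m(S(x_0,R)))\bigr)\ \le\ \int_{A(x_0,r,R)}Q_m(x)\,\eta^n(|x-x_0|)\,dm(x)\,.
$$
The standard choice is $\eta(t)=\psi(t)/I(r,R)$, which is admissible by~(\ref{eq6.3.25A}); this gives
$$
M\bigl(\Gamma(f_m(S(x_0,r)),f_m(S(x_0,R)))\bigr)\ \le\ \frac{1}{I^{\,n}(r,R)}\int_{A(x_0,r,R)}Q_m(x)\,\psi^n(|x-x_0|)\,dm(x)\,.
$$

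Next I would convert this modulus bound into a lower bound for the conformal modulus of the image ring. Writing $\mathfrak{R}_m=f_m(A(x_0,r,R))=R(C_m^1,C_m^2)$, we have by the equality of the modulus on the ring and on all of $\overline{{\Bbb R}^n}$ (quoted in the Introduction) that $M(\Gamma(f_m(S(x_0,r)),f_m(S(x_0,R))))={\rm cap}\,\mathfrak{R}_m=M(\Gamma(C_m^1,C_m^2))$, and hence by the definition of the conformal modulus,
$$
{\rm mod}\,f_m(A(x_0,r,R))\ =\ \left(\frac{\omega_{n-1}}{M(\Gamma(C_m^1,C_m^2))}\right)^{1/(n-1)}\ \ge\ \left(\frac{\omega_{n-1}\,I^{\,n}(r,R)}{\displaystyle\int_{A(x_0,r,R)}Q_m(x)\,\psi^n(|x-x_0|)\,dm(x)}\right)^{1/(n-1)}\,.
$$
Now fix $x_0\in D$ and $\varepsilon_0<{\rm dist}(x_0,\partial D)$. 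For a sequence $r_k\to 0$ (and $R_k:=\varepsilon_0$ fixed, or $R_k\to 0$ slowly if one prefers $R_k\to 0$) the hypothesis~(\ref{eq6.3.24.A}) says precisely that the denominator is $o(I^{\,n}(r_k,\varepsilon_0))$ uniformly in $m$, so the right-hand side tends to $\infty$ as $k\to\infty$ uniformly with respect to $m$. This verifies the modular hypothesis of Lemma~\ref{lem7.1} (one takes $R_k\to 0$ by, e.g., replacing $\varepsilon_0$ with a sequence $\varepsilon_0^{(k)}\downarrow 0$ along which~(\ref{eq6.3.24.A}) still gives the uniform estimate, using that $I(\cdot,\cdot)$ is finite and positive and monotone).

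With the hypotheses of Lemma~\ref{lem7.1} in force, its conclusion is exactly the desired dichotomy: $f$ is either constant in $\overline{{\Bbb R}^n}$ or a homeomorphism of $D$ into ${\Bbb R}^n$. The main technical point to be careful about is the bookkeeping on the radii: Lemma~\ref{lem7.1} requires $R_k\to 0$, whereas it is most natural to work with $R_k=\varepsilon_0$ fixed in~(\ref{eq6.3.24.A}); I would handle this by noting that for each fixed small $\rho>0$ the same argument with $\varepsilon_0$ replaced by $\rho$ gives ${\rm mod}\,f_m(A(x_0,r,\rho))\to\infty$ uniformly in $m$ as $r\to 0$, and then choose $R_k\to 0$ and $r_k\in(0,R_k)$ by a diagonal selection so that ${\rm mod}\,f_m(A(x_0,r_k,R_k))\to\infty$ uniformly in $m$. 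A second routine check is that $\eta=\psi/I(r,R)$ is Lebesgue measurable and admissible, which is immediate from~(\ref{eq6.3.25A}). Everything else is a direct citation of the modulus identities from the Introduction and of Lemma~\ref{lem7.1}.
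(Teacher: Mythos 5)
You follow the same route as the paper: test the definition of ring $Q_m$--homeomorphism with $\eta=\psi/I$, turn the resulting modulus estimate into ${\rm mod}\,f_m(A(x_0,r_k,R_k))\to\infty$ uniformly in $m$, and conclude by Lemma \ref{lem7.1}. The one step that is genuinely unjustified is exactly the one you postpone to the end: passing from the fixed outer radius $\varepsilon_0$ in (\ref{eq6.3.24.A}) to outer radii $R_k\to 0$. Your stated reason --- that $I(\cdot,\cdot)$ is \emph{finite, positive and monotone} --- does not accomplish this. Condition (\ref{eq6.3.25A}) only gives $I(\varepsilon,\varepsilon_0)>0$; for $\rho<\varepsilon_0$ nothing said so far prevents $\psi$ from vanishing a.e.\ on $(0,\rho)$, in which case $I(r,\rho)=0$, the function $\eta=\psi/I(r,\rho)$ is not even defined, and your lower bound for ${\rm mod}\,f_m(A(x_0,r,\rho))$ disappears. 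Moreover, even when $I(r,\rho)>0$, monotonicity only gives $I(r,\rho)\le I(r,\varepsilon_0)$, which is the wrong direction: to convert the hypothesis $o(I^n(r,\varepsilon_0))$ into $o(I^n(r,\rho))$ you need the reverse comparison $I(r,\varepsilon_0)\le C\,I(r,\rho)$ with $C$ independent of $r$ and $m$.

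The missing ingredient is the observation with which the paper's proof begins: since $Q_m>0$ and (\ref{eq6.3.25A}) holds, the integral on the left of (\ref{eq6.3.24.A}) is positive and nondecreasing as $\varepsilon$ decreases, so the uniform $o$-condition forces $I(\varepsilon,\varepsilon_0)\to\infty$ as $\varepsilon\to 0$, and therefore $I(\varepsilon,\rho)=I(\varepsilon,\varepsilon_0)-I(\rho,\varepsilon_0)\to\infty$ for every fixed $\rho\in(0,\varepsilon_0)$. Consequently $I^n(\varepsilon,\varepsilon_0)=\left(1+I(\rho,\varepsilon_0)/I(\varepsilon,\rho)\right)^n I^n(\varepsilon,\rho)$ with a factor bounded as $\varepsilon\to 0$ and independent of $m$ (this is precisely the factor $\alpha_{\varepsilon,k}$ in the paper's proof, where the outer radii $\varepsilon_k\to 0$ are fixed in advance and then $\varepsilon_k^{\,*}\in(0,\varepsilon_k)$ is chosen so that the image moduli are at most $2^{-k}$ for all $m$). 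With this divergence argument inserted, your diagonal choice of $r_k<R_k\to 0$ goes through and your proof becomes correct; it is then essentially identical to the paper's.
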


\begin{remark}\label{remOS2.1.z} In particular, the conclusion of Lemma \ref{lem7.2} holds for
$Q$-ho\-meo\-mor\-phisms $f_m$  with a measurable function $Q:
D\rightarrow (0, \infty)$ such that
\begin{equation}\label{eq6.3.24}
\int\limits_{\varepsilon<|x-x_0|<\varepsilon_0}
Q(x)\cdot\psi^n(|x-x_0|) \ dm(x)=o(I^{n}(\varepsilon,
\varepsilon_0))\quad\quad\forall\ x_0\in D\ .
\end{equation}
\end{remark}

\begin{proof} By Lusin theorem there exists a Borel function $\psi_*(t)$
such that $\psi(t)=\psi_*(t)$ for a.e. $t\in (0, \varepsilon_0)$,
see e.g. 2.3.6 in \cite{Fe}. Since $Q_m(x)>0$ for all $x\in D$ we
have from (\ref{eq6.3.24.A}) that $I(\varepsilon, a)\rightarrow
\infty$ for every fixed $a\in (0, \varepsilon_0)$ and, in
particular, $I(\varepsilon, a)>0$ for every $\varepsilon\in (0, b)$
and some $b=b(a)\in (0, a).$ Given $x_0\in D$ and a sequence of such
numbers $b=\varepsilon_k\rightarrow 0$ as $k\rightarrow \infty,$
$k=1,2,\ldots ,$ consider a sequence of the Borel measurable
functions $\rho_{\varepsilon, k}$ defined as
$$\rho_{\varepsilon, k}(x)=
\left\{
\begin{array}{rr}
\psi_*(|x-x_0|)/I(\varepsilon, \varepsilon_k), &   \varepsilon<|x-x_0|<\varepsilon_k,\\
0,  &  {\rm otherwise}\,.
\end{array}
\right. $$
Note that the function $\rho_{\varepsilon, k}(x)$ is admissible for
$$\Gamma_{\varepsilon, k}:=\Gamma(S(x_0, \varepsilon), S(x_0,
\varepsilon_k), A(x_0, \varepsilon, \varepsilon_k) )$$
because
$$\int\limits_{\gamma}\rho_{\varepsilon, k}(x)|dx|\ge
\frac{1}{I(\varepsilon,
\varepsilon_k)}\int\limits_{\varepsilon}^{\varepsilon_k}\psi(t)dt=1$$
for all (locally rectifiable) curves $\gamma\in \Gamma_{\varepsilon,
k}$ (see Theorem 5.7 in \cite{Va}). Then by definition of ring
$Q$--homeomorphisms
\begin{equation}\label{eq7.2}
M(f_m(\Gamma_{\varepsilon, k}))\quad\le\quad
\frac{1}{I^n(\varepsilon, \varepsilon_k)} \int
\limits_{\varepsilon<|x-x_0|<\varepsilon_0}
Q(x)\cdot\psi^{n}(|x-x_0|)\,dm(x)
\end{equation}
for all $m\in {\Bbb N}.$ Note that $\frac{1}{I^n(\varepsilon,
\varepsilon_k)}=\alpha_{\varepsilon, k}\cdot
\frac{1}{I^{n}(\varepsilon, \varepsilon_0)},$ where
$\alpha_{\varepsilon, k}:=\left(1+\frac{I(\varepsilon_k,
\varepsilon_0)}{I(\varepsilon, \varepsilon_k)}\right)^n$ is
independent on $m$ and bounded as $\varepsilon\rightarrow 0.$ Then
it follows from (\ref{eq6.3.24.A}) and (\ref{eq7.2}) that there
exists $\varepsilon^{\,*}_k\in (0, \varepsilon_k)$ such that for all
%
$$M(f_m(\Gamma_{\varepsilon^{\,*}_k, k}))\quad\le\quad \frac{1}{2^{k}}\qquad \forall
\,m\in {\Bbb N}\,.$$
%
Applying Lemma \ref{lem7.1} we obtain a desired conclusion.
\end{proof} $\Box$

\medskip
The following important statements follow just from Lemma
\ref{lem7.2}.

\medskip
\begin{theorem}{}\label{th6.6.1A}{\sl\,
Let $D$ be a domain in ${\Bbb R}^n,$ $n\ge 2,$ $Q: D\rightarrow (0,
\infty)$ a Lebesgue measurable function and let $f_m,$
$m=1,2,\ldots,$ be a sequence of ring $Q$--ho\-me\-o\-mor\-phisms of
$D$ into ${\Bbb R}^n$ converging locally uniformly to a mapping $f.$
Suppose that $Q\in$ FMO. Then the mapping $f$ is either a constant
in $\overline{{\Bbb R}^n}$ or a homeomorphism into ${\Bbb R}^n.$}
\end{theorem}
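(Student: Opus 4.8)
The plan is to deduce Theorem~\ref{th6.6.1A} from Lemma~\ref{lem7.2} (via Remark~\ref{remOS2.1.z}) by choosing the specific test function
$$\psi(t)\ =\ \frac{1}{t\,\log\frac1t}$$
on $(0,\varepsilon_0)$ for a sufficiently small $\varepsilon_0\le{\rm dist}(x_0,\partial D)$, and verifying the two hypotheses \eqref{eq6.3.25A} and \eqref{eq6.3.24} of the lemma. This is the standard ``logarithmic kernel'' choice already signalled by Lemma~\ref{lem6}, so the proof should be short.

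First I would record the elementary computation
$$I(\varepsilon,\varepsilon_0)\ =\ \int\limits_{\varepsilon}^{\varepsilon_0}\frac{dt}{t\,\log\frac1t}\ =\ \log\frac{\log\frac1\varepsilon}{\log\frac1{\varepsilon_0}}\ \asymp\ \log\log\frac1\varepsilon$$
as $\varepsilon\to0$, provided $\varepsilon_0<1$, say $\varepsilon_0<e^{-1}$, so that $\log\frac1t>0$ on the whole interval; this gives $0<I(\varepsilon,\varepsilon_0)<\infty$ for every $\varepsilon\in(0,\varepsilon_0)$ and $I(\varepsilon,\varepsilon_0)\to\infty$, so \eqref{eq6.3.25A} holds. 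Next, since $Q\in{\rm FMO}$, the function $Q$ has finite mean oscillation at each point $x_0\in D$; after the translation $x\mapsto x-x_0$ we are exactly in the situation of Lemma~\ref{lem6}, which yields
$$\int\limits_{\varepsilon<|x-x_0|<\varepsilon_0}\frac{Q(x)\,dm(x)}{\bigl(|x-x_0|\log\frac{1}{|x-x_0|}\bigr)^n}\ =\ \int\limits_{\varepsilon<|x-x_0|<\varepsilon_0}Q(x)\,\psi^n(|x-x_0|)\,dm(x)\ =\ O\!\left(\log\log\frac1\varepsilon\right)$$
as $\varepsilon\to0$. Comparing this with $I^n(\varepsilon,\varepsilon_0)\asymp\bigl(\log\log\frac1\varepsilon\bigr)^n$, we see that the left-hand side is $O\bigl((\log\log\frac1\varepsilon)^{\,1}\bigr)=o\bigl((\log\log\frac1\varepsilon)^{\,n}\bigr)=o(I^n(\varepsilon,\varepsilon_0))$ because $n\ge2$; thus \eqref{eq6.3.24} holds at every $x_0\in D$.

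There is one point that needs a little care, namely the uniformity in $m$ demanded in \eqref{eq6.3.24.A}: here the function $Q$ does not depend on $m$ at all, so the estimate $\int Q\,\psi^n\,dm=o(I^n(\varepsilon,\varepsilon_0))$ is automatically uniform with respect to $m$, and the same $\varepsilon_0$ works for all $m$. Hence all hypotheses of Lemma~\ref{lem7.2} (in the form of Remark~\ref{remOS2.1.z}) are met, and the conclusion — $f$ is either a constant in $\overline{{\Bbb R}^n}$ or a homeomorphism of $D$ into ${\Bbb R}^n$ — follows immediately. I do not anticipate a genuine obstacle; the only thing to watch is the choice $\varepsilon_0<1$ (indeed $\varepsilon_0<e^{-1}$) guaranteeing positivity of $\log\frac1t$ and hence of $\psi$ on the relevant annuli, together with the bookkeeping that $\psi$ is Borel (or, as in the proof of Lemma~\ref{lem7.2}, may be replaced by a Borel representative off a null set) so that $I(\varepsilon,\varepsilon_0)$ is well defined and finite.
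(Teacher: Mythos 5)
Your proposal is correct and follows essentially the same route as the paper: the choice $\psi(t)=\frac{1}{t\log\frac1t}$ with $\varepsilon_0<\min\{{\rm dist}(x_0,\partial D),e^{-1}\}$, the estimate from Lemma \ref{lem6}, the computation $I(\varepsilon,\varepsilon_0)=\log\frac{\log\frac1\varepsilon}{\log\frac1{\varepsilon_0}}$, and the appeal to Lemma \ref{lem7.2}. You even make explicit the points the paper leaves implicit (that $O(\log\log\frac1\varepsilon)=o(I^n)$ since $n\ge 2$, and that uniformity in $m$ is automatic because $Q$ is fixed), which is fine but not a different argument.
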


\medskip
\begin{proof}
Let $x_0\in D.$ We may consider further that $x_0=0\in D.$ Choosing
a positive $\varepsilon_0<\min\left\{{\rm dist\,}\left(0, \,\partial
D\right),\,\, e^{\,-1}\right\},$ we obtain by Lemma \ref{lem6} for
the function $\psi(t)\,=\,\frac {1}{t\,\log{\frac1t}}$ that
$$\int\limits_{\varepsilon<|x|<\varepsilon_0}Q(x)\cdot\psi^n(|x|)
 \ dm(x)\,=\,O
\left(\log\log \frac{1}{\varepsilon}\right)\,.$$
%
Note that
$I(\varepsilon,
\varepsilon_0)\,:=\,\int\limits_{\varepsilon}^{\varepsilon_0}\psi(t)\,dt\,=\,
\log{\frac{\log{\frac{1}
{\varepsilon}}}{\log{\frac{1}{\varepsilon_0}}}}.$ Now the desired
conclusion follows from Lemma \ref{lem7.2}.
\end{proof}$\Box$

\medskip
The following conclusions can be obtained on the basis of Theorem
\ref{th6.6.1A}, Proposition \ref{cor4} and Corollary \ref{cor5}.

\medskip
\begin{corollary}\label{cor6.6.2A}{\sl\,
In particular, the limit mapping $f$ is either a constant in
$\overline{{\Bbb R}^n}$ or a homeomorphism of $D$ into
${\Bbb R}^n$ 
%
whenever
$$\overline{\lim\limits_{\varepsilon\rightarrow 0}}\ \
 \dashint_{B( x_0 ,\varepsilon)} Q(x)\ \ dm(x) <
 \infty\qquad\qquad\forall\,\,\,x_0
\in D 
$$
or  whenever every $x_0 \in D$ is a Lebesgue point of $Q.$}
\end{corollary}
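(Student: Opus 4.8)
The plan is to deduce everything from Theorem \ref{th6.6.1A}: it suffices to show that each of the two displayed hypotheses forces $Q\in$ FMO$(D)$, after which Theorem \ref{th6.6.1A} applies verbatim and gives the dichotomy (either a constant in $\overline{{\Bbb R}^n}$ or a homeomorphism of $D$ into ${\Bbb R}^n$).

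First I would dispose of the condition $\overline{\lim\limits_{\varepsilon\rightarrow 0}}\ \dashint_{B(x_0,\varepsilon)} Q(x)\, dm(x)<\infty$ for all $x_0\in D$. Fix an arbitrary $x_0\in D$. Since $Q$ is nonnegative, $|Q|=Q$, so this is precisely the hypothesis of Corollary \ref{cor5}, whence $Q$ has finite mean oscillation at $x_0$. (Note that the finiteness of this $\limsup$ also yields $\int_{B(x_0,\varepsilon)}Q\,dm<\infty$ for all sufficiently small $\varepsilon$, i.e. $Q\in L^1_{\rm loc}(D)$, so that the notion of finite mean oscillation is meaningful at $x_0$.) As $x_0\in D$ was arbitrary, $Q\in$ FMO$(D)$, and Theorem \ref{th6.6.1A} gives the conclusion.

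Next I would handle the Lebesgue-point condition. If every $x_0\in D$ is a Lebesgue point of $Q$, then by definition $\dashint_{B(x_0,\varepsilon)}|Q(x)-Q(x_0)|\,dm(x)\to 0$ as $\varepsilon\to 0$; in particular the corresponding $\limsup$ is finite. Choosing in Proposition \ref{cor4} the numbers $\varphi_\varepsilon:=Q(x_0)$, which are independent of $\varepsilon$, we obtain that $Q$ has finite mean oscillation at $x_0$. Since this holds at every point of $D$, again $Q\in$ FMO$(D)$, and Theorem \ref{th6.6.1A} finishes the proof.

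The argument is essentially bookkeeping: the two hypotheses were stated precisely so as to coincide with (the hypotheses of) Corollary \ref{cor5} and Proposition \ref{cor4}, respectively, both of which feed directly into Theorem \ref{th6.6.1A}. Accordingly I do not expect any genuine obstacle; the only point deserving a word of care is the implicit local integrability of $Q$, which, as noted above, is automatic in both cases.
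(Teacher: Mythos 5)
Your proposal is correct and follows the paper's own route: the paper deduces this corollary from Theorem \ref{th6.6.1A} together with Corollary \ref{cor5} (for the bounded-mean-value condition) and Proposition \ref{cor4} with $\varphi_\varepsilon=Q(x_0)$ (for the Lebesgue-point condition), exactly as you do. Your remark on local integrability is a harmless extra observation.
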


\medskip
\begin{theorem}{}\label{th6.6.5A}{\sl\, Let $D$ be a domain in ${\Bbb
R}^n,$ $n\ge 2,$ and let $Q:D \rightarrow (0,\infty)$ be a
measurable function such that
 \begin{equation}\label{eq6.6.6}
\int\limits_{0}^{\varepsilon(x_0)}\frac{dr}{rq_{x_0}^{\frac{1}{n-1}}(r)}=\infty\quad
\quad \forall\,x_0\in D\end{equation} for a positive
$\varepsilon(x_0)< {\rm dist}\, (x_0,
\partial D)$ where $q_{x_0}(r)$ denotes the average of
$Q(x)$ over the sphere $|x-x_0|=r.$ Suppose that $f_m,$
$m=1,2,\ldots,$ is a sequence of ring $Q$--ho\-me\-o\-mor\-phisms
from $D$ into ${\Bbb R}^n$ converging locally uniformly to a mapping
$f.$ Then the mapping $f$ is either a constant in $\overline{{\Bbb
R}^n}$ or a homeomorphism into ${\Bbb R}^n.$ }
\end{theorem}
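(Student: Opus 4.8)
The plan is to reduce Theorem \ref{th6.6.5A} to Lemma \ref{lem7.2} by exhibiting, for every $x_0\in D$, a suitable weight function $\psi$ on $(0,\varepsilon(x_0))$ for which the divergence hypothesis $(\ref{eq6.6.6})$ guarantees both $(\ref{eq6.3.25A})$ and the growth estimate $(\ref{eq6.3.24})$. As usual we may translate so that $x_0=0$, and we write $q(r)=q_0(r)$ for the spherical mean of $Q$ over $|x|=r$. The natural choice, going back to the standard treatment of ring $Q$-homeomorphisms (see \cite{MRSY}), is
$$\psi(t)=\frac{1}{t\,q^{1/(n-1)}(t)}\,.$$
With this $\psi$, the integral $I(\varepsilon,\varepsilon_0)=\int_\varepsilon^{\varepsilon_0}\psi(t)\,dt$ is exactly the left-hand side of $(\ref{eq6.6.6})$ truncated at $\varepsilon$, so $I(\varepsilon,\varepsilon_0)$ is finite for each $\varepsilon>0$ (since $q(t)>0$ a.e. and the integrand is locally integrable away from $0$) while $I(\varepsilon,\varepsilon_0)\to\infty$ as $\varepsilon\to 0$ by hypothesis; this is precisely $(\ref{eq6.3.25A})$ together with the divergence needed by Lemma \ref{lem7.2}.

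The core computation is to estimate $\int_{\varepsilon<|x|<\varepsilon_0}Q(x)\,\psi^n(|x|)\,dm(x)$. Passing to spherical coordinates and using that $\int_{|x|=t}Q(x)\,d\mathcal H^{n-1}=\omega_{n-1}t^{n-1}q(t)$, one gets
$$\int_{\varepsilon<|x|<\varepsilon_0}Q(x)\,\psi^n(|x|)\,dm(x)
=\omega_{n-1}\int_\varepsilon^{\varepsilon_0}t^{n-1}q(t)\,\frac{dt}{t^n\,q^{n/(n-1)}(t)}
=\omega_{n-1}\int_\varepsilon^{\varepsilon_0}\frac{dt}{t\,q^{1/(n-1)}(t)}
=\omega_{n-1}\,I(\varepsilon,\varepsilon_0)\,.$$
Since $I(\varepsilon,\varepsilon_0)=o\!\left(I^n(\varepsilon,\varepsilon_0)\right)$ as $\varepsilon\to 0$ — because $I(\varepsilon,\varepsilon_0)\to\infty$ and $n\ge 2$, so $I/I^n=I^{1-n}\to 0$ — the hypothesis $(\ref{eq6.3.24})$ of Remark \ref{remOS2.1.z} (equivalently, $(\ref{eq6.3.24.A})$ with $Q_m\equiv Q$, the bound being automatically uniform in $m$) is satisfied. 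Invoking Lemma \ref{lem7.2} then finishes the proof: $f$ is a constant in $\overline{{\Bbb R}^n}$ or a homeomorphism into ${\Bbb R}^n$.

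The one point requiring a little care — and the place I expect a referee to look — is the measurability and local integrability of $\psi$, i.e. that $r\mapsto q(r)$ is a measurable function on $(0,\varepsilon_0)$ which is positive and finite for a.e. $r$, so that $\psi(t)=\big(t\,q^{1/(n-1)}(t)\big)^{-1}$ is a legitimate measurable function $(0,\varepsilon_0)\to[0,\infty]$ with $0<I(\varepsilon,\varepsilon_0)<\infty$. Measurability of $q$ follows from Fubini's theorem applied to $Q\in L^1_{\rm loc}$, and the finiteness of $\int_\varepsilon^{\varepsilon_0}t^{n-1}q(t)\,dt=\omega_{n-1}^{-1}\int_{\varepsilon<|x|<\varepsilon_0}Q\,dm<\infty$ forces $q(t)<\infty$ for a.e. $t$; positivity a.e. is immediate from $Q>0$. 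If one wishes, one also notes that the spherical means may be replaced by the essential infimum of $Q$ on spheres to get the sharper version, but the mean-value argument above is already enough. After applying the Lusin-type reduction to a Borel representative (as already done inside the proof of Lemma \ref{lem7.2}) there is nothing further to check, so the argument is complete.
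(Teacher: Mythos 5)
Your choice of $\psi(t)=1/\bigl(t\,q_{x_0}^{1/(n-1)}(t)\bigr)$, the spherical--coordinate identity $\int_{\varepsilon<|x-x_0|<\varepsilon_0}Q(x)\psi^n(|x-x_0|)\,dm(x)=\omega_{n-1}I(\varepsilon,\varepsilon_0)$, and the observation $I=o(I^n)$ because $I\to\infty$ are exactly the paper's reduction to Lemma \ref{lem7.2}. The gap is in your verification of (\ref{eq6.3.25A}), i.e. of $0<I(\varepsilon,\varepsilon_0)<\infty$. Finiteness does not follow from ``$q(t)>0$ a.e.\ and local integrability away from $0$'': first, $Q$ is only assumed measurable and positive, not locally integrable, so your Fubini argument for $q(t)<\infty$ a.e.\ has no basis; second, and more importantly, even if $0<q(t)<\infty$ for a.e.\ $t$, the function $t\mapsto 1/\bigl(t\,q^{1/(n-1)}(t)\bigr)$ can fail to be integrable on $(\varepsilon,\varepsilon_0)$ --- for instance $q(t)\asymp|t-t_1|^{2(n-1)}$ near an interior radius $t_1$ gives $\psi(t)\asymp|t-t_1|^{-2}$ there. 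The hypothesis (\ref{eq6.6.6}) only prescribes divergence at $0$ and controls nothing away from $0$. If $I(\varepsilon,\varepsilon_0)=\infty$ for some $\varepsilon$, your own identity makes the left-hand side of (\ref{eq6.3.24}) infinite as well, so the hypotheses of Lemma \ref{lem7.2} fail and the argument stops.

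The paper closes precisely this gap by using the mappings themselves rather than $Q$ alone: by the inequality form of the ring condition (Theorem 3.15 in \cite{RS$_1$}) one has $M\bigl(f_m(\Gamma(S(x_0,\varepsilon),S(x_0,\varepsilon_0),A(x_0,\varepsilon,\varepsilon_0)))\bigr)\le\omega_{n-1}/I^{n-1}(\varepsilon,\varepsilon_0)$, while by Lemma 1.15 in \cite{Na} the modulus of the family of curves joining the two boundary components of the nondegenerate ring domain bounded by the images of the spheres is positive; together these force $I(\varepsilon,\varepsilon_0)<\infty$. (Positivity of $I$ is then secured from (\ref{eq6.6.6}) after shrinking $\varepsilon_0$ to a suitable $\varepsilon_*$, another small point you wave through.) So your route is the paper's route, but the one step you yourself flagged as ``requiring a little care'' is exactly the step that needs this extra modulus-theoretic input, and your justification of it is incorrect as written.
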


\medskip
\begin{proof}
Fix $x_0\in D$  and set
$I=I(\varepsilon, \varepsilon_0)=\int\limits
_{\varepsilon}^{\varepsilon_0}\psi(t)\,dt,$
$\varepsilon\in(0,\varepsilon_0)$, where
$$\psi(t)\quad=\quad \left \{\begin{array}{rr}
1/[tq^{\frac{1}{n-1}}_{x_0}(t)]\ , & \ t\in (\varepsilon,
\varepsilon_0)\ ,
\\ 0\ ,  &  \ t\notin (\varepsilon,
\varepsilon_0)\ .
\end{array} \right.$$
%
Note that $I(\varepsilon, \varepsilon_0)<\infty$ for every
$\varepsilon\in (0, \varepsilon_0).$ Indeed, by Theorem 3.15 in
\cite{RS$_1$} on the criterion of ring $Q-$homeomorphisms, we have
that
\begin{equation}\label{eq7.4}
M(f(\Gamma(S(x_0, \varepsilon), S(x_0, \varepsilon_0), A(x_0,
\varepsilon, \varepsilon_0))))\le\frac{\omega_{n-1}}{I^{n-1}}\,.
\end{equation}
On the other hand, by Lemma 1.15 in \cite{Na}, we see that
$$M(\Gamma(f(S(x_0, \varepsilon)), f(S(x_0, \varepsilon_0)), f(A(x_0, \varepsilon,
\varepsilon_0)))>0\,.$$  Then it follows from (\ref{eq7.4}) that
$I<\infty$ for every $\varepsilon\in (0, \varepsilon_0).$ In view of
(\ref{eq6.6.6}), we obtain that $I(\varepsilon, \varepsilon_*)>0$
for all $\varepsilon\in (0, \varepsilon_*)$ with some
$\varepsilon_*\in (0, \varepsilon_0).$ Finally, simple calculations
show that (\ref{eq6.3.24}) holds, in fact,
$$\int\limits_{\varepsilon<|x-x_0|<\varepsilon_*} Q(x)\cdot\psi^n(|x-x_0|)\
dm(x)\ =\ \omega_{n-1}\cdot I(\varepsilon, \varepsilon_*)$$ and
$I(\varepsilon,\varepsilon_*)=o\left(I^n(\varepsilon,\varepsilon_*)\right)$
by (\ref{eq6.6.6}). The rest follows by Lemma \ref{lem7.2}.
\end{proof}$\Box$

\medskip
\begin{corollary}\label{cor4.1}
{\sl \, In particular, the conclusion of Theorem \ref{th6.6.5A}
holds if $$q_{x_0}(r)=O\left(\log^{n-1}\frac{1}{r}\right)\qquad
\forall \,x_0\in D\,.$$}
\end{corollary}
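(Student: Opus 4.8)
The goal is to deduce Corollary \ref{cor4.1} from Theorem \ref{th6.6.5A}, so the plan is simply to verify that the divergence hypothesis (\ref{eq6.6.6}) is automatic under the growth condition $q_{x_0}(r)=O\bigl(\log^{n-1}\tfrac{1}{r}\bigr)$.

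\textbf{Plan.}
First I would fix an arbitrary point $x_0\in D$ and choose a positive $\varepsilon(x_0)<{\rm dist}\,(x_0,\partial D)$ small enough that $\varepsilon(x_0)<1$; shrinking further if necessary I can assume $\varepsilon(x_0)\le e^{-1}$ so that $\log\tfrac{1}{r}\ge 1$ for all $r\in(0,\varepsilon(x_0))$ and all quantities below are positive. By the hypothesis there is a constant $C=C(x_0)>0$ with $q_{x_0}(r)\le C\log^{n-1}\tfrac{1}{r}$ for a.e. $r\in(0,\varepsilon(x_0))$; hence $q_{x_0}^{1/(n-1)}(r)\le C^{1/(n-1)}\log\tfrac{1}{r}$, and therefore
$$
\frac{1}{r\,q_{x_0}^{1/(n-1)}(r)}\ \ge\ \frac{1}{C^{1/(n-1)}}\cdot\frac{1}{r\,\log\tfrac{1}{r}}\,.
$$
Integrating this pointwise lower bound over $(0,\varepsilon(x_0))$ reduces matters to the elementary fact that $\int_0^{\varepsilon(x_0)}\tfrac{dr}{r\log(1/r)}=\infty$, which one sees by the substitution $u=\log\tfrac{1}{r}$ turning the integral into $\int_{\log(1/\varepsilon(x_0))}^{\infty}\tfrac{du}{u}=\infty$. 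Thus (\ref{eq6.6.6}) holds at $x_0$, and since $x_0\in D$ was arbitrary it holds for every $x_0\in D$.

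\textbf{Conclusion.}
With (\ref{eq6.6.6}) verified for all $x_0\in D$, Theorem \ref{th6.6.5A} applies verbatim: for any sequence $f_m$ of ring $Q$--homeomorphisms of $D$ into ${\Bbb R}^n$ converging locally uniformly to a mapping $f$, the limit $f$ is either a constant in $\overline{{\Bbb R}^n}$ or a homeomorphism of $D$ into ${\Bbb R}^n$. This gives the corollary.

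\textbf{Main obstacle.}
There is essentially no obstacle here: the only subtlety is the bookkeeping needed to ensure the constants and logarithms are positive on the relevant interval (which is why one shrinks $\varepsilon(x_0)$ below $e^{-1}$), and to note that the ``$O$'' in the hypothesis is understood as $r\to 0$ so that the bound $q_{x_0}(r)\le C\log^{n-1}\tfrac{1}{r}$ is valid on a full one-sided neighbourhood of $0$; if it only holds near $0$ one simply takes $\varepsilon(x_0)$ small enough that it does. Everything else is the elementary divergence of $\int_0 \tfrac{dr}{r\log(1/r)}$ together with a direct comparison.
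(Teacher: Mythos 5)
Your proof is correct and is exactly the argument the paper leaves implicit: the bound $q_{x_0}(r)\le C\log^{n-1}\frac{1}{r}$ gives $\frac{1}{r\,q_{x_0}^{1/(n-1)}(r)}\ge \frac{1}{C^{1/(n-1)}\,r\log\frac{1}{r}}$, whose integral near $0$ diverges, so condition (\ref{eq6.6.6}) holds and Theorem \ref{th6.6.5A} applies. No discrepancies with the paper's approach.
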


\medskip
\begin{corollary}\label{cor6.6.7A}{\sl\, Under assumptions of
Theorem \ref{th6.6.5A}, the mapping $f$ is either a constant in
$\overline{{\Bbb R}^n}$ or a homeomorphism into ${\Bbb R}^n$
provided $Q(x)$ has singularities only of the logarithmic type of
the order which is not more than $n-1$ at every point $x_0\in D.$}
\end{corollary}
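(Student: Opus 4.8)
The plan is to reduce the statement immediately to Corollary~\ref{cor4.1}. The hypothesis that ``$Q$ has a singularity only of logarithmic type of order at most $n-1$ at a point $x_0$'' should be read as the existence of constants $C=C(x_0)>0$ and $\delta=\delta(x_0)\in(0,\,{\rm dist}\,(x_0,\partial D))$ with
$$Q(x)\ \le\ C\left(\log\frac{1}{|x-x_0|}\right)^{n-1}\qquad\text{for a.e. }x\in B(x_0,\delta)\setminus\{x_0\}\,;$$
points at which $Q$ is merely essentially bounded near $x_0$ are included as the degenerate case of this inequality after shrinking $\delta$ so that $\log\frac1\delta\ge1$. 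First I would fix $x_0\in D$, translate so that $x_0=0$, and arrange $\log\frac1\delta>1$, so that $r\mapsto\log\frac1r$ is positive on $(0,\delta)$.

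The second step is to transfer this pointwise bound to the spherical means $q_{x_0}(r)$. Since $q_{x_0}(r)$ is the average of $Q$ over the sphere $|x-x_0|=r$ and the right-hand side above depends only on $|x-x_0|$, integrating over $S(x_0,r)$ and dividing by its area gives, for a.e. $r\in(0,\delta)$,
$$q_{x_0}(r)\ \le\ C\left(\log\frac1r\right)^{n-1}\,,$$
i.e. $q_{x_0}(r)=O\!\left(\log^{n-1}\frac1r\right)$ as $r\to0$. As this holds at every $x_0\in D$, Corollary~\ref{cor4.1} applies and gives precisely the desired dichotomy for $f$.

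If one wishes to avoid citing Corollary~\ref{cor4.1} and argue straight from Theorem~\ref{th6.6.5A}, the same mean bound shows that the divergence condition (\ref{eq6.6.6}) holds: from $q_{x_0}(r)\le C(\log\frac1r)^{n-1}$ one gets $q_{x_0}^{1/(n-1)}(r)\le C^{1/(n-1)}\log\frac1r$ on $(0,\delta)$, whence
$$\int\limits_0^{\varepsilon(x_0)}\frac{dr}{r\,q_{x_0}^{1/(n-1)}(r)}\ \ge\ C^{-1/(n-1)}\int\limits_0^{\varepsilon(x_0)}\frac{dr}{r\log\frac1r}\ =\ \infty$$
for any $\varepsilon(x_0)\in(0,\delta]$, because an antiderivative of $1/(r\log\frac1r)$ is $-\log\log\frac1r\to+\infty$ as $r\to0$. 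Theorem~\ref{th6.6.5A} then yields the conclusion.

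The only delicate point --- and the main obstacle --- is fixing the precise meaning of the hypothesis: one must agree that ``logarithmic singularity of order $\le n-1$'' is exactly an essential majorization $Q(x)\le C\,(\log\frac{1}{|x-x_0|})^{n-1}$ in a punctured neighborhood of $x_0$, and one should remark that nothing needs to be assumed at points where $Q$ is locally bounded. Granting that, the proof is the elementary averaging estimate above combined with the divergence of $\int_0 dr/(r\log\frac1r)$; no new modulus estimates are required.
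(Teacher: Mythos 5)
Your proposal is correct and follows exactly the route the paper intends: the corollary is stated without a separate proof precisely because ``logarithmic singularities of order at most $n-1$'' means $Q(x)\le C\left(\log\frac{1}{|x-x_0|}\right)^{n-1}$ near $x_0$, which after averaging over spheres gives $q_{x_0}(r)=O\left(\log^{n-1}\frac1r\right)$ and hence reduces the statement to Corollary \ref{cor4.1}, i.e. to the divergence of $\int_0 \frac{dr}{r\log\frac1r}$ in condition (\ref{eq6.6.6}) of Theorem \ref{th6.6.5A}. Your explicit verification of that divergence and your remark about points where $Q$ is locally bounded are exactly the (implicit) argument of the paper.
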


\medskip
\begin{theorem}\label{th4.1}{\sl\, Let $D$ be a domain in ${\Bbb
R}^n,$ $n\ge 2,$ and $Q:D \rightarrow (0,\infty)$ be a measurable
function such that
\begin{equation}\label{eq4.2}
\int\limits_{\varepsilon<|x-x_0|<\varepsilon_0}\frac{Q(x)}{|x-x_0|^n}\,dm(x)=
o\left(\log^n\frac{1}{\varepsilon}\right)\qquad\forall\, x_0\in D
\end{equation}
as $\varepsilon\rightarrow 0$ for some positive number
$\varepsilon_0=\varepsilon(x_0)<{\rm dist}\,(x_0,
\partial D).$
Suppose that $f_m,$ $m=1,2,\ldots,$ is a sequence of ring
$Q$--ho\-me\-o\-mor\-phisms from $D$ into $\overline{{\Bbb R}^n}$
converging locally uniformly to a mapping $f.$ Then the limit
mapping $f$ is either a constant in $\overline{{\Bbb R}^n}$ or a
homeomorphism into ${\Bbb R}^n.$
}
\end{theorem}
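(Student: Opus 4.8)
The plan is to derive the theorem directly from Remark~\ref{remOS2.1.z} (that is, from Lemma~\ref{lem7.2} specialized to $Q_m\equiv Q$, in which case the uniformity in $m$ becomes vacuous) by making the natural choice of weight $\psi(t)=1/t$.

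First I would fix an arbitrary point $x_0\in D$ and let $\varepsilon_0=\varepsilon(x_0)<{\rm dist}\,(x_0,\partial D)$ be the number furnished by~(\ref{eq4.2}); shrinking it if necessary we may also assume $\varepsilon_0<1$. Put $\psi(t)=1/t$ for $t\in(0,\varepsilon_0)$. Then $\psi^n(|x-x_0|)=|x-x_0|^{-n}$ and
$$I(\varepsilon,\varepsilon_0)\ :=\ \int\limits_{\varepsilon}^{\varepsilon_0}\psi(t)\,dt\ =\ \log\frac{\varepsilon_0}{\varepsilon}\,,$$
so that $0<I(\varepsilon,\varepsilon_0)<\infty$ for every $\varepsilon\in(0,\varepsilon_0)$; in particular condition~(\ref{eq6.3.25A}) holds for this $\psi$.

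Next I would invoke the elementary identity $I(\varepsilon,\varepsilon_0)=\log\frac{1}{\varepsilon}-\log\frac{1}{\varepsilon_0}$, whence $I^n(\varepsilon,\varepsilon_0)\big/\log^n\frac{1}{\varepsilon}\to 1$ as $\varepsilon\to 0$. Therefore the hypothesis~(\ref{eq4.2}) is precisely the statement that
$$\int\limits_{\varepsilon<|x-x_0|<\varepsilon_0}Q(x)\cdot\psi^n(|x-x_0|)\,dm(x)\ =\ o\left(I^n(\varepsilon,\varepsilon_0)\right)\qquad\text{as }\varepsilon\to 0\,,$$
i.e. condition~(\ref{eq6.3.24}) of Remark~\ref{remOS2.1.z} holds at $x_0$ for this $\psi$. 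Since $x_0\in D$ was arbitrary, (\ref{eq6.3.24}) holds at every point of $D$, and hence by Remark~\ref{remOS2.1.z} the limit mapping $f$ is either a constant in $\overline{{\Bbb R}^n}$ or a homeomorphism into ${\Bbb R}^n$, which is the assertion.

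There is no genuine obstacle here: the statement is essentially a reformulation of a special case of Lemma~\ref{lem7.2}. The only points to watch are the uniformity in $m$ demanded by Lemma~\ref{lem7.2} (which is why one passes through Remark~\ref{remOS2.1.z}, a single fixed $Q$ being involved), the trivial order comparison of $\log(\varepsilon_0/\varepsilon)$ with $\log(1/\varepsilon)$ as $\varepsilon\to 0$ so that $o(\log^n\frac{1}{\varepsilon})$ and $o(I^n(\varepsilon,\varepsilon_0))$ describe the same class, and the mild technicality that $f_m$ is only assumed to map $D$ into $\overline{{\Bbb R}^n}$; the latter is harmless because the argument of Lemma~\ref{lem7.1} uses the values $f_m(x_0)$ only for large $m$, and $f_m(x_0)\to f(x_0)\neq\infty$ forces $f_m(x_0)\in{\Bbb R}^n$ for all such $m$, so the modular estimates behind Lemma~\ref{lem7.1} apply unchanged.
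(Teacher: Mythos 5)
Your proposal is correct and coincides with the paper's own argument, which consists precisely of applying Lemma~\ref{lem7.2} (via the case $Q_m\equiv Q$ noted in Remark~\ref{remOS2.1.z}) with the choice $\psi(t)=1/t$, the identification of $o\left(\log^n\frac{1}{\varepsilon}\right)$ with $o\left(I^n(\varepsilon,\varepsilon_0)\right)$ being immediate. Your additional remark on the range $\overline{{\Bbb R}^n}$ versus ${\Bbb R}^n$ is a fair observation on a detail the paper leaves implicit, but it does not change the route.
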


\medskip
\begin{proof} The conclusion follows from Lemma \ref{lem7.2} by the
choice $\psi(t)=\frac{1}{t}.$ \end{proof} $\Box$

\medskip
\medskip For every nondecreasing function $\Phi:[0,\infty ]\rightarrow
[0,\infty ] ,$ the {\bf inverse function} $\Phi^{-1}:[0,\infty
]\rightarrow [0,\infty ]$ can be well defined by setting
$$\Phi^{-1}(\tau)\ =\ \inf\limits_{\Phi(t)\ge \tau}\ t\ .$$
As usual, here $\inf$ is equal to $\infty$ if the set of
$t\in[0,\infty ]$ such that $\Phi(t)\ge \tau$ is empty. Note that
the function $\Phi^{-1}$ is nondecreasing, too. Note also that if
$h: [0,\infty ]\rightarrow [0,\infty ]$ is a sense--preserving
homeomorphism and $\varphi : [0,\infty ]\rightarrow [0,\infty ]$ is
a nondecreasing function, then
\begin{equation}\label{eq1!!!}
(\varphi\circ h)^{-1}\ =\ h^{-1}\circ\varphi^{-1} \ .
\end{equation}

\medskip

\begin{theorem}{}\label{th4.2} {\sl\,Let $D$ be a domain in ${\Bbb
R}^n,$ $n\ge 2,$ let $Q:D \rightarrow (0,\infty)$ be a measurable
function and $\Phi:[0, \infty]\rightarrow [0, \infty]$ be a
nondecreasing convex function. Suppose that
\begin{equation}\label{eq2!!}
\int\limits_D\Phi\left(Q(x)\right)\frac{dm(x)}{\left(1+|x|^2\right)^n}\
\le\ M<\infty
\end{equation}
and
\begin{equation}\label{eq3!}
\int\limits_{\delta}^{\infty}
\frac{d\tau}{\tau\left[\Phi^{-1}(\tau)\right]^{\frac{1}{n-1}}}\ =\
\infty
\end{equation}
for some $\delta>\Phi(0).$ Suppose that $f_m,$ $m=1,2,\ldots,$ is a
sequence of ring $Q$--ho\-me\-o\-mor\-phisms of $D$ into ${\Bbb
R}^n$ converging locally uniformly to a mapping $f.$ Then the
mapping $f$ is either a constant in $\overline{{\Bbb R}^n}$ or a
homeomorphism into ${\Bbb R}^n.$ }
\end{theorem}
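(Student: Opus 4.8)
The plan is to reduce Theorem \ref{th4.2} to Theorem \ref{th6.6.5A} (or directly to Lemma \ref{lem7.2}) by showing that the integral divergence condition (\ref{eq3!}) together with the bound (\ref{eq2!!}) forces, at every point $x_0\in D$, the spherical-mean condition (\ref{eq6.6.6}), namely $\int_0^{\varepsilon(x_0)} dr/\bigl(r\,q_{x_0}^{1/(n-1)}(r)\bigr)=\infty$, where $q_{x_0}(r)$ is the average of $Q$ over $|x-x_0|=r$. Fix $x_0\in D$. First I would absorb the harmless weight $(1+|x|^2)^{-n}$: on a fixed ball $B(x_0,\varepsilon_0)$ with $\overline{B(x_0,\varepsilon_0)}\subset D$ this weight is comparable to a positive constant, so (\ref{eq2!!}) gives $\int_{B(x_0,\varepsilon_0)}\Phi(Q(x))\,dm(x)\le M'<\infty$. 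Writing this in spherical coordinates and using the convexity of $\Phi$ together with Jensen's inequality over the sphere $S(x_0,r)$ (whose normalized surface measure has total mass one) yields $\Phi(q_{x_0}(r))\le \dashint_{S(x_0,r)}\Phi(Q(x))\,d\mathcal H^{n-1}$, hence $\int_0^{\varepsilon_0} \Phi(q_{x_0}(r))\,r^{n-1}\,dr\le C\,M'<\infty$.

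The heart of the matter is then the implication ``$\int_\delta^\infty \frac{d\tau}{\tau[\Phi^{-1}(\tau)]^{1/(n-1)}}=\infty$ and $\int_0^{\varepsilon_0}\Phi(q_{x_0}(r))\,r^{n-1}\,dr<\infty$ $\Rightarrow$ $\int_0^{\varepsilon_0}\frac{dr}{r\,q_{x_0}^{1/(n-1)}(r)}=\infty$.'' This is exactly the type of Orlicz-type integral criterion proved in the monographs cited in the excerpt (it is the standard bridge between the integral condition on $\Phi$ and the logarithmic-type divergence of the spherical average), so I would either invoke it as a known lemma from \cite{MRSY} or \cite{GRSY}, or reproduce its short proof: one shows by a change of variable $\tau=\Phi(q_{x_0}(r))$, monotonicity rearrangement, and the relation (\ref{eq1!!!}) $(\varphi\circ h)^{-1}=h^{-1}\circ\varphi^{-1}$ (applied with a logarithmic substitution $r=e^{-t}$), that the finiteness of $\int_0^{\varepsilon_0}\Phi(q_{x_0}(r))\,r^{n-1}\,dr$ would, if the conclusion failed, contradict (\ref{eq3!}). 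The key estimate is that $q_{x_0}(r)$ cannot be too large on too large a set of radii without making $\int\Phi(q_{x_0})\,r^{n-1}\,dr$ infinite, and (\ref{eq3!}) quantifies precisely how the growth of $\Phi^{-1}$ controls this trade-off.

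Once (\ref{eq6.6.6}) is established at the arbitrary point $x_0\in D$, Theorem \ref{th6.6.5A} applies verbatim to the sequence $f_m$ of ring $Q$-homeomorphisms converging locally uniformly to $f$, and gives the dichotomy: $f$ is either constant in $\overline{{\Bbb R}^n}$ or a homeomorphism into ${\Bbb R}^n$. Alternatively, one can feed the data directly into Lemma \ref{lem7.2} with the choice $\psi(t)=1/\bigl(t\,q_{x_0}^{1/(n-1)}(t)\bigr)$, checking that $I(\varepsilon,\varepsilon_0)=\int_\varepsilon^{\varepsilon_0}\psi(t)\,dt<\infty$ (as in the proof of Theorem \ref{th6.6.5A}, via the criterion of ring $Q$-homeomorphisms and positivity of the modulus) and that $\int_{\varepsilon<|x-x_0|<\varepsilon_0}Q(x)\psi^n(|x-x_0|)\,dm(x)=\omega_{n-1}\,I(\varepsilon,\varepsilon_0)=o\bigl(I^n(\varepsilon,\varepsilon_0)\bigr)$, the last step again using $I(\varepsilon,\varepsilon_0)\to\infty$ which is exactly (\ref{eq6.6.6}).

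\textbf{Main obstacle.} The only nontrivial point is the Orlicz integral implication in the second paragraph — the passage from the convex-integral bound (\ref{eq2!!})/(\ref{eq3!}) to the pointwise divergence (\ref{eq6.6.6}). Everything else is either a comparison of the weight to a constant, Jensen's inequality on spheres, or a direct citation of Theorem \ref{th6.6.5A}. I would expect to cite the Orlicz lemma from \cite{MRSY}/\cite{GRSY} rather than re-derive it, since it is precisely the technical device these monographs were built to supply.
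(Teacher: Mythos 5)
Your proposal is correct and follows essentially the same route as the paper: the authors also reduce Theorem \ref{th4.2} to Theorem \ref{th6.6.5A} by observing that (\ref{eq2!!}) and (\ref{eq3!}) force the divergence of the integral in (\ref{eq6.6.6}), citing Theorem 3.1 in \cite{RS$_2$} for exactly the Orlicz-type implication you isolate (and propose to quote from \cite{MRSY}/\cite{GRSY} or re-derive via Jensen on spheres). Your unfolding of the weight absorption and the Jensen step is just the standard interior of that cited result, so there is no substantive difference.
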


\medskip
\begin{proof}
It follows from (\ref{eq2!!})--(\ref{eq3!}) and Theorem 3.1 in
\cite{RS$_2$} that the integral in (\ref{eq6.6.6}) is divergent for
some positive $\varepsilon(x_0)<{\rm dist}\,(x_0,
\partial D).$ The rest follows by Theorem \ref{th6.6.5A}.
\end{proof} $\Box$

\medskip

\begin{remark}\label{remark4.1} We may assume in Theorem \ref{th4.2}
that the function $\Phi(t)$ is not convex on the whole segment
$[0,\infty]$ but only on the segment $[t_*,\infty]$ where
$t_*=\Phi^{-1}(\delta)$. Indeed, every non-decreasing function $\Phi
:[0,\infty]\to[0,\infty]$ which is convex on the segment
$[t_*,\infty]$ can be replaced by a non-decreasing convex function
$\Phi_*:[0,\infty]\to[0,\infty]$ in the following way. Set
$\Phi_*(t)\equiv 0$ for $t\in [0,{t_*} ]$, $\Phi(t)=\varphi(t)$ for
$t\in[t_*,T_*]$ and $\Phi_*\equiv \Phi(t)$ for $t\in[T_*,\infty]$,
where $\tau =\varphi(t)$ is the line passing through the point
$(0,{t_*})$ and touching the graph of the function $\tau =\Phi(t)$
at a point $(T_*,\Phi(T_*))$, $T_*\in({t_*},\infty )$.  By the
construction we have that $\Phi_*(t)\le \Phi(t)$ for all
$t\in[0,\infty]$ and $\Phi_*(t)=\Phi(t)$ for all $t\ge T_*$ and,
consequently, the conditions (\ref{eq2!!}) and (\ref{eq3!}) hold for
$\Phi_*$ under the same $M$ and every $\delta > 0$.

Furthermore, by the same reasons it is sufficient to assume that the
function $\Phi$ is only minorised by a nondecreasing convex function
$\Psi$ on a segment $[T,\infty ]$ such that
\begin{equation}\label{eq!3!}
\int\limits_{\delta}^{\infty}
\frac{d\tau}{\tau\left[\Psi^{-1}(\tau)\right]^{\frac{1}{n-1}}}\ =\
\infty
\end{equation}
for some $T\in [0,\infty )$ and $\delta>\Psi(T).$ Note that the
condition (\ref{eq!3!}) can be written in terms of the function
$\psi(t)=\log \Psi(t)\ $:
\begin{equation}\label{eq3!!!33}
\int\limits_{\Delta}^{\infty}\ \psi(t)\ \frac{dt}{t^{n^{\prime}}}\
=\ \infty
\end{equation}
for some $\Delta>t_0\in [T,\infty ]$ where
$t_0:=\sup\limits_{\psi(t)=-\infty} t,$ $t_0=T$ if $\psi(T)>-\infty
,$ and where $\frac{1}{n'}+\frac{1}{n}=1$, i.e., $n'=2$ for $n=2$,
$n'$ is decreasing in $n$ and $n'=n/(n-1)\to1$ as $n\to\infty$, see
Proposition 2.3 in \cite{RS$_2$}. It is clear that if the function
$\psi$ is nondecreasing and convex, then the function
$\Phi=e^{\psi}$ is so but the inverse conlusion generally speaking
is not true. However, the conclusion of Theorem \ref{th4.2} is valid
if $\psi^m(t)$, $t\in [T,\infty ]$, is convex and (\ref{eq3!!!33})
holds for $\psi^m$ under some $m\in{\mathbb N}$ because $e^{\tau}\ge
\tau^m/m!$ for all $m\in{\mathbb N}$.
\end{remark}

\medskip

\begin{corollary}\label{cor6.6.7B}{\sl\,
In particular, the conclusion of Theorem \ref{th4.2} is valid if,
for some $\alpha > 0$,
\begin{equation}\label{eq2!!!2}
\int\limits_D e^{\alpha Q^{\frac{1}{n-1}}(x)
}\frac{dm(x)}{\left(1+|x|^2\right)^n}\ \le\ M<\infty \ .
\end{equation}
The same is true for any function $\Phi=e^{\psi}$ where $\psi(t)$ is
a finite product of the function $\alpha t^{\beta}$, $\alpha >0$,
$\beta \ge 1/(n-1)$, and some of the functions
$[\log(A_1+t)]^{\alpha_1}$, $[\log\log(A_2+t)]^{\alpha_2},\ \ldots \
$, $\alpha_m\ge -1$, $A_m\in{\mathbb R }$, $m\in{\mathbb N}$, $t\in
[T,\infty ]$, $\psi(t)\equiv\psi(T)$,  $t\in [0,T]$.}
\end{corollary}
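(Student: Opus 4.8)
The plan is to reduce Corollary \ref{cor6.6.7B} to Theorem \ref{th4.2} (in the strengthened form given in Remark \ref{remark4.1}) by verifying, for each of the asserted choices of $\Phi=e^{\psi}$, that the divergence condition (\ref{eq3!}) (equivalently (\ref{eq3!!!33})) holds. First I would dispose of the case (\ref{eq2!!!2}): here $\Phi(t)=e^{\alpha t^{1/(n-1)}}$, so $\Phi^{-1}(\tau)=\left(\frac{1}{\alpha}\log\tau\right)^{n-1}$ for $\tau\ge 1$, hence $\left[\Phi^{-1}(\tau)\right]^{1/(n-1)}=\frac{1}{\alpha}\log\tau$ and
$$
\int\limits_{\delta}^{\infty}\frac{d\tau}{\tau\left[\Phi^{-1}(\tau)\right]^{\frac{1}{n-1}}}\ =\ \alpha\int\limits_{\delta}^{\infty}\frac{d\tau}{\tau\log\tau}\ =\ \infty
$$
for any $\delta>1=\Phi(0)$. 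Since $\Phi$ is convex on $[t_*,\infty]$ for $t_*$ large (as $t^{1/(n-1)}$ is convex when $n=2$ and, for $n\ge 3$, $e^{\alpha t^{1/(n-1)}}$ is eventually convex), Remark \ref{remark4.1} applies and (\ref{eq2!!}) is exactly (\ref{eq2!!!2}); Theorem \ref{th4.2} then gives the conclusion.

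For the general family $\Phi=e^{\psi}$ I would pass to the logarithmic reformulation (\ref{eq3!!!33}): with $\psi(t)=\alpha t^{\beta}\prod_j L_j(t)^{\alpha_j}$, where $L_1(t)=\log(A_1+t)$, $L_2(t)=\log\log(A_2+t)$, etc., I must check that
$$
\int\limits_{\Delta}^{\infty}\psi(t)\,\frac{dt}{t^{n'}}\ =\ \int\limits_{\Delta}^{\infty}\alpha\, t^{\beta-n'}\prod_j L_j(t)^{\alpha_j}\,dt\ =\ \infty .
$$
Since $\beta\ge 1/(n-1)$ and $n'=n/(n-1)$, we have $\beta-n'\ge \frac{1}{n-1}-\frac{n}{n-1}=-1$, so the power $t^{\beta-n'}$ is at least $t^{-1}$. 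In the borderline case $\beta-n'=-1$ (i.e. $\beta=1/(n-1)$) the integral is $\int^{\infty}\frac{dt}{t}\prod_j L_j(t)^{\alpha_j}$, which by the standard Bertrand-series/iterated-logarithm test diverges precisely because each $\alpha_j\ge -1$ (if some $\alpha_j>-1$ the corresponding iterated log pushes it to divergence; if all $\alpha_j=-1$ one gets $\int^{\infty}\frac{dt}{t\,\log t\,\log\log t\cdots}$ which still diverges). If $\beta-n'>-1$, the integral diverges even more crudely since the logarithmic factors are bounded below by a positive constant for large $t$. Finally one invokes the last sentence of Remark \ref{remark4.1}: even if $\psi$ itself is not convex, some power $\psi^m$ is eventually convex (a product of $t^{\beta}$ with powers of iterated logarithms raised to a large integer power $m$ is convex for $t$ large, as its logarithm is convex and dominated by the leading $m\beta\log t$ term), and (\ref{eq3!!!33}) for $\psi$ implies it for $\psi^m$ up to a change of $\Delta$ since $\psi^m\ge\psi$ eventually; then $e^{\tau}\ge\tau^m/m!$ gives the required minorant $\Psi$.

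The main obstacle I anticipate is not conceptual but the bookkeeping in the borderline diverging integral: one has to be careful that the condition $\alpha_m\ge -1$ on \emph{every} factor — together with $A_m\in{\mathbb R}$ chosen so that all the iterated logarithms are positive and increasing on $[T,\infty]$ — is exactly what makes the Bertrand-type integral $\int^{\infty}\frac{dt}{t}\prod_j L_j(t)^{\alpha_j}$ diverge, and to treat correctly the mixed case where the last nontrivial exponent equals $-1$ while earlier ones are larger. Once that elementary calculus lemma on iterated-logarithm integrals is in hand, the rest is a direct substitution into Theorem \ref{th4.2} via Remark \ref{remark4.1}, so no new machinery from the mapping theory is needed.
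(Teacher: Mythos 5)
Your proposal is correct and follows essentially the route the paper intends: the paper gives no separate proof of this corollary, treating it as immediate from Theorem \ref{th4.2} together with Remark \ref{remark4.1} (eventual convexity or a convex minorant, the logarithmic criterion \eqref{eq3!!!33} for $\psi=\log\Phi$, and the bound $e^{\tau}\ge\tau^m/m!$), which is exactly the machinery you invoke, and your explicit computation of $\Phi^{-1}$ for $\Phi=e^{\alpha t^{1/(n-1)}}$ and the Bertrand-type divergence under $\alpha_j\ge-1$ are the intended verifications. Only one phrase needs repair: when $\beta-n'>-1$ the logarithmic factors with negative exponents are \emph{not} bounded below by a positive constant, but since iterated logarithms grow slower than any power one still has $t^{\beta-n'}\prod_j L_j(t)^{\alpha_j}\ge c\,t^{\beta-n'-\varepsilon}$ for large $t$ with $\beta-n'-\varepsilon>-1$, so the divergence in \eqref{eq3!!!33} persists.
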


\medskip

\begin{remark}\label{remark4.3}
For further applications, the integral conditions (\ref{eq2!!}) and
(\ref{eq3!}) for $Q$ and $\Phi$ can be written in other forms that
are more convenient for some cases. Namely, by (\ref{eq1!!!}) with
$h(t)=t^{\frac{1}{n-1}}$ and $\varphi(t)=\Phi(t^{n-1})$, $\Phi =
\varphi\circ h$, the couple of conditions (\ref{eq2!!}) and
(\ref{eq3!}) is equivalent to the following couple
\begin{equation}\label{eq2!!!}
\int\limits_D\varphi\left(Q^{\frac{1}{n-1}}(x)\right)\frac{dm(x)}{\left(1+|x|^2\right)^n}\
\le\ M<\infty
\end{equation}
and
\begin{equation}\label{eq3!!!}
\int\limits_{\delta}^{\infty} \frac{d\tau}{\tau\varphi^{-1}(\tau)}\
=\ \infty
\end{equation}
for some $\delta>\varphi(0).$ Moreover, by Theorem 2.1 in
\cite{RSY$_6$} the couple of the conditions (\ref{eq2!!!}) and
(\ref{eq3!!!}) is in turn equivalent to the next couple
\begin{equation}\label{eq2!!!2A}
\int\limits_D e^{\psi\left( Q^{\frac{1}{n-1}}(x)\right)
}\frac{dm(x)}{\left(1+|x|^2\right)^n}\ \le\ M<\infty
\end{equation}
and
\begin{equation}\label{eq3!!!3}
\int\limits_{\Delta}^{\infty}\ \psi(t)\ \frac{dt}{t^2}\ =\ \infty
\end{equation}
for some $\Delta>t_0$ where $t_0:=\sup\limits_{\psi(t)=-\infty}t,$
$t_0=0$ if $\psi(0)>-\infty .$

Finally, as it follows from Lemma \ref{lem7.2} all the results of
this section are valid if $f_m$ are $Q_m$-homeomorphisms and the
above conditions on $Q$ hold for $Q_m$ uniformly with respect to the
parameter $m=1,2,\ldots $.
\end{remark}

\cc
\section{On Completeness of Ring Homeomorphisms}

\medskip
The following result for the plane case can be found in the paper
\cite{RSY$_5$}, Theorem 4.1, see also Theorem 6.2 in the monograph
\cite{GRSY}.

\medskip
\begin{theorem}{}\label{thSTR4.2b}
{\sl Let $f_m:D\rightarrow\overline{{\Bbb R}^n},$ $m=1,2,\ldots\ ,$
be a sequence of ring $Q$--ho\-meo\-mor\-phisms at a point $x_0\in
D.$ If $f_m$ converges locally uniformly to a ho\-meo\-mor\-phism
$f:D\rightarrow\overline{{\Bbb R}^n} ,$ then $f$ is also a ring
$Q$--ho\-me\-o\-mor\-phism at $x_0.$} \end{theorem}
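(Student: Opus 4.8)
The plan is to verify the defining inequality (\ref{1.8}) for $f$ at $x_0$. Fix a ring $A=A(x_0,r_1,r_2)$ with $0<r_1<r_2<r_0:={\rm dist}\,(x_0,\partial D)$, put $S_i=S(x_0,r_i)$, and fix a measurable $\eta:(r_1,r_2)\to[0,\infty]$ with $\int_{r_1}^{r_2}\eta(r)\,dr\ge1$; we may assume $K:=\int_A Q(x)\,\eta^n(|x-x_0|)\,dm(x)<\infty.$ The idea is to compare the pair $f(S_1),f(S_2)$ with the slightly shrunken annuli $A(x_0,s_1',s_2')$, $r_1<s_1'<s_2'<r_2$: one uses $f_m\to f$ to place $f(S_1)$ and $f(S_2)$ on opposite sides of $f_m(A(x_0,s_1',s_2'))$, invokes (\ref{1.8}) for $f_m$ on that annulus, and then lets $s_1'\downarrow r_1$, $s_2'\uparrow r_2$.

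For the key step, fix $r_1<s_1'<s_2'<r_2$ and claim that for all large $m$ one has $f(S_1)\subset f_m(\overline{B(x_0,s_1')})$ and $f(S_2)\cap f_m(\overline{B(x_0,s_2')})=\varnothing$. The second relation is immediate: $f_m\to f$ uniformly on the compactum $\overline{B(x_0,s_2')}\subset D$, while $f(\overline{B(x_0,s_2')})$ and $f(S_2)$ are disjoint compacta (as $r_2>s_2'$ and $f$ is injective), so $f_m(\overline{B(x_0,s_2')})$ stays off $f(S_2)$ for $m$ large. The first is where the homeomorphism structure enters: by Proposition \ref{prCOM3.1} applied to the restrictions $f_m|_{B(x_0,s_1')}\to f|_{B(x_0,s_1')}$ one gets $f(B(x_0,s_1'))\subset{\rm Kern}\,f_m(B(x_0,s_1'))$, and since $f(S_1)$ is a compact subset of the open set $f(B(x_0,s_1'))$ (because $r_1<s_1'$), it lies in $f_m(B(x_0,s_1'))$ for all large $m$ (a compactum inside the kernel eventually lies inside every $f_m(B(x_0,s_1'))$, cf.\ the proof of Lemma \ref{inverse}). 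Granting the claim, let $\gamma:[a,b]\to\overline{{\Bbb R}^n}$ satisfy $\gamma(a)\in f(S_1)$, $\gamma(b)\in f(S_2)$. Then $\gamma(a)$ lies in the compactum $f_m(\overline{B(x_0,s_1')})\subset f_m(B(x_0,s_2'))$ while $\gamma(b)$ lies outside $f_m(\overline{B(x_0,s_2')})\supset f_m(\overline{B(x_0,s_1')})$; since, by the Brouwer theorem on invariance of domains, the topological boundaries of $f_m(\overline{B(x_0,s_1')})$ and of $f_m(B(x_0,s_2'))$ are $f_m(S(x_0,s_1'))$ and $f_m(S(x_0,s_2'))$ respectively, the curve $\gamma$ first passes through $f_m(S(x_0,s_1'))$ and afterwards through $f_m(S(x_0,s_2'))$, hence contains a subcurve in $\Gamma(f_m(S(x_0,s_1')),f_m(S(x_0,s_2')))$. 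Thus $\Gamma(f(S_1),f(S_2))$ is minorized by that family and $M(\Gamma(f(S_1),f(S_2)))\le M(\Gamma(f_m(S(x_0,s_1')),f_m(S(x_0,s_2'))))$.

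Now for $s_1',s_2'$ close enough to $r_1,r_2$ one has $0<\int_{s_1'}^{s_2'}\eta(t)\,dt<\infty$ (since $\int_{s_1'}^{s_2'}\eta\uparrow\int_{r_1}^{r_2}\eta\ge1$; the degenerate values of this integral are disposed of by a routine truncation of $\eta$), so the normalisation $\eta_*$ of the restriction $\eta|_{(s_1',s_2')}$ is an admissible function on $(s_1',s_2')$. Since $f_m$ is a ring $Q$--homeomorphism at $x_0$ and $A(x_0,s_1',s_2')\subset A$, applying (\ref{1.8}) for $f_m$ to this annulus and $\eta_*$ gives, together with the key step,
$$
M\bigl(\Gamma(f(S_1),f(S_2))\bigr)\ \le\ \int\limits_{A(x_0,s_1',s_2')}Q(x)\,\eta_*^n(|x-x_0|)\,dm(x)\ =\ \frac{1}{\bigl(\int_{s_1'}^{s_2'}\eta(t)\,dt\bigr)^{n}}\int\limits_{A(x_0,s_1',s_2')}Q(x)\,\eta^n(|x-x_0|)\,dm(x)\ \le\ \frac{K}{\bigl(\int_{s_1'}^{s_2'}\eta(t)\,dt\bigr)^{n}}\ .
$$
The two ends do not depend on $m$; letting $s_1'\downarrow r_1$ and $s_2'\uparrow r_2$, the right-hand side tends to $K/(\int_{r_1}^{r_2}\eta)^{n}\le K$. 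Thus $M(\Gamma(f(S_1),f(S_2)))\le K=\int_A Q(x)\,\eta^n(|x-x_0|)\,dm(x)$, and as $A$ and $\eta$ were arbitrary, $f$ satisfies (\ref{1.8}) at $x_0$, i.e.\ $f$ is a ring $Q$--homeomorphism at $x_0$.

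The main obstacle I anticipate is the inclusion $f(S_1)\subset f_m(\overline{B(x_0,s_1')})$ for large $m$: unlike the ``outer'' relation $f(S_2)\cap f_m(\overline{B(x_0,s_2')})=\varnothing$, which is mere uniform convergence on a compactum, the ``inner'' one asserts that the image balls $f_m(B(x_0,s_1'))$ do not degenerate but genuinely exhaust a neighborhood of $f(S_1)$ --- exactly what Proposition \ref{prCOM3.1} (through the kernel / invariance-of-domain circle of ideas; alternatively Lemma \ref{inverse}) supplies. The remaining ingredients --- that a curve leaving the compactum $f_m(\overline{B(x_0,s_j')})$ must cross $f_m(S(x_0,s_j'))$, the minorization property of the modulus, and the handling of the test function on the shrunken interval together with the passage $s_j'\to r_j$ --- are routine.
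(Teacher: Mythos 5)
Your proof is correct, but it proceeds along a genuinely different route than the paper's. The paper does not verify (\ref{1.8}) directly on the image spheres: it takes an arbitrary pair of continua $E,F$ lying in the two complementary components of the ring $A(x_0,r_1,r_2)$, exhausts $D^{\,\prime}=f(D)$ by subdomains $D^{\,*}_l$ compactly contained in $D^{\,\prime}$, and then invokes the convergence theorem for moduli under locally uniform convergence (Theorem A.12 in \cite{MRSY}) to pass from $M(\Gamma(f_m(E),f_m(F),D^{\,*}_l))$ to $M(\Gamma(f(E),f(F),D^{\,*}_l))$, bounding the former by $\int_A Q\,\eta^n\,dm$ via monotonicity and the ring condition for $f_m$; finally it lets $l\to\infty$ using the limit theorem for increasing path families (Theorems A.7 and A.25 in \cite{MRSY}), obtaining the continuum form of the ring inequality for $f$. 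You instead verify (\ref{1.8}) itself by a shrunken-annulus trapping argument: Proposition \ref{prCOM3.1} (kernel convergence, as in the proof of Lemma \ref{inverse}) puts $f(S_1)$ inside $f_m(B(x_0,s_1^{\prime}))$, uniform convergence keeps $f(S_2)$ off $f_m(\overline{B(x_0,s_2^{\prime})})$, Jordan--Brouwer forces every curve of $\Gamma(f(S_1),f(S_2))$ to meet both $f_m(S(x_0,s_1^{\prime}))$ and $f_m(S(x_0,s_2^{\prime}))$, and minorization plus (\ref{1.8}) for $f_m$ with the renormalized $\eta$ on $(s_1^{\prime},s_2^{\prime})$ gives the bound $K/\bigl(\int_{s_1^{\prime}}^{s_2^{\prime}}\eta\bigr)^n$, after which $s_1^{\prime}\downarrow r_1$, $s_2^{\prime}\uparrow r_2$. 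Your version is more elementary and self-contained: it uses only the topological facts already established in Section 3 and the minorization principle, avoiding both the modulus-convergence theorem A.12 and the (implicit in the paper) equivalence between the sphere formulation (\ref{1.8}) and the continuum formulation; the paper's argument, on the other hand, delivers the stronger-looking continuum inequality in $f(D)$ directly and leans on standard modulus machinery. Two cosmetic remarks: in the degenerate case $\int_{s_1^{\prime}}^{s_2^{\prime}}\eta=\infty$ no truncation is needed --- the restriction of $\eta$ is already admissible and (\ref{1.8}) for $f_m$ gives the bound $K$ at once; and for the crossing step you only need the inclusions $\partial f_m(\overline{B(x_0,s_1^{\prime})})\subset f_m(S(x_0,s_1^{\prime}))$ and $\partial f_m(B(x_0,s_2^{\prime}))\subset f_m(S(x_0,s_2^{\prime}))$ together with the fact that $|\gamma|$ meets each of these spheres, not the temporal ordering of the crossings.
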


\medskip
\begin{proof} Note first that every point $w_0\in D^{\,\prime}=f(D)$
belongs to $D_m^{\,\prime}=f_m(D)$ for all $m\ge N$ together with
$\overline{B^{\,*}(w_0,\varepsilon)},$ where
$B^{\,*}(w_0,\varepsilon)=\{ w\in\overline{{\Bbb R}^n} :
h(w,w_0)<\varepsilon \}$ for some $\varepsilon > 0$ (see Proposition
\ref{prCOM3.1}).

Now, we note that $D^{\,\prime}=\bigcup_{l=1}^{\infty}\ C_l$ where
$C_l=\overline{D^*_l},$ and $D^*_l$ is a connected component of the
open set $\Omega_l = \{ w\in D^{\,\prime} : h(w,\partial
D^{\,\prime})>1/l \},$ $l=1,2,\ldots ,$ including a fixed point
$w_0\in D^{\,\prime}.$ Indeed, every point $w\in D^{\,\prime}$ can
be joined with $w_0$ by a path $\gamma$ in $D^{\,\prime}.$ Because
the locus $|\gamma |$ is compact we have that $h(|\gamma |, \partial
D^{\,\prime})>0$ and, consequently, $|\gamma|\subset D^{\,*}_l$ for
large enough $l=1,2,\ldots .$

Next, take an arbitrary pair of continua $E$ and $F$ in $D$ which
belong to the different connected components of the complement of a
ring $A=A(x_0,r_1,r_2)= \{x\in{\Bbb R}^n : r_1 < |x-x_0| < r_2 \},$
$x_0\in D,$ $0<r_1<r_2<r_0={\rm dist}\,(x_0, \partial D).$ For $l\ge
l_0,$ continua $f(E)$ and $f(F)$ belong to $D^{\,*}_l.$ Then the
continua $f_m(E)$ and $f_m(F)$ also belong to $D^{\,*}_l$ for large
enough $m.$ Fix one such $m.$ It is known that
$$
M(\Gamma(f_m(E),f_m(F), D^{\,*}_l))\ \rightarrow\
M(\Gamma(f(E),f(F), D^{\,*}_l))
$$
as $m\rightarrow\infty ,$ see e.g. Theorem A.12 of Section A1 in
\cite{MRSY}. However, $D^{\,*}_l\subset f_m(D)$ for large enough $m$
and hence
$$
M(\Gamma(f_m(E),f_m(F), D^{\,*}_l))\le M(\Gamma(f_m(E),f_m(F),
f_m(D)))
$$
and, thus, by definition of ring $Q$--ho\-me\-o\-mor\-phisms
$$
M(\Gamma(f(E), f(F), D^{\,*}_l))\le\int\limits_{A} Q(x)\cdot
\eta^n(|x-x_0|)\ dm(x)
$$
for every measurable function $\eta :(r_1,r_2)\rightarrow [0,\infty
]$ such that $\int\eta(r)\ dr\ge 1.$ Finally, since
$\Gamma=\bigcup\limits_{l=l_0}^{\infty}\Gamma_l$ where
$\Gamma=\Gamma(f(E),f(F), f(D)),$ $\Gamma_l=\Gamma(f(E),f(F),
D^{\,*}_l)$ is increasing in $l=1,2,\ldots ,$ we obtain that
$M(\Gamma)=\lim\limits_{l\rightarrow\infty}M(\Gamma_l)$ (see e.g.
Theorems A.7 and A.25 in \cite{MRSY}). Thus,
$$
M(\Gamma(f(E), f(F), f(D))\le \int\limits_{A} Q(x)\cdot
\eta^n(|x-x_0|)\ dm(x)\ ,
$$
i.e., $f$ is a ring $Q$--ho\-me\-o\-mor\-phism at $x_0.$
$\Box$\end{proof}

\cc
\section{Normal Classes of Ring $Q$--ho\-me\-o\-mor\-phisms}

Given a domain $D$ in ${\Bbb R}^n,$ $n\ge 2,$ a measurable function
$Q:D\rightarrow (0, \infty),$ and $\Delta>0,$ denote by ${\frak
F}_{Q, \Delta}$ the family of all ring $Q$--ho\-me\-o\-mor\-phisms
$f$ of $D$ into $\overline{{\Bbb R}^n}$ such that
$h\left(\overline{{\Bbb R}^n}\setminus f(D)\right)\ge\Delta.$ Recall
that a class of mappings is called normal if every sequence of
mappings in the class contains a subsequence that converges locally
uniformly.

\medskip
\begin{lemma}\label{lem7.3}
{\sl\, Let $D$ be a domain in ${\Bbb R}^n,$ $n\ge 2,$ and let $Q:
D\rightarrow (0, \infty)$ be a measurable function. If the
conditions (\ref{eq6.3.25A})--(\ref{eq6.3.24}) hold, then the class
${\frak F}_{Q, \Delta}$ forms a normal family for all $\Delta>0$. }
\end{lemma}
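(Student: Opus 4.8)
The plan is to show that ${\frak F}_{Q,\Delta}$ is equicontinuous on $D$ with respect to the spherical metric $h$ and then to apply the Arzela--Ascoli theorem. This suffices because the target $\overline{{\Bbb R}^n}$ is compact in the metric $h$ and $D$ is a $\sigma$-compact, locally compact metric space, so every equicontinuous family of continuous maps $D\to\overline{{\Bbb R}^n}$ is normal (the values automatically lying in a compact space).

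Fix $x_0\in D$; with no loss of generality $x_0=0\in D$, and let $\psi$ and $\varepsilon_0<{\rm dist}\,(0,\partial D)$ be as in $(\ref{eq6.3.25A})$--$(\ref{eq6.3.24})$. Let $f\in{\frak F}_{Q,\Delta}$ be arbitrary. Arguing as in the proof of Lemma \ref{lem7.2} — taking the Lebesgue measurable function equal to $\psi(t)/I(\varepsilon,\varepsilon_0)$ on $(\varepsilon,\varepsilon_0)$ and to $0$ elsewhere as the function $\eta$ in the defining inequality $(\ref{1.8})$ for the ring $A(0,\varepsilon,\varepsilon_0)$ — one gets
$$M\left(\Gamma\left(f(S(0,\varepsilon)),\,f(S(0,\varepsilon_0))\right)\right)\ \le\ \frac{1}{I^n(\varepsilon,\varepsilon_0)}\int\limits_{\varepsilon<|x|<\varepsilon_0}Q(x)\,\psi^n(|x|)\,dm(x)\ ,$$
and by $(\ref{eq6.3.24})$ the right-hand side is $o(I^n(\varepsilon,\varepsilon_0))/I^n(\varepsilon,\varepsilon_0)$, hence tends to $0$ as $\varepsilon\to 0$; the crucial point is that this estimate does not depend on $f$.

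Next I would fix the topology. Since $\overline{B(0,\varepsilon_0)}\subset D$ and $f$ is a homeomorphism onto $f(D)$, the image $f(S(0,\varepsilon_0))$ is a topologically embedded $(n-1)$-sphere which, by the Jordan--Brouwer theorem, splits $\overline{{\Bbb R}^n}$ into $f(B(0,\varepsilon_0))$ and a complementary domain $G$ with $\partial G=f(S(0,\varepsilon_0))$. As $f(S(0,\varepsilon_0))\subset f(D)$, the closed set $\overline{{\Bbb R}^n}\setminus f(D)$ is contained in $G$, hence $h(\overline{G})\ge h\left(\overline{{\Bbb R}^n}\setminus f(D)\right)\ge\Delta$. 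Thus ${\frak R}:=f(A(0,\varepsilon,\varepsilon_0))$ is a ring whose complementary continua are $C_1=f(\overline{B(0,\varepsilon)})$ and $C_2=\overline{G}$, with $\partial C_1=f(S(0,\varepsilon))$ and $\partial C_2=f(S(0,\varepsilon_0))$; since every curve in ${\frak R}$ joining $C_1$ and $C_2$ must begin on $\partial C_1$ and end on $\partial C_2$, the definition of capacity gives ${\rm cap}\,{\frak R}\le M\left(\Gamma\left(f(S(0,\varepsilon)),\,f(S(0,\varepsilon_0))\right)\right)$, so by the second paragraph ${\rm cap}\,{\frak R}\to 0$ as $\varepsilon\to 0$, uniformly in $f$. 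Now I would invoke the standard lower bound for the capacity of a ring in $\overline{{\Bbb R}^n}$ through the spherical diameters of its complementary components: there is a nondecreasing function $\Psi_n$ with $\Psi_n(t)>0$ for all $t>0$ such that ${\rm cap}\,R(C_1,C_2)\ge\Psi_n\left(\min\{h(C_1),h(C_2)\}\right)$; this can be extracted from a Teichm\"{u}ller-type estimate of the kind already used in the proof of Lemma \ref{lem7.1} (cf. Lemma 7.34 in \cite{Vu} and the spherical estimates in \cite{MRSY}). Given $\sigma\in(0,\Delta)$, choose $\varepsilon=\varepsilon(\sigma)>0$ — depending only on $\sigma,\Delta,n,\psi,\varepsilon_0$ and not on $f$ — so small that ${\rm cap}\,{\frak R}<\Psi_n(\sigma)$; then $\min\{h(C_1),h(C_2)\}<\sigma$, and since $h(C_2)=h(\overline{G})\ge\Delta>\sigma$ this forces $h\left(f(\overline{B(0,\varepsilon)})\right)=h(C_1)<\sigma$, i.e. $h(f(x),f(y))<\sigma$ for all $x,y\in B(0,\varepsilon)$ and all $f\in{\frak F}_{Q,\Delta}$. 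Hence ${\frak F}_{Q,\Delta}$ is equicontinuous at $x_0$, and as $x_0$ was arbitrary, it is equicontinuous on $D$; the Arzela--Ascoli theorem completes the proof.

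The main obstacle is exactly this passage from uniform smallness of the modulus $M(\Gamma(f(S_1),f(S_2)))$ to uniform smallness of the spherical diameter $h(f(\overline{B(0,\varepsilon)}))$. It requires, first, a careful verification that $f(A(0,\varepsilon,\varepsilon_0))$ is genuinely a ring with its two complementary components correctly identified, so that the hypothesis $h(\overline{{\Bbb R}^n}\setminus f(D))\ge\Delta$ enters as a fixed positive lower bound for $h(C_2)$ — this is where the parameter $\Delta$ plays its essential role — and, second, quoting the correct capacity lower bound $\Psi_n$ and checking that the resulting threshold $\varepsilon(\sigma)$ is genuinely uniform over the whole class ${\frak F}_{Q,\Delta}$. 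The remaining ingredients — the admissible-function computation and the Arzela--Ascoli theorem — are routine in view of Lemma \ref{lem7.2} and its proof.
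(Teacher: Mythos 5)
Your proof is correct and follows essentially the same route as the paper's: testing the ring $Q$--homeomorphism inequality with $\eta=\psi(t)/I(\varepsilon,\varepsilon_0)$ gives a modulus bound that tends to $0$ as $\varepsilon\to 0$ uniformly in $f$ by (\ref{eq6.3.24}), and the hypothesis $h\left(\overline{{\Bbb R}^n}\setminus f(D)\right)\ge\Delta$ turns this into uniform smallness of $h\left(f(B(x_0,\varepsilon))\right)$, hence equicontinuity and normality via Arzel\`a--Ascoli. The only deviation is that the paper performs the modulus-to-diameter step by quoting the explicit distortion estimate (\ref{eq7.6}) (Lemma 7.5 in \cite{MRSY}), whereas you re-derive a qualitative version of that estimate from the Jordan--Brouwer separation and the standard lower bound for the capacity of a ring in terms of the spherical diameters of its complementary components, which is legitimate since that bound is exactly the ingredient behind the cited lemma.
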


\medskip
\begin{proof}
By Lemma 7.5 in \cite{MRSY}, cf. also Lemma 4.1 in \cite{RS$_1$},
for $y\in B(x_0,r_0),$ $r_0< {\rm dist}\, (x_0,\partial D),$
$S_0=\{x\in{\Bbb R}^n: |x-x_0|=r_0\}$ and $S=\{x\in{\Bbb R}^n :
|x-x_0|=|y -x_0|\}$ we have that
\begin{equation}\label{eq7.6}
h(f(y), f(x_0))\ \le\ \frac{\alpha_n}{\Delta}\cdot
\hbox{exp}\left(-\left\{\frac{\omega_{n-1}}{M\left(\Gamma\left(f(S),\,f(S_0),
f(D)\right)\right)} \right\}^{1/n-1}\right)\ \end{equation}
where $\omega_{n-1}$ is the area of the unit sphere ${\Bbb S}^{n-1}$
in ${\Bbb R}^n$, $\alpha_n=2\lambda_n^2$ with $\lambda_n \in
[4,2e^{n-1}),$ $\lambda_2=4$ and $\lambda_n^{\frac 1n} \rightarrow
e$ as $n\rightarrow \infty.$
We may consider that $\psi$ is a Borel function, because, by 2.3.4
and 2.3.6 in \cite{Fe} there exists a Borel function $\psi_*(t)$
with $\psi(t)=\psi_*(t)$ for a.e. $t\in (0, \varepsilon_0).$ Given
$\varepsilon\in (0, \varepsilon_0),$ consider a Borel measurable
function $\rho_{\varepsilon}$ defined as
$$\rho_{\varepsilon}(x)=
\left\{
\begin{array}{rr}
\psi_*(|x-x_0|)/I(\varepsilon, \varepsilon_0), &   \varepsilon<|x-x_0|<\varepsilon_0,\\
0,  &  {\rm otherwise}\,.
\end{array}
\right. $$
Note that $\rho_{\varepsilon}(x)$ is admissible for
$\Gamma_{\varepsilon}:=\Gamma(S(x_0, \varepsilon), S(x_0,
\varepsilon_0), A(x_0, \varepsilon, \varepsilon_0))$ because
$$\int\limits_{\gamma}\rho_{\varepsilon}(x)|dx|\ge
\frac{1}{I(\varepsilon,
\varepsilon_0)}\int\limits_{\varepsilon}^{\varepsilon_0}\psi(t)dt=1$$
for all (locally rectifiable) curves $\gamma\in
\Gamma_{\varepsilon}$ (see Theorem 5.7 in \cite{Va}). Then by
definition of ring $Q$--homeomorphism at the point $x_0$
\begin{equation}\label{eq7.7}
 M(f(\Gamma_{\varepsilon}))\quad\le\quad
{\cal J}(\varepsilon):=\frac{1}{I^n(\varepsilon, \varepsilon_0)}
\int \limits_{\varepsilon<|x-x_0|<\varepsilon_0}
Q(x)\cdot\psi^{n}(|x-x_0|)\,dm(x)
\end{equation}
for all $f\in {\frak F}_{Q, \Delta}.$ It follows from
(\ref{eq6.3.24}) that, given $\sigma>0,$ there exists
$\delta=\delta(\sigma)$ such that ${\cal J}(\varepsilon)<\sigma$ for
all $\varepsilon\in (0, \delta).$ Then from (\ref{eq7.6}) and
(\ref{eq7.7}) we have that
\begin{equation}\label{eq7.8}
h(f(x), f(x_0))\ \le\ \frac{\alpha_n}{\Delta}\cdot
\hbox{exp}\left(-\left\{\frac{\omega_{n-1}}{\sigma}
\right\}^{1/n-1}\right)\ \end{equation} provided $|x-x_0|<\delta.$
In view of arbitrariness of $\sigma>0$ the equicontinuity of ${\frak
F}_{Q, \Delta}$ follows from (\ref{eq7.8}).
\end{proof} $\Box$

\medskip
\begin{remark}\label{remark6.1}
In particular, the conclusion of Lemma \ref{lem7.3} holds if at
least one of the conditions on $Q$ in Theorems
\ref{th6.6.1A}--\ref{th4.2} and Corollary
\ref{cor6.6.2A}--\ref{cor6.6.7B} holds. The corresponding normality
results have been formulated in \cite{RS$_1$} and \cite{RS$_2$} and
hence we will not repeat them  in the explicit form here.

Furthermore, as it follows from the analysis of the proof of Lemma
\ref{lem7.3}, its conlusion is valid for a more wide class ${\frak
F}_{\Delta}$, $\Delta > 0$, consisting of all ring
$Q$--ho\-me\-o\-mor\-phisms $f$ of $D$ into $\overline{{\Bbb R}^n}$
such that $h\left(\overline{{\Bbb R}^n}\setminus
f(D)\right)\ge\Delta$ satisfying the uniform condition
(\ref{eq6.3.24}) for the variable $Q$ but with a fixed function
$\psi$ in (\ref{eq6.3.25A}). Thus, the conclusion of Lemma
\ref{lem7.3} is also valid if at least one of the conditions on $Q$
in Theorems \ref{th6.6.1A}--\ref{th4.2} and Corollary
\ref{cor6.6.2A}--\ref{cor6.6.7B} is uniform with respect to the
variable functional parameter $Q$.

All notes in Remarks \ref{remark4.1} and \ref{remark4.3} are also
valid for the normality results.
\end{remark}

\medskip
\cc
\section{On Compact Classes of Ring $Q$--homeomorphisms}

\medskip
Given a domain $D$ in ${\Bbb R}^n,$ $n\ge 2,$ a measurable function
$Q:D \rightarrow (0, \infty),$ $x_1, x_2\in D,$ $y_1, y_2\in {\Bbb
R}^n,$ $x_1\ne x_2,$ $y_1\ne y_2,$ set $\frak{R}_{Q}$ the class of
all ring $Q$--ho\-me\-o\-mor\-phisms from $D$ into ${\Bbb R}^n,$
$n\ge 2,$ satisfying the normalization conditions $f(x_1)=y_1,$
$f(x_2)=y_2.$

\medskip
Recall that a class of mappings is called compact if it is normal
and closed. Combining the above results on normality and closeness,
we obtain the following results on compactness for the classes of
ring $Q$--homeomorphisms.

\medskip
\begin{theorem}{}\label{th6.6.1}{\sl\,
If $Q\in$ FMO, then the class $\frak{R}_{Q}$ is compact.}
\end{theorem}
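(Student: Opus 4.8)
The plan is to prove separately that the class $\frak{R}_{Q}$ is normal and that it is closed, and then combine these; the closedness half is a direct assembly of results already at hand, while the normality half needs one extra idea, so I expect the latter to be where the real work lies.

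\medskip\noindent\emph{Closedness.} Suppose $f_m\in\frak{R}_{Q}$ converges locally uniformly to a mapping $f$. Passing to the limit in the normalization $f_m(x_1)=y_1$, $f_m(x_2)=y_2$ gives $f(x_1)=y_1\neq y_2=f(x_2)$, so $f$ is not a constant; since $Q\in$ FMO, Theorem~\ref{th6.6.1A} then forces $f$ to be a homeomorphism of $D$ into ${\Bbb R}^n$. As each $f_m$ is a ring $Q$--homeomorphism at every point $x_0\in D$ and $f_m\to f$ locally uniformly to the homeomorphism $f$, Theorem~\ref{thSTR4.2b} applied at each $x_0$ shows that $f$ is a ring $Q$--homeomorphism in $D$; together with the preserved normalization and $f(D)\subset{\Bbb R}^n$ this gives $f\in\frak{R}_{Q}$, so $\frak{R}_{Q}$ is closed.

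\medskip\noindent\emph{Normality.} By the Arzela--Ascoli theorem and compactness of $\overline{{\Bbb R}^n}$, it suffices to show that $\frak{R}_{Q}$ is equicontinuous on $D$ with respect to the spherical metric. The obstacle here is that Lemma~\ref{lem7.3} cannot be quoted directly: a map $f\in\frak{R}_{Q}$ may well satisfy $f(D)={\Bbb R}^n$ (as already for the identity of $D={\Bbb R}^n$), so that $\overline{{\Bbb R}^n}\setminus f(D)=\{\infty\}$ has spherical diameter $0$ and $\frak{R}_{Q}\not\subseteq\frak{F}_{Q,\Delta}$ for any $\Delta>0$. The device I would use is to delete one prescribed preimage at a time. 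Fix $i\in\{1,2\}$ and set $D_i:=D\setminus\{x_i\}$, which is a domain because $n\ge2$; let $Q_i:=Q|_{D_i}$ (still in FMO, this being a pointwise condition inherited by open subsets) and $\Delta_i:=h(y_i,\infty)>0$. For every $f\in\frak{R}_{Q}$ the restriction $f|_{D_i}$ is a ring $Q_i$--homeomorphism of $D_i$ into ${\Bbb R}^n$: any ring $A(x_0,r_1,r_2)$ with $r_2<{\rm dist}(x_0,\partial D_i)$ avoids $x_i$, so the inequality (\ref{1.8}) for $f$ descends to $f|_{D_i}$. Moreover
\[
\overline{{\Bbb R}^n}\setminus f(D_i)\ =\ \bigl(\overline{{\Bbb R}^n}\setminus f(D)\bigr)\cup\{y_i\}\ \supseteq\ \{\infty,\,y_i\}\,,
\]
so $h\bigl(\overline{{\Bbb R}^n}\setminus f(D_i)\bigr)\ge\Delta_i$, that is, $f|_{D_i}\in\frak{F}_{Q_i,\Delta_i}$.

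\medskip It then remains to invoke Lemma~\ref{lem7.3} on $D_i$. Since $Q_i\in$ FMO$(D_i)$, the conditions (\ref{eq6.3.25A})--(\ref{eq6.3.24}) hold for $Q_i$: with $\psi(t)=1/(t\log\frac1t)$ and $\varepsilon_0<\min\{{\rm dist}(x_0,\partial D_i),e^{-1}\}$, Lemma~\ref{lem6} bounds $\int_{\varepsilon<|x-x_0|<\varepsilon_0}Q_i(x)\psi^n(|x-x_0|)\,dm(x)$ by $O(\log\log\frac1\varepsilon)$, which is $o\bigl(I^n(\varepsilon,\varepsilon_0)\bigr)$ because $I(\varepsilon,\varepsilon_0)=\log\bigl(\log(1/\varepsilon)/\log(1/\varepsilon_0)\bigr)\to\infty$ --- exactly the computation in the proof of Theorem~\ref{th6.6.1A}. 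Hence Lemma~\ref{lem7.3} applies, so $\frak{F}_{Q_i,\Delta_i}$ is a normal family, and in particular equicontinuous on $D_i$ (this is precisely estimate (\ref{eq7.8}) from its proof). Consequently $\{f|_{D_i}:f\in\frak{R}_{Q}\}$ is equicontinuous on $D_i$ for $i=1,2$; since $x_1\neq x_2$ we have $D=D_1\cup D_2$, and therefore $\frak{R}_{Q}$ is equicontinuous on all of $D$. This gives normality, and combined with closedness it proves that $\frak{R}_{Q}$ is compact. The substantive point in the whole argument is this equicontinuity step and, within it, the observation that although $h(\overline{{\Bbb R}^n}\setminus f(D))$ may vanish on $\frak{R}_{Q}$, removing a single prescribed preimage $x_i$ puts the prescribed value $y_i$ back into the complement of the image, which together with the always--omitted point $\infty$ supplies the uniform spherical--diameter lower bound $h(y_i,\infty)$ that Lemma~\ref{lem7.3} needs; doing this for $i=1$ and $i=2$ recovers equicontinuity on the whole domain.
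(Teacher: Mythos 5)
Your proof is correct and follows the route the paper itself intends: closedness from Theorem \ref{th6.6.1A} (FMO forces the nonconstant limit to be a homeomorphism into ${\Bbb R}^n$) together with Theorem \ref{thSTR4.2b}, and normality from the equicontinuity behind Lemma \ref{lem7.3} under the FMO computation of Theorem \ref{th6.6.1A}. The only step the paper does not spell out (it delegates the normalized-family normality to \cite{RS$_1$}, \cite{RS$_2$} in Remark \ref{remark6.1}) is the passage from ${\frak F}_{Q,\Delta}$ to ${\frak R}_{Q}$, and your puncturing argument --- restricting to $D\setminus\{x_i\}$ so that $y_i$ and $\infty$ lie in the complement of the image, then covering $D=D_1\cup D_2$ --- supplies exactly that missing bridge correctly.
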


\medskip
\begin{corollary}\label{cor6.6.2}{\sl\,
The class \, $\frak{R}_{Q}$ is compact if
$$\overline{\lim\limits_{\varepsilon\rightarrow 0}}\ \
 \dashint_{B( x_0 ,\varepsilon)} Q(x)\ \ dm(x) <
 \infty\qquad\qquad\forall\,\,\,x_0
\in D 
$$}
\end{corollary}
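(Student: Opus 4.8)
The plan is to derive Corollary \ref{cor6.6.2} from Theorem \ref{th6.6.1} by showing that the displayed integral condition on $Q$ forces $Q\in$ FMO. First I would fix an arbitrary point $x_0\in D$ and observe that the hypothesis
$$\overline{\lim\limits_{\varepsilon\rightarrow 0}}\ \ \dashint_{B(x_0,\varepsilon)} Q(x)\ dm(x) < \infty$$
is precisely the hypothesis of Corollary \ref{cor5} (with $\varphi$ replaced by $Q$, which is legitimate since $Q$ is nonnegative and hence $|Q|=Q$). By Corollary \ref{cor5}, $Q$ then has finite mean oscillation at $x_0$. Since $x_0\in D$ was arbitrary, $Q$ has finite mean oscillation at every point of $D$, i.e.\ $Q\in$ FMO$(D)$.

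Having established $Q\in$ FMO, the conclusion is immediate: by Theorem \ref{th6.6.1} the class $\frak{R}_{Q}$ is compact. That is the entire argument; it is essentially a one-line reduction, exactly paralleling the passage from Theorem \ref{th6.6.1A} to Corollary \ref{cor6.6.2A} earlier in the paper, where the same implication ``pointwise bounded averages of $Q$ $\Rightarrow$ $Q\in$ FMO'' was invoked via Corollary \ref{cor5}.

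There is no real obstacle here, since all the work has been done upstream: the compactness theorem for FMO weights (Theorem \ref{th6.6.1}) already packages together the normality results of Section 6 (Lemma \ref{lem7.3} and Remark \ref{remark6.1}) with the closedness result of Section 5 (Theorem \ref{thSTR4.2b}, together with Theorem \ref{th6.6.1A} guaranteeing the limit is a homeomorphism rather than a constant, the constant case being excluded by the normalization $f(x_1)=y_1\ne y_2=f(x_2)$). The only point worth a word of care is making sure the hypothesis of Corollary \ref{cor5} is applied with the right function; since $Q$ maps into $(0,\infty)$ it is in particular real-valued and nonnegative, so $|Q(x)|=Q(x)$ and the cited statement applies verbatim. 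Hence the proof reduces to: invoke Corollary \ref{cor5} pointwise to get $Q\in$ FMO, then invoke Theorem \ref{th6.6.1}.
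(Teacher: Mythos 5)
Your proposal is correct and follows essentially the same route the paper intends: the hypothesis yields $Q\in$ FMO pointwise via Corollary \ref{cor5}, and compactness then follows at once from Theorem \ref{th6.6.1}, exactly mirroring how Corollary \ref{cor6.6.2A} is deduced from Theorem \ref{th6.6.1A}. Your closing remarks on how Theorem \ref{th6.6.1} packages the normality (Lemma \ref{lem7.3}) and closedness (Theorem \ref{thSTR4.2b}) ingredients, with the constant limit excluded by the normalization, are consistent with the paper's argument.
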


\medskip
\begin{corollary}\label{cor6.6.4} {\sl\, The class\,
$\frak{R}_{Q}$ is compact if every $x_0 \in D$ is a Lebesgue point
of $Q.$}
\end{corollary}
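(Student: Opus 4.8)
\emph{Proof plan.} The plan is to obtain Corollary~\ref{cor6.6.4} as a one--line consequence of Corollary~\ref{cor6.6.2} (hence of Theorem~\ref{th6.6.1}) and then, for completeness, to indicate how the underlying compactness is proved. First I would note that if $x_{0}\in D$ is a Lebesgue point of $Q$ then $\dashint_{B(x_{0},\varepsilon)}|Q(x)-Q(x_{0})|\,dm(x)\to 0$ as $\varepsilon\to0$, so $\dashint_{B(x_{0},\varepsilon)}Q(x)\,dm(x)\to Q(x_{0})<\infty$ and in particular
$$\overline{\lim\limits_{\varepsilon\to 0}}\ \ \dashint_{B(x_{0},\varepsilon)}Q(x)\,dm(x)\ <\ \infty\qquad\forall\,x_{0}\in D\,.$$
This is exactly the hypothesis of Corollary~\ref{cor6.6.2}, so $\frak{R}_{Q}$ is compact; equivalently, by Corollary~\ref{cor5} the displayed bound already gives $Q\in$ FMO$(D)$, so Theorem~\ref{th6.6.1} applies. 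Thus all the genuine content sits in the compactness statement for the FMO case, which I sketch next.

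Recalling that a family of mappings is compact exactly when it is normal and closed, I would establish these two properties for $\frak{R}_{Q}$ separately. For \emph{normality}, fixing $x_{0}\in D$ and imitating the proof of Theorem~\ref{th6.6.1A}, I would apply Lemma~\ref{lem6} with $\psi(t)=1/(t\log\frac1t)$ and $I(\varepsilon,\varepsilon_{0})=\log\bigl(\log\frac1\varepsilon\big/\log\frac1{\varepsilon_{0}}\bigr)$ to verify the hypotheses (\ref{eq6.3.25A})--(\ref{eq6.3.24}) of Lemma~\ref{lem7.3} at every point of $D$. Then I would run the equicontinuity argument behind Lemma~\ref{lem7.3}, using that every $f\in\frak{R}_{Q}$ omits the value $\infty$ (since $f(D)\subset{\Bbb R}^{n}$) and carries the fixed pair $x_{1},x_{2}$ to the fixed distinct values $y_{1},y_{2}$; these constraints supply the uniform non--degeneracy of the images that the modulus estimate requires. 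Equicontinuity of $\frak{R}_{Q}$ follows, and since $\overline{{\Bbb R}^{n}}$ is compact the family is automatically pointwise bounded in the spherical metric, so the Arzel\`a--Ascoli theorem yields a locally uniformly convergent subsequence out of any sequence in $\frak{R}_{Q}$.

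For \emph{closedness} I would take $f_{m}\in\frak{R}_{Q}$ with $f_{m}\to f$ locally uniformly. By Theorem~\ref{th6.6.1A}, $f$ is either a constant in $\overline{{\Bbb R}^{n}}$ or a homeomorphism of $D$ into ${\Bbb R}^{n}$ (the inclusion $f(D)\subset{\Bbb R}^{n}$ also being covered by Remark~\ref{rmkCOM3.1}). But $f(x_{1})=\lim_{m}f_{m}(x_{1})=y_{1}\ne y_{2}=\lim_{m}f_{m}(x_{2})=f(x_{2})$, so $f$ cannot be constant and is therefore a homeomorphism into ${\Bbb R}^{n}$. By Theorem~\ref{thSTR4.2b} the limit $f$ is again a ring $Q$--homeomorphism, and it still satisfies $f(x_{1})=y_{1}$, $f(x_{2})=y_{2}$; hence $f\in\frak{R}_{Q}$, i.e.\ $\frak{R}_{Q}$ is closed. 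Together with normality this gives compactness of $\frak{R}_{Q}$, and hence Theorem~\ref{th6.6.1}, Corollary~\ref{cor6.6.2} and Corollary~\ref{cor6.6.4}.

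The step I expect to be the main obstacle is the one inside ``normality'': passing from the equicontinuity estimate of Lemma~\ref{lem7.3}, which is stated for the class $\frak{F}_{Q,\Delta}$ whose members omit a set of spherical diameter at least a fixed $\Delta>0$, to the two--point normalized class $\frak{R}_{Q}$. Here one must show that the three constraints carried by each $f\in\frak{R}_{Q}$ --- the two fixed distinct interior values $y_{1}\ne y_{2}$ together with the omitted value $\infty$ --- already force a uniform Teichm\"{u}ller--type modulus bound near every point of $D$, so that the exponential estimate of Lemma~\ref{lem7.3} applies uniformly across the whole class. Once this is in place everything else is a routine assembly of Lemma~\ref{lem6}, Lemma~\ref{lem7.3}, Theorem~\ref{th6.6.1A} and Theorem~\ref{thSTR4.2b}.
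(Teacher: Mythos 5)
Your proof is correct and takes essentially the paper's route: a Lebesgue point at every $x_0\in D$ gives $\overline{\lim\limits_{\varepsilon\rightarrow 0}}\ \dashint_{B(x_0,\varepsilon)}Q(x)\,dm(x)<\infty$, hence $Q\in$ FMO by Corollary \ref{cor5}, so the statement is just a special case of Theorem \ref{th6.6.1} (equivalently of Corollary \ref{cor6.6.2}), which the paper likewise assembles from normality (Lemma \ref{lem6} feeding Lemma \ref{lem7.3}) and closedness (Theorem \ref{th6.6.1A} together with Theorem \ref{thSTR4.2b}). The obstacle you single out --- transferring the equicontinuity of ${\frak F}_{Q,\Delta}$ to the two--point normalized class $\frak{R}_{Q}$ --- is exactly the point the paper also leaves implicit (Remark \ref{remark6.1} defers it to the earlier papers cited there); the standard fix is to apply Lemma \ref{lem7.3} separately on $D\setminus\{x_1\}$ and $D\setminus\{x_2\}$, where the complement of the image contains $\{y_i,\infty\}$ and hence has spherical diameter at least $h(y_i,\infty)>0$.
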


\medskip
\begin{theorem}{}\label{th6.6.5}{\sl\, Let
$Q$ satisfy the condition
$$
\int\limits_{0}^{\varepsilon(x_0)}\frac{dr}{rq_{x_0}^{\frac{1}{n-1}}(r)}=
\infty\qquad\forall\, x_0\in D$$ for some $\varepsilon(x_0)< {\rm
dist}\, (x_0,
\partial D)$ where $q_{x_0}(r)$ denotes the average of
$Q(x)$ over the sphere $|x-x_0|=r.$ Then the class $\frak{R}_{Q}$ is
compact.}
\end{theorem}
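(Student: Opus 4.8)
The plan is to show that $\frak{R}_{Q}$ is closed and normal, which is precisely what compactness means here. Closedness is the easy half, and I would read it off from the convergence results already proved. Let $f_m\in\frak{R}_{Q}$ converge locally uniformly to a mapping $f$. Since $Q$ satisfies the divergence condition (\ref{eq6.6.6}), Theorem \ref{th6.6.5A} gives that $f$ is either a constant in $\overline{{\Bbb R}^n}$ or a homeomorphism of $D$ into ${\Bbb R}^n$; the first alternative is impossible because $f(x_1)=\lim_{m\to\infty}f_m(x_1)=y_1\ne y_2=\lim_{m\to\infty}f_m(x_2)=f(x_2)$. So $f$ is a homeomorphism, by Theorem \ref{thSTR4.2b} it is a ring $Q$-homeomorphism, and since $f(x_1)=y_1$ and $f(x_2)=y_2$ we conclude $f\in\frak{R}_{Q}$.

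For normality the plan is to prove that $\frak{R}_{Q}$ is equicontinuous at every $x_0\in D$ with respect to the spherical metric and then to apply the Arzel\`{a}--Ascoli theorem, $\overline{{\Bbb R}^n}$ being compact. Fix $x_0$ and pick $\varepsilon_0<{\rm dist}\,(x_0,\partial D)$ so small that $\overline{B(x_0,\varepsilon_0)}$ contains at most one of the points $x_1,x_2$; let $x_*$ be one of $x_1,x_2$ lying outside $\overline{B(x_0,\varepsilon_0)}$, and $y_*\in\{y_1,y_2\}$ its fixed image. Taking $\psi(t)=1/[t\,q_{x_0}^{\frac{1}{n-1}}(t)]$, I would repeat the computation from the proof of Theorem \ref{th6.6.5A}: by the divergence condition $0<I(\varepsilon,\varepsilon_0):=\int_{\varepsilon}^{\varepsilon_0}\psi(t)\,dt<\infty$ for small $\varepsilon$ and $I(\varepsilon,\varepsilon_0)\to\infty$ as $\varepsilon\to0$, while the normalized function $\rho_\varepsilon=\psi(|x-x_0|)/I(\varepsilon,\varepsilon_0)$ supported in $A(x_0,\varepsilon,\varepsilon_0)$ is admissible for $\Gamma_\varepsilon=\Gamma(S(x_0,\varepsilon),S(x_0,\varepsilon_0),A(x_0,\varepsilon,\varepsilon_0))$. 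Hence, by the definition of a ring $Q$-homeomorphism and the identity $\int_{\varepsilon<|x-x_0|<\varepsilon_0}Q(x)\psi^n(|x-x_0|)\,dm(x)=\omega_{n-1}\,I(\varepsilon,\varepsilon_0)$,
$$
M\bigl(f(\Gamma_\varepsilon)\bigr)\ \le\ \frac{\omega_{n-1}}{I^{n-1}(\varepsilon,\varepsilon_0)}\ =:\ \sigma(\varepsilon)\ \longrightarrow\ 0\qquad(\varepsilon\to0),
$$
and the bound $\sigma(\varepsilon)$ is independent of $f\in\frak{R}_{Q}$.

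The step where the two-point normalization is genuinely used, and the one I expect to be the main obstacle, is to turn this uniform modulus estimate into a uniform bound on the spherical oscillation of $f$ near $x_0$. For $f\in\frak{R}_{Q}$ put $K_1=f(\overline{B(x_0,\varepsilon)})$ and $K_2=\overline{{\Bbb R}^n}\setminus f(B(x_0,\varepsilon_0))$, two disjoint continua in $\overline{{\Bbb R}^n}$; an overflowing argument for moduli (see e.g.\ \cite{Va}) gives $M(\Gamma(K_1,K_2))\le M(f(\Gamma_\varepsilon))\le\sigma(\varepsilon)$. Now $\infty\in K_2$ since $f(D)\subset{\Bbb R}^n$, and $y_*=f(x_*)\in K_2$ since $x_*\notin B(x_0,\varepsilon_0)$; therefore $h(K_2)\ge h(\infty,y_*)=(1+|y_*|^2)^{-1/2}=:\Delta_0>0$, a constant depending only on $y_1,y_2$. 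Then a standard distortion estimate for ring domains of small conformal capacity one of whose complementary continua has spherical diameter bounded below yields $h(K_1)=h\bigl(f(\overline{B(x_0,\varepsilon)})\bigr)\le\Psi_n\bigl(\sigma(\varepsilon),\Delta_0\bigr)$ with $\Psi_n(\sigma,\Delta_0)\to0$ as $\sigma\to0$. This estimate is the analogue, with $\Delta$ replaced by $\Delta_0$, of the inequality (\ref{eq7.6}) from the proof of Lemma \ref{lem7.3}, and rests on the Teichm\"{u}ller-ring bound used in the proof of Lemma \ref{lem7.1} (see also \cite{Vu}). Since $\sigma(\varepsilon)\to0$ uniformly in $f$, the family $\frak{R}_{Q}$ is equicontinuous at $x_0$; as $x_0$ was arbitrary, $\frak{R}_{Q}$ is normal, and being closed as well it is compact.

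Let me emphasise why Lemma \ref{lem7.3} cannot be quoted directly: a ring $Q$-homeomorphism may well satisfy $f(D)={\Bbb R}^n$, in which case $h(\overline{{\Bbb R}^n}\setminus f(D))=0$ and $\frak{R}_{Q}$ is contained in no class $\frak{F}_{Q,\Delta}$. The normalization $f(x_1)=y_1$, $f(x_2)=y_2$ provides exactly the replacement for the hypothesis $h(\overline{{\Bbb R}^n}\setminus f(D))\ge\Delta$ of Lemma \ref{lem7.3}, since it forces the ``far'' complementary continuum $K_2$ of every small spherical ring image to have spherical diameter at least $\Delta_0$. Consequently the points that need to be verified with care are the overflowing inequality $M(\Gamma(K_1,K_2))\le M(f(\Gamma_\varepsilon))$ and the uniform passage from smallness of the capacity to smallness of $h(K_1)$; the uniform smallness of $\sigma(\varepsilon)$ itself is nothing but the computation already carried out in the proofs of Theorem \ref{th6.6.5A} and Lemma \ref{lem7.3}.
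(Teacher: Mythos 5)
Your proof is correct, and it implements the route the paper only sketches (``combining the above results on normality and closeness''): closedness exactly as intended, via Theorem \ref{th6.6.5A} (the constant alternative being excluded by $y_1\ne y_2$) together with Theorem \ref{thSTR4.2b}, and normality via the uniform modulus estimate with $\psi(t)=1/[t\,q_{x_0}^{1/(n-1)}(t)]$. Where you genuinely depart from the paper is the normality half: the paper's Lemma \ref{lem7.3} concerns the classes $\frak{F}_{Q,\Delta}$ with $h\left(\overline{{\Bbb R}^n}\setminus f(D)\right)\ge\Delta$, and you rightly observe that $\frak{R}_Q$ need not lie in any such class (e.g. when $f(D)={\Bbb R}^n$), so you reprove equicontinuity from scratch: the far continuum $K_2=\overline{{\Bbb R}^n}\setminus f(B(x_0,\varepsilon_0))$ contains $\infty$ and one fixed point $y_*$, hence $h(K_2)\ge\Delta_0>0$ uniformly, and the capacity--diameter estimate underlying (\ref{eq7.6}) then forces $h\bigl(f(\overline{B(x_0,\varepsilon)})\bigr)$ to be uniformly small. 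This is sound (the overflowing step and the continuum property of $K_2$ via Jordan--Brouwer both check out, and the needed distortion inequality is exactly Lemma 7.5 of \cite{MRSY}, which the paper itself invokes), and it makes explicit where the two-point normalization enters. The alternative, closer to the paper's intended ``combination,'' is a one-line reduction rather than a new estimate: restrict each $f\in\frak{R}_Q$ to the domain $D\setminus\{x_2\}$ (still a domain for $n\ge 2$, and the restriction is still a ring $Q$-homeomorphism there), whose image omits both $y_2$ and $\infty$, so the restricted family lies in $\frak{F}_{Q,\Delta_0}$ over $D\setminus\{x_2\}$ and Lemma \ref{lem7.3} applies; doing the same with $x_1$ gives equicontinuity on all of $D$. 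Your direct argument costs a little more writing but is self-contained; the restriction trick buys a literal citation of Lemma \ref{lem7.3}. Minor points to polish in your version: replace $\psi$ by a Borel representative (Lusin) before the admissibility claim, and note that $\varepsilon_0$ should also be taken $\le\varepsilon(x_0)$ so that (\ref{eq6.6.6}) together with the finiteness argument from the proof of Theorem \ref{th6.6.5A} yields $I(\varepsilon,\varepsilon_0)\to\infty$.
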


\medskip
\begin{corollary}\label{cor6.6.7}{\sl\, The class $\frak{R}_{Q}$
is compact if $Q(x)$ has singularities only of the logarithmic type
of the order which is not more than $n-1$ at every point $x_0\in
D$.}
\end{corollary}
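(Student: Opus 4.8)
The plan is to reduce the statement to Theorem \ref{th6.6.5} by verifying that the stated hypothesis on the singularities of $Q$ forces the divergence of the integral $\int_0^{\varepsilon(x_0)} dr/(r\,q_{x_0}^{1/(n-1)}(r))$ at every point $x_0\in D$. Here the informal phrase ``$Q$ has singularities only of the logarithmic type of order not more than $n-1$ at $x_0$'' is understood to mean that there exist a radius $\varepsilon(x_0)<{\rm dist}\,(x_0,\partial D)$ and a constant $C=C(x_0)<\infty$ with
$$Q(x)\ \le\ C\left(\log\frac{1}{|x-x_0|}\right)^{n-1}\qquad\text{for a.e. }x\in B(x_0,\varepsilon(x_0))\setminus\{x_0\}\,,$$
where, after shrinking $\varepsilon(x_0)$ if necessary, we may assume $\log\frac{1}{|x-x_0|}\ge 1$ on that ball.

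First I would pass to spherical averages: integrating the pointwise bound over the sphere $|x-x_0|=r$ yields $q_{x_0}(r)\le C\left(\log\frac1r\right)^{n-1}$ for a.e. $r\in(0,\varepsilon(x_0))$, that is, $q_{x_0}(r)=O\!\left(\log^{n-1}\frac1r\right)$ as $r\to 0$. This is exactly the hypothesis of Corollary \ref{cor4.1}. Combined with the elementary estimate
$$\int\limits_0^{\varepsilon(x_0)}\frac{dr}{r\,q_{x_0}^{1/(n-1)}(r)}\ \ge\ \frac{1}{C^{1/(n-1)}}\int\limits_0^{\varepsilon(x_0)}\frac{dr}{r\,\log\frac1r}\ =\ \infty\,,$$
it shows that condition (\ref{eq6.6.6}) in Theorem \ref{th6.6.5} holds at every $x_0\in D$.

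It then remains only to invoke Theorem \ref{th6.6.5}: since the divergence condition is satisfied for $Q$ at each point of $D$, the normalized class $\frak{R}_Q$ is compact. Equivalently, one may assemble the conclusion directly from its two ingredients --- normality, via Lemma \ref{lem7.3} with the choice $\psi(t)=1/[t\,q_{x_0}^{1/(n-1)}(t)]$ exploited in the proof of Theorem \ref{th6.6.5A}, and closedness, furnished by Theorem \ref{thSTR4.2b} together with Theorem \ref{pr1}.

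The only point genuinely requiring care --- and hence the ``main obstacle,'' modest as it is --- is to fix the precise meaning of the informal phrase ``singularities only of the logarithmic type of the order which is not more than $n-1$''; once it is rendered as the pointwise bound above (or, what suffices, as the spherical-average bound $q_{x_0}(r)=O(\log^{n-1}\frac1r)$), the corollary is an immediate specialization of Theorem \ref{th6.6.5} through Corollary \ref{cor4.1}, with no further estimates needed.
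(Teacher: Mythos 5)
Your proposal is correct and follows essentially the same route as the paper: the informal logarithmic-singularity hypothesis is read as $q_{x_0}(r)=O\bigl(\log^{n-1}\frac{1}{r}\bigr)$, which makes the integral in (\ref{eq6.6.6}) divergent, so the corollary is a direct specialization of Theorem \ref{th6.6.5} (compactness then coming, as in the paper, from normality via Lemma \ref{lem7.3} and closedness via Theorem \ref{thSTR4.2b} with the normalization excluding constant limits). This mirrors exactly the chain Corollary \ref{cor4.1}--Corollary \ref{cor6.6.7A} used in the convergence section, so no further comment is needed.
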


\medskip
\begin{theorem}\label{th4.1A}{\sl\, The class\,
$\frak{R}_{Q}$ is compact if
$$\int\limits_{\varepsilon<|x-x_0|<\varepsilon_0}\frac{Q(x)}{|x-x_0|^n}\,dm(x)=
o\left(\log^n\frac{1}{\varepsilon}\right)\qquad \forall\,x_0\in D$$
as $\varepsilon\rightarrow 0$ for some
$\varepsilon_0=\varepsilon(x_0)<{\rm dist\,}(x_0, \partial D).$}
\end{theorem}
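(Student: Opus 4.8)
The plan is to establish the two halves of compactness separately: that $\frak{R}_{Q}$ is a normal family, and that it is closed under locally uniform convergence. Normality will follow from an equicontinuity estimate combined with the Arzel\`a--Ascoli theorem (the target $\overline{{\Bbb R}^n}$ being compact in the spherical metric), and closedness will be assembled from Theorems \ref{th4.1} and \ref{thSTR4.2b}.

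For equicontinuity I would repeat the argument of Lemma \ref{lem7.3}, with one adjustment. One cannot simply say $\frak{R}_{Q}\subset\frak{F}_{Q,\Delta}$, because a member of $\frak{R}_{Q}$ may have $f(D)={\Bbb R}^n$, so that $h(\overline{{\Bbb R}^n}\setminus f(D))=0$; what replaces the role of $\Delta$ is the fact that every $f\in\frak{R}_{Q}$ omits $\infty$ and fixes the two distinct points $y_1,y_2$. Fix $x_0\in D$ and choose $\varepsilon_0<{\rm dist}(x_0,\partial D)$ small enough that one of $x_1,x_2$, say $x_1$ (take $x_2$ if $x_0=x_1$), lies outside $\overline{B(x_0,\varepsilon_0)}$. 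Then for $0<\varepsilon<\varepsilon_0$ the image $f(A(x_0,\varepsilon,\varepsilon_0))$ is a ring whose outer complementary component contains both $\infty$ (since $f(D)\subset{\Bbb R}^n$) and $y_1=f(x_1)$, hence has spherical diameter at least $h(y_1,\infty)=(1+|y_1|^2)^{-1/2}>0$, a bound independent of $f\in\frak{R}_{Q}$. Running the chain (\ref{eq7.6})--(\ref{eq7.8}) of Lemma \ref{lem7.3} with this bound in place of $\Delta$, and taking $\psi(t)=1/t$ so that $I(\varepsilon,\varepsilon_0)=\log(\varepsilon_0/\varepsilon)$, the ring $Q$--homeomorphism inequality applied to the function $\rho_\varepsilon$ admissible for $\Gamma_\varepsilon=\Gamma(S(x_0,\varepsilon),S(x_0,\varepsilon_0),A(x_0,\varepsilon,\varepsilon_0))$ yields
$$M\bigl(f(\Gamma_\varepsilon)\bigr)\ \le\ \frac{1}{\log^n(\varepsilon_0/\varepsilon)}\int\limits_{\varepsilon<|x-x_0|<\varepsilon_0}\frac{Q(x)}{|x-x_0|^n}\,dm(x)\ \longrightarrow\ 0\qquad(\varepsilon\to0),$$
the limit being $0$ by the hypothesis of Theorem \ref{th4.1A} (which is (\ref{eq4.2}), and $\log^n(\varepsilon_0/\varepsilon)$ is comparable with $\log^n(1/\varepsilon)$). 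Hence $h(f(x),f(x_0))\to0$ as $|x-x_0|\to0$ uniformly over $f\in\frak{R}_{Q}$, i.e.\ $\frak{R}_{Q}$ is equicontinuous at $x_0$; since $x_0\in D$ was arbitrary, $\frak{R}_{Q}$ is normal.

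For closedness, let $f_m\in\frak{R}_{Q}$ converge locally uniformly to a mapping $f$. The hypothesis of Theorem \ref{th4.1A} being exactly that of Theorem \ref{th4.1}, the limit $f$ is either a constant in $\overline{{\Bbb R}^n}$ or a homeomorphism of $D$ into ${\Bbb R}^n$; it is not constant, since $f(x_1)=\lim_m f_m(x_1)=y_1\neq y_2=\lim_m f_m(x_2)=f(x_2)$. Thus $f$ is a homeomorphism of $D$ into ${\Bbb R}^n$ satisfying $f(x_1)=y_1$, $f(x_2)=y_2$. Finally, each $f_m$ is a ring $Q$--homeomorphism at every $x_0\in D$ and $f_m\to f$ locally uniformly with $f$ a homeomorphism, so Theorem \ref{thSTR4.2b}, applied at every $x_0\in D$, gives that $f$ is a ring $Q$--homeomorphism at $x_0$, hence in $D$. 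Therefore $f\in\frak{R}_{Q}$, so $\frak{R}_{Q}$ is closed, and being both normal and closed it is compact.

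The step I expect to be the main obstacle is the normality argument, and specifically the point noted above: one cannot quote Lemma \ref{lem7.3} or Remark \ref{remark6.1} verbatim because $\frak{R}_{Q}$ is not contained in any $\frak{F}_{Q,\Delta}$, so the uniform lower bound on the spherical diameter of the outer complementary component of the image rings has to be extracted from the pair $\{y_i,\infty\}$ rather than from $\overline{{\Bbb R}^n}\setminus f(D)$. Once that observation is made, the modulus computation with $\psi(t)=1/t$ and the invocations of Theorems \ref{th4.1} and \ref{thSTR4.2b} are routine.
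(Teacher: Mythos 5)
Your proposal is correct and follows essentially the route the paper intends: compactness is split into normality and closedness, with closedness obtained from Theorem \ref{th4.1} (the constant case being excluded by $f(x_1)=y_1\ne y_2=f(x_2)$) together with Theorem \ref{thSTR4.2b} applied at every point, which is exactly what the paper means by ``combining the above results on normality and closeness.'' Your additional observation --- that $\frak{R}_{Q}$ need not be contained in any $\frak{F}_{Q,\Delta}$, so in the equicontinuity chain of Lemma \ref{lem7.3} the lower bound on the spherical diameter of the outer complementary continuum must be extracted from $\infty$ together with one of the prescribed image points $y_1,y_2$ (or, equivalently, by applying Lemma \ref{lem7.3} on $D\setminus\{x_i\}$) --- is a correct patch of a step the paper leaves implicit, not a departure from its method.
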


\medskip
\begin{theorem}{}\label{th4.2A}{\sl\,The class\,
$\frak{R}_{Q}$ is compact if
\begin{equation}\label{eq7.7A}
\int\limits_D\Phi\left(Q(x)\right)\frac{dm(x)}{\left(1+|x|^2\right)^n}\
\le\ M\ <\ \infty
\end{equation}
for a nondecreasing convex function $\Phi:[0, \infty]\rightarrow [0,
\infty]$ such that
\begin{equation}\label{eq7.77}
\int\limits_{\delta}^{\infty}
\frac{d\tau}{\tau\left[\Phi^{-1}(\tau)\right]^{\frac{1}{n-1}}}\ =\
\infty
\end{equation}
for some $\delta>\Phi(0).$}
\end{theorem}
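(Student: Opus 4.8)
The plan is to reduce Theorem~\ref{th4.2A} to the already--established results on normality and closeness, exactly as was done in the preceding compactness theorems of this section. First I would observe that the class $\frak{R}_{Q}$ consists of ring $Q$--homeomorphisms of $D$ into ${\Bbb R}^n$ subject to the two--point normalization $f(x_1)=y_1$, $f(x_2)=y_2$, and that compactness means precisely \emph{normality} plus \emph{closedness}. So the proof splits into two parts, and each part will be a direct citation of an earlier statement.

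For normality, I would invoke Lemma~\ref{lem7.3} (together with Remark~\ref{remark6.1}): the hypotheses (\ref{eq7.7A})--(\ref{eq7.77}) are exactly the hypotheses of Theorem~\ref{th4.2}, and as noted in Remark~\ref{remark6.1} the condition on $Q$ in Theorem~\ref{th4.2} implies the conditions (\ref{eq6.3.25A})--(\ref{eq6.3.24}) needed to run Lemma~\ref{lem7.3}. Concretely, by Theorem~3.1 in \cite{RS$_2$} the bounds (\ref{eq7.7A})--(\ref{eq7.77}) force the divergence of $\int_0^{\varepsilon(x_0)} dr/(r q_{x_0}^{1/(n-1)}(r))$ for every $x_0\in D$, hence one can take $\psi(t)=1/[t q_{x_0}^{1/(n-1)}(t)]$ as in the proof of Theorem~\ref{th6.6.5A} and verify (\ref{eq6.3.24}). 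Thus the family of all ring $Q$--homeomorphisms $f$ of $D$ into $\overline{{\Bbb R}^n}$ with $h(\overline{{\Bbb R}^n}\setminus f(D))\ge\Delta$ is equicontinuous; since $\frak{R}_{Q}$ consists of maps into ${\Bbb R}^n$ omitting, say, a neighborhood of $\infty$ whose spherical size is controlled (in fact $f(x_1)=y_1$, $f(x_2)=y_2$ are fixed, so $\infty\notin f(D)$ and one gets a uniform lower bound $\Delta>0$ on $h(\overline{{\Bbb R}^n}\setminus f(D))$), the family is uniformly bounded away from $\infty$ and equicontinuous, and by the Arzel\`a--Ascoli theorem every sequence has a locally uniformly convergent subsequence.

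For closedness, suppose $f_m\in\frak{R}_{Q}$ converges locally uniformly to a map $f$. By Theorem~\ref{th4.2}, $f$ is either a constant in $\overline{{\Bbb R}^n}$ or a homeomorphism of $D$ into ${\Bbb R}^n$. The normalization rules out the constant case: $f(x_1)=\lim f_m(x_1)=y_1\ne y_2=\lim f_m(x_2)=f(x_2)$, so $f$ cannot be constant; hence $f$ is a homeomorphism into ${\Bbb R}^n$ fixing $x_1\mapsto y_1$, $x_2\mapsto y_2$. Finally, by Theorem~\ref{thSTR4.2b} the limit homeomorphism $f$ is again a ring $Q$--homeomorphism at every $x_0\in D$, so $f\in\frak{R}_{Q}$. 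This proves that $\frak{R}_{Q}$ is closed, and together with normality, compact.

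I do not expect any serious obstacle here: the theorem is a corollary of the machinery already built in Sections~4--7, and the only thing to be careful about is the bookkeeping that glues Lemma~\ref{lem7.3}, Theorem~\ref{th4.2}, and Theorem~\ref{thSTR4.2b} together — in particular checking that the two--point normalization indeed both supplies the uniform gap $\Delta>0$ needed for equicontinuity and excludes the degenerate constant limit. If anything is mildly delicate it is verifying that $f(D)\subset{\Bbb R}^n$ and that $h(\overline{{\Bbb R}^n}\setminus f(D))$ stays bounded below along the sequence, which follows from Remark~\ref{rmkCOM3.1} and the fact that all $f_m(D)\subset{\Bbb R}^n$ omit $\infty$ while the images contain the two fixed finite points $y_1,y_2$.
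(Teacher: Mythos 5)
Your overall decomposition (normality plus closedness, glued together by Theorem \ref{th4.2} and Theorem \ref{thSTR4.2b}) is exactly what the paper intends, and your closedness half is correct: the two-point normalization excludes constant limits, Theorem \ref{th4.2} makes the limit a homeomorphism into ${\Bbb R}^n$, and Theorem \ref{thSTR4.2b} shows it is again a ring $Q$--homeomorphism. The genuine gap is in the normality half. You claim that the conditions $f(x_1)=y_1$, $f(x_2)=y_2$ give a uniform $\Delta>0$ with $h\left(\overline{{\Bbb R}^n}\setminus f(D)\right)\ge\Delta$, so that $\frak{R}_{Q}\subset{\frak F}_{Q,\Delta}$ and Lemma \ref{lem7.3} applies. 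This is false: the points $y_1,y_2$ lie in the image, not in the complement, and the only point of $\overline{{\Bbb R}^n}\setminus f(D)$ you actually exhibit is $\infty$, a single point of chordal diameter zero. A map in $\frak{R}_{Q}$ may send $D$ onto an arbitrarily large ball, or even onto all of ${\Bbb R}^n$ (in which case the complement is exactly $\{\infty\}$), while still satisfying the normalization; so the maps do not omit any fixed neighborhood of $\infty$, no uniform $\Delta$ exists, and the appeal to Lemma \ref{lem7.3} and Arzel\`a--Ascoli is unsupported as written.

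The standard repair --- and in effect what the normality results cited in Remark \ref{remark6.1} from \cite{RS$_1$}, \cite{RS$_2$} rely on --- is to pass to the punctured domains. For $f\in\frak{R}_{Q}$ the restriction $f|_{D\setminus\{x_2\}}$ is again a ring $Q$--homeomorphism at every point of $D\setminus\{x_2\}$, and its image omits both $y_2$ and $\infty$, so that $h\left(\overline{{\Bbb R}^n}\setminus f(D\setminus\{x_2\})\right)\ \ge\ h(y_2,\infty)\ >\ 0$, a bound independent of $f$; similarly on $D\setminus\{x_1\}$ with the bound $h(y_1,\infty)$. Lemma \ref{lem7.3} (with the function $\psi$ you correctly produce from (\ref{eq7.7A})--(\ref{eq7.77}) via Theorem 3.1 in \cite{RS$_2$} and the proof of Theorem \ref{th6.6.5A}) then gives equicontinuity on each punctured domain, and a diagonal subsequence converging locally uniformly on both of them converges locally uniformly on $D=(D\setminus\{x_1\})\cup(D\setminus\{x_2\})$; note that in the spherical metric no separate ``boundedness away from $\infty$'' is needed for Arzel\`a--Ascoli, since $\overline{{\Bbb R}^n}$ is compact. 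With this substitution the remainder of your argument goes through and coincides with the paper's proof.
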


\medskip

\begin{corollary}\label{cor6.6.7C}{\sl\,
In particular, the conclusion of Theorem \ref{th4.2A} is valid if,
for some $\alpha > 0$,
\begin{equation}\label{eq2!!!2c}
\int\limits_D e^{\alpha Q^{\frac{1}{n-1}}(x)
}\frac{dm(x)}{\left(1+|x|^2\right)^n}\ \le\ M<\infty \ .
\end{equation}
The same is true for any function $\Phi=e^{\psi}$ where $\psi(t)$ is
a finite product of the function $\alpha t^{\beta}$, $\alpha >0$,
$\beta \ge 1/(n-1)$, and some of the functions
$[\log(A_1+t)]^{\alpha_1}$, $[\log\log(A_2+t)]^{\alpha_2},\ \ldots \
$, $\alpha_m\ge -1$, $A_m\in{\mathbb R }$, $m\in{\mathbb N}$, $t\in
[T,\infty ]$, $\psi(t)\equiv\psi(T)$,  $t\in [0,T]$ with a large
enough $T\in(0,\infty)$.}
\end{corollary}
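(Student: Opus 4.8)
The plan is to derive this directly from Theorem \ref{th4.2A}, using Remarks \ref{remark4.1} and \ref{remark4.3} (whose content applies here by Remark \ref{remark6.1}): it suffices to produce, from the given integral bound on $Q$, a nondecreasing convex function $\Phi$ — or a function minorising such a $\Phi$ on a tail — that satisfies both $(\ref{eq7.7A})$ and $(\ref{eq7.77})$, since then Theorem \ref{th4.2A} yields compactness of $\frak{R}_{Q}$ at once. Everything else (normality, the closedness supplied by Theorem \ref{thSTR4.2b}, and the exclusion of a constant limit by the normalization $f(x_1)=y_1\neq y_2=f(x_2)$) is already packaged inside the conclusion of Theorem \ref{th4.2A}.

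First I would dispose of $(\ref{eq2!!!2c})$ by taking $\psi(t)=\alpha t$, so that $e^{\psi(Q^{1/(n-1)}(x))}=e^{\alpha Q^{1/(n-1)}(x)}$ and the hypothesis becomes precisely condition $(\ref{eq2!!!2A})$ of Remark \ref{remark4.3}. This $\psi$ is nondecreasing and convex, and $\int_{\Delta}^{\infty}\psi(t)\,dt/t^{2}=\alpha\int_{\Delta}^{\infty}dt/t=\infty$, so $(\ref{eq3!!!3})$ holds as well; by the equivalences of Remark \ref{remark4.3} this returns a nondecreasing convex $\Phi$ for which $(\ref{eq7.7A})$ and $(\ref{eq7.77})$ hold, and Theorem \ref{th4.2A} applies. (Alternatively one may work with $\Phi(t)=e^{\alpha t^{1/(n-1)}}$ directly: a short computation gives $\Phi''(t)>0$ for $t\ge((n-2)/\alpha)^{n-1}$, so Remark \ref{remark4.1} is applicable, while $[\Phi^{-1}(\tau)]^{1/(n-1)}=\alpha^{-1}\log\tau$ makes the left side of $(\ref{eq7.77})$ equal to $\alpha\int_{\delta}^{\infty}d\tau/(\tau\log\tau)=\infty$.)

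For the general case $\Phi=e^{\psi}$, with $\psi(t)$ the prescribed finite product of $\alpha t^{\beta}$, $\beta\ge 1/(n-1)$, and iterated logarithms $[\log(A_{1}+t)]^{\alpha_{1}}$, $[\log\log(A_{2}+t)]^{\alpha_{2}},\ldots$ with $\alpha_{i}\ge-1$, I would exploit $\Phi=e^{\psi}\ge\psi^{m}/m!$ for every $m\in{\mathbb N}$. Choosing $m$ with $\beta m>1$ (say $m=n$), the leading factor $t^{\beta m}$ dominates in $(\psi^{m})''$, so $\psi^{m}/m!$ is nondecreasing and convex on some tail $[T,\infty]$ and minorises $\Phi$ there, which is exactly the configuration allowed by Remark \ref{remark4.1}. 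It then remains to verify $(\ref{eq!3!})$ for $\Psi=\psi^{m}/m!$, equivalently $(\ref{eq3!!!33})$ for $\psi^{m}$; and since $\psi^{m}(t)\ge\psi(t)$ for all large $t$, it suffices that $\int_{\Delta}^{\infty}\psi(t)\,dt/t^{n^{\prime}}=\infty$ with $1/n^{\prime}+1/n=1$. This last integral equals $\alpha\int_{\Delta}^{\infty}t^{\beta-n^{\prime}}\prod_{i}[\log_{i}(A_{i}+t)]^{\alpha_{i}}\,dt$; since $\beta-n^{\prime}\ge 1/(n-1)-n/(n-1)=-1$, it is patently infinite when $\beta-n^{\prime}>-1$, and in the borderline case $\beta=1/(n-1)$ it reduces to $\int^{\infty}t^{-1}\prod_{i}[\log_{i}(A_{i}+t)]^{\alpha_{i}}\,dt$, which diverges by the classical test for integrals of the form $\int dt/(t\log t\log\log t\cdots)$ once every $\alpha_{i}\ge-1$. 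Hence Theorem \ref{th4.2A} applies and $\frak{R}_{Q}$ is compact.

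The only delicate issues here are computational: verifying that raising the product defining $\psi$ to a sufficiently high power really restores convexity on a tail despite the possibly negative exponents on the iterated logarithms, and treating the extreme case $\beta=1/(n-1)$ with every $\alpha_{i}=-1$ simultaneously, where divergence is only logarithmic at each level. Both are routine and in any case already lie behind the estimates of Remarks \ref{remark4.1} and \ref{remark4.3}, so the corollary is essentially immediate from Theorem \ref{th4.2A}.
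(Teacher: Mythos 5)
Your first half is fine and follows the route the paper itself intends (there is no explicit proof in the paper; the corollary is meant to follow from Theorem \ref{th4.2A} together with Remarks \ref{remark4.1} and \ref{remark4.3}): for $(\ref{eq2!!!2c})$ the direct check with $\Phi(t)=e^{\alpha t^{1/(n-1)}}$ — tail convexity plus $[\Phi^{-1}(\tau)]^{1/(n-1)}=\alpha^{-1}\log\tau$, so $(\ref{eq7.77})$ reduces to $\int_\delta^\infty d\tau/(\tau\log\tau)=\infty$ — is complete and correct.

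In the general case, however, your structural step fails. You claim it "remains to verify $(\ref{eq!3!})$ for $\Psi=\psi^m/m!$, equivalently $(\ref{eq3!!!33})$ for $\psi^m$". That equivalence is not what the paper's own reduction gives: by Proposition 2.3 in \cite{RS$_2$}, as quoted in Remark \ref{remark4.1}, $(\ref{eq!3!})$ for $\Psi$ rewrites as $(\ref{eq3!!!33})$ for $\log\Psi=m\log\psi-\log m!$, not for $\psi^m$; and for the power–log functions at hand $\int^\infty\log\psi(t)\,dt/t^{n'}<\infty$. Indeed $(\ref{eq!3!})$ fails for \emph{every} $\Psi$ of polynomial growth, since then $[\Psi^{-1}(\tau)]^{1/(n-1)}$ grows like a positive power of $\tau$ and the integral converges; so the minorant $\psi^m/m!$ can never satisfy the hypotheses of the minorisation principle, and your chain of reductions breaks exactly there (you are echoing the loosely worded last sentence of Remark \ref{remark4.1}, but as written it cannot serve as a proof — note also that Remark \ref{remark4.1.k} says the divergence $(\ref{eq7.77})$, i.e. $(\ref{eqKR4.4})$ for $\log\Phi$, is necessary, so it cannot be relaxed to divergence for $\psi^m$). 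The repair uses precisely the computations you already made, just attached to the right function: for such $\psi$ the function $\Phi=e^{\psi}$ is itself convex on a tail, because $\Phi''=e^{\psi}\bigl((\psi')^2+\psi''\bigr)$ and $(\psi')^2\asymp t^{2\beta-2}$ (times slowly varying factors) dominates $|\psi''|\asymp t^{\beta-2}$ for large $t$; then the first paragraph of Remark \ref{remark4.1} replaces $\Phi$ by a nondecreasing convex minorant coinciding with $\Phi$ on a tail, so $(\ref{eq7.7A})$ persists, and $(\ref{eq7.77})$ for $\Phi$ is equivalent, via $(\ref{eqKR4.4})$, to $\int^\infty\psi(t)\,dt/t^{n'}=\infty$ — exactly the divergence you correctly verified from $\beta-n'\ge-1$ and $\alpha_i\ge-1$. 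With that substitution your argument becomes a correct proof along the paper's intended lines; the detour through $\psi^m/m!$ should be dropped.
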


\medskip

\begin{remark}\label{remark4.1.k}
Note that the condition (\ref{eq7.77}) is not only sufficient but
also necessary for the compactness of the classes $\frak{R}_{Q}$
with integral constraints of the type (\ref{eq7.7A}) on $Q$, see
Theorem 5.1 in \cite{RS$_2$}, and (\ref{eq7.77}) is equivalent to
the following condition
\begin{equation}\label{eqKR4.4}
\int\limits_{\Delta}^{\infty}\log\,\Phi(t)\,\frac{dt}{t^{n'}}=+\infty\end{equation}
for all $\Delta>t_0$ where $t_0:=\sup\limits_{\Phi(t)=0}t,$ $t_0=0$
if $\Phi(0)>0,$ and where $\frac{1}{n'}+\frac{1}{n}=1$, i.e., $n'=2$
for $n=2$, $n'$ is strictly decreasing in $n$ and $n'=n/(n-1)\to1$
as $n\to\infty$, see Remark 4.2 in \cite{RS$_2$}.

\bigskip

Finally, all the notes in Remarks \ref{remark4.1}, \ref{remark4.3}
and \ref{remark6.1} above are also related to the compactness
results in this section.
\end{remark}

\bigskip

These results will have, in particular, wide applications to the
convergence and compactness theory for the Sobolev homeomorphisms as
well as for the Orlicz--Sobolev homeomorphisms, see e.g.
\cite{KRSS}, that will be published el\-se\-whe\-re.

\medskip
\noindent
{\bf Vladimir Ryazanov and Evgeny Sevost'yanov:}\\
Institute of Applied Mathematics and Mechanics,\\
National Academy of Sciences of Ukraine,\\ 74 Roze Luxemburg Str.,
Donetsk,\\ 83114, UKRAINE\\
e--mails: vlryazanov1@rambler.ru; \\
brusin2006@rambler.ru

\end{document}